\documentclass[twoside,11pt, preprint]{article}
 
 \usepackage{jmlr2e}
\RequirePackage{amsmath,amsfonts}
\usepackage{mathtools}
\usepackage{bbm}
\usepackage{booktabs}
\usepackage{tikz}
\usepackage{algorithm}
\usepackage{algpseudocode}
\usepackage{graphicx}

\newtheorem{defn}{Definition}
\newtheorem{thm}{Theorem}
\newtheorem{lem}{Lemma}
\newtheorem{assumption}{Assumption}
\newcommand{\eps}{\epsilon}

\newcommand{\ntmin}{N_t^{\text{min}}}
\newcommand{\ntmax}{N_t^{\text{max}}}

\newcommand{\fbb}{{f_{\boldsymbol{\beta}}}}
\newcommand{\fnb}{{f_{-\boldsymbol{\beta}}}}
\newcommand{\tfnb}{{\tilde{f}_{-\boldsymbol{\beta}}}}
\newcommand{\fbbp}{f_{\boldsymbol{\beta}'}}
\newcommand{\bbeta}{\boldsymbol{\beta}}
\newcommand{\Nbass}{N_{\text{bass}}}
\newtheorem{prop}{Proposition}


\jmlrheading{1}{2022}{1-48}{4/22}{4/23}{meila00a}{Azeem Zaman and Botond Szab\'o}


\ShortHeadings{Distributed nonparametric estimation}{Zaman and Szab\'o}
\firstpageno{1}

\begin{document}
\title{Distributed Nonparametric Estimation under Communication Constraints}

\author{\name Azeem Zaman \email azaman@g.harvard.edu \\
       \addr Department of Statistics\\
       Harvard University\\
       Cambridge, MA 02138, USA
       \AND
       \name Botond Szab\'o \email botond.szabo@unibocconi.it \\
       \addr Department of Decision Sciences, Bocconi University and 
Bocconi Institute for Data Science and Analytics (BIDSA)
       \\
       Milan, Italy}

\editor{Francis Bach and David Blei}

\maketitle

\begin{abstract}%
    In the era of big data, it is necessary to split extremely large data sets across multiple computing nodes and construct estimators using the distributed data. When designing distributed estimators, it is desirable to minimize the amount of communication across the network because transmission between computers is slow in comparison to computations in a single computer. Our work provides a general framework for understanding the behavior of distributed estimation under communication constraints for nonparametric problems. We provide results for a broad class of models, moving beyond the Gaussian framework that dominates the literature. As concrete examples we derive minimax lower and matching upper bounds in the distributed regression, density estimation, classification, Poisson regression and volatility estimation models under communication constraints. To assist with this, we provide sufficient conditions that can be easily verified in all of our examples. 
\end{abstract}

\begin{keywords}
    distributed estimation, nonparametrics, minimax rate, communication constraint, wavelets
\end{keywords}





\section{Distributed Estimation}


As a consequence of the ever increasing amount of information being collected there has been substantial growth in distributed methods to handle data that is too large to store and analyze on a single machine. Distributed methods also arise naturally due to privacy considerations or scenarios when data are collected and processed at multiple locations before being summarized centrally. 

To statistically model a distributed estimation problem, we assume that we have $m$ machines or servers observing independent, identically distributed (iid) data. We will perform calculations on the local machines and transmit the results to a central machine or server for aggregation. In theory any type of information transmission is allowed, but in practice we typically transmit a local estimate (based on the local data). The local estimates are converted to a binary string and transmitted to a central machine that may aggregate these estimates in any manner. In this paper we focus on procedures that use only a single round of communication: the local machines transmit to the central machine only once and a final estimate is constructed based only on this single set of transmissions.

Communicating data across a network is orders of magnitude slower than working with the same data on a machine, forming the computational bottleneck of the problem. This means that methods communicating large amounts of data may be too slow to be practically useful. In addition, excess communication can cause congestion on the computing network and inconvenience other users.  Therefore we introduce communication constraints in our network  to minimize the run time of the algorithm. In the absence of such communication constraints, one may transfer the whole data set to the central machine and the distributed framework imposes no meaningful restriction. We assume that each machine from $k = 1,\ldots, m$ has a communication budget $B^{(k)}$, which is a limit on the length of the binary string that can be transmitted to the central machine. 
Let $\mathcal{F}_{\text{dist}}(\boldsymbol{B})$, where $\boldsymbol{B} = (B^{(1)}, \ldots, B^{(m)})$, denote set of estimators that can be constructed with a single round of communication subject to the budget constraint. Our goal is to determine the minimax rate for estimators in $\mathcal{F}_{\text{dist}}(\boldsymbol{B})$ in an abstract statistical framework and apply the derived results on various specific models.

The theoretical study of statistical estimation and testing under communication constraints has emerged only recently, with most of the work focusing on parametric models, see for instance \cite{zhang2013information,duchi_optimality_2014,shamir_fundamental_2014,braverman_communication_2016,xuraginsky2016,fan2019distributed,han_geometric_2020,
cai_distributed_2020,cai:distributed:adap:sigma} for estimation and \cite{9211418,pmlr-v125-acharya20b,szabo_optimal_2020,sz:vuursteen:zanten:2022} for testing. Nonparametric models, where the parameter of interest is infinite dimensional, have been studied in just the last few years in context of specific models, focusing mainly on Gaussian likelihood functions.  Distributed minimax estimation rates were derived in context of the Gaussian white noise \cite{pmlr-v80-zhu18a}, and nonparametric regression models \cite{szabo_adaptive_2019}. Adaptation to the unknown regularity parameter was investigated in \cite{szabo_adaptive_2019,szabo2020distributed,cai:2021:distributed}. Outside of the Gaussian framework only the closely related density estimation problem \cite{barnes_lower_2019} has been investigated so far. Thus up to now there is a lack of a general, abstract understanding of the limitations and guarantees of distributed techniques under communication constraints in nonparametric settings. 

In this paper we take on this task and derive minimax lower and matching upper bounds in abstract settings going beyond the Gaussian framework, requiring, roughly speaking, only appropriate tail bounds on the likelihood ratio over an appropriately selected finite sieve. We demonstrate the applicability of our results over a range of nonparametric models, including nonparametric regression, density estimation, binary and Poisson regressions and volatility estimation.

We organize the paper as follows: In Section \ref{sec:main} we formally introduce distributed methods and our abstract modeling assumptions under which minimax lower and matching upper bounds are derived in Sections \ref{sec:lower} and \ref{sec:upper}, respectively. In Section \ref{sec:suff-conds} we provide simple sufficient conditions that we can be used to apply our general theorems to the examples presented in Section \ref{sec:examples}. We summarize our results and discuss further research questions in Section \ref{sec:discuss}. The proofs for the minimax lower and upper bounds are presented in Section \ref{sec:proofs}, while in Section \ref{sec:proof-sufficient} we demonstrate the validity of our sufficient conditions. The proofs of the examples are given in Section \ref{sec:proof:examples}, while auxiliary lemmas and their proofs are deferred to the Supplement.

\subsection{Notation}
For two positive sequences $a_n$, $b_n$ we use the notation $a_n\lesssim b_n$ if
there exists an universal positive constant $C$ such that $a_n \leq  Cb_n$. Then we denote by $a_n\asymp b_n$ if both $a_n\gtrsim b_n$ and $b_n\lesssim a_n$ hold simultaneously. Furthermore, we write $a_n\ll b_n$ if $a_n/b_n=o(1)$. We denote by $\lceil b\rceil$ and $\lfloor b \rfloor$ the upper and lower integer part of $b\in\mathbb{R}$, respectively. For a set $I$ we denote by $|I|$ the cardinality of $I$. Throughout the paper, $c$ and $C$ denote global constants whose value may change from one line to another.

\section{Main results}\label{sec:main}

The goal of this paper is to derive theoretical limitations and guarantees for distributed methods under communication constraints in a general, abstract setting. We assume that there are $m$ machines and in each machine $k=1,...,m$ we observe a sample  $X^{(k)}=\{ X_{i}^{(k)}\}_{1 \leq i \leq n}$ from some unknown distribution $p_f$ indexed by an infinite dimensional functional parameter $f\in L_2([0,1])$ of interest. The total (global) sample size is denoted by $N=nm$. We assume, that the function $f$ belongs to a Besov (Sobolev) ball $f\in B_{2,2}^r(L)$ for some known $r,L>0$, see Section \ref{app: wavelets} in the Supplement for a brief summary of Besov spaces and wavelets.  As an example consider the distributed nonparametric regression model with observations $(T_i^{(k)},X_i^{(k)})$, $i=1,...,n$, $k=1,...,m$,  satisfying that $T_i^{(k)}\sim^{iid} U(0,1)$ and $X_i^{(k)}|T_i^{(k)}\sim^{iid}  \phi(x - f(T_i^{(k)}))$, studied also in \cite{szabo_adaptive_2019}.

We consider estimation procedures where each machine can transmit a binary message $Y^{(k)}$ with almost sure length constraint $l(Y^{(k)}) \leq B^{(k)}$ to the central machine where the information is aggregated to form a global estimator $\hat{f}_{n,m}=\hat{f}_{n,m}(Y^{(1)},...,Y^{(m)})$ depending only on the transmitted information; see display \eqref{fig: dist:est} below for a graphical representation.

\begin{equation} \label{fig: dist:est}
 \begin{matrix}
X^{(1)} &\put(0,5){\vector(1,0){20}}  &\qquad Y^{(1)}\quad&\put(-10,4){\vector(3,-1){20}} \\
\vdots &\put(0,5){\vector(1,0){20}}  &\qquad\vdots  \quad&\put(-10,5){\vector(1,0){20}} \\
X^{(m)} & \put(0,5){\vector(1,0){20}}  &\qquad Y^{(m)}\quad&\put(-10,5){\vector(3,1){20}}
\end{matrix}  \qquad
 \hat{f}_{n,m}.
\end{equation}

We derive minimax lower bounds under abstract conditions on the likelihood ratio in an appropriately selected sieve and show the sharpness of the lower bounds by providing matching upper bounds (up to $\log n$ factor) under non-restrictive conditions. Before proceeding to the lower bounds we quantify the information content in the communication-restricted, distributed setting for recovering the functional parameter of interest $f$.  For this end we define the \emph{bit adjusted sample size} (BASS) $N_{\text{bass}}=N_{\text{bass}}(B^{(1)}, \ldots, B^{(m)})$ for estimating $f\in B_{2,2}^r$ as the solution to the following equation:
\begin{align}\label{eq:def-bass-lower}
    N_{\text{bass}} &= \max\Big(n\log{N}, n\sum_{k=1}^m \big(\frac{B^{(k)}\log{N}}{\Nbass^{1/(1 + 2r)}} \wedge 1 \big)\Big).
\end{align}
The exact form  of $\Nbass$ will be verified below. The equation in \eqref{eq:def-bass-lower} does not have a closed form solution in general, but the BASS is always defined since the left-hand side of \eqref{eq:def-bass-lower} is increasing in $\Nbass$ and the right-hand side is decreasing in $\Nbass$. The expression can be greatly and easily simplified if we assume that $B^{(1)} = B^{(2)} = \dots = B^{(m)}=B$, which is presented below.

\begin{prop}[$\Nbass$ in symmetric case] \label{cor:Nbass-symmetric}
If $B^{(k)} = B$ for all $k = 1, \ldots, m$, then
\begin{align}\label{eq:Nbass-symmetric-defn}
\Nbass &= \begin{cases} N, & \text{if}\quad B \geq N^{1/(1+2r)}/\log{N}, \\
\left(NB\log{N}\right)^{\frac{1+2r}{2+2r}}, &\text{if}\quad \left(n\log{N}\right)^{1/(1+2r)}/m \leq B < N^{1/(1+2r)}/\log{N}, \\
n\log{N}, &\text{if}\quad B < \left(n\log{N}\right)^{1/(1+2r)}/m. \end{cases}
\end{align}
\end{prop}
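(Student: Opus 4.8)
In the symmetric case, writing $N=nm$, the defining relation \eqref{eq:def-bass-lower} collapses to
\begin{equation}\label{eq:bass-sym-plan}
\Nbass \;=\; \max\!\Big(n\log N,\ N\big(\tfrac{B\log N}{\Nbass^{1/(1+2r)}}\wedge 1\big)\Big).
\end{equation}
The plan is to first observe that \eqref{eq:bass-sym-plan} has a \emph{unique} solution and then, for each of the three ranges of $B$, to check that the claimed value solves \eqref{eq:bass-sym-plan}; uniqueness then finishes the proof. Uniqueness follows from monotonicity: the right-hand side of \eqref{eq:bass-sym-plan}, as a function of $\Nbass$, is continuous, positive and non-increasing --- it is obtained from the increasing map $\Nbass\mapsto\Nbass^{1/(1+2r)}$ by one order-reversing step (taking reciprocals) followed only by order-preserving steps (truncation at $1$, scaling by $N>0$, and $\max$ with the constant $n\log N$) --- whereas the left-hand side is strictly increasing; hence their difference is strictly increasing, runs from a negative value as $\Nbass\to 0^+$ to $+\infty$, and vanishes exactly once.

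Next, the right-hand side of \eqref{eq:bass-sym-plan} is piecewise of one of three forms according to two ``switches'': the truncation $\tfrac{B\log N}{\Nbass^{1/(1+2r)}}\wedge 1$, which equals $1$ iff $\Nbass\le (B\log N)^{1+2r}$, and the outer maximum, which is attained by the $N(\cdots)$ term iff that term is at least $n\log N$. Solving the fixed-point equation on each branch produces the three candidate values: $\Nbass=N$ where the truncation is active; $\Nbass=(NB\log N)^{(1+2r)/(2+2r)}$ where it is inactive and the $N(\cdots)$ term dominates (this value is forced by $\Nbass^{(2+2r)/(1+2r)}=NB\log N$, a one-line exponent computation); and $\Nbass=n\log N$ where $n\log N$ dominates. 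It remains to determine, for each candidate, the range of $B$ on which the branch hypotheses really do hold at that value of $\Nbass$. Plugging in $\Nbass=N$ shows the truncation is active there iff $B\ge N^{1/(1+2r)}/\log N$ (and then, in the regime of interest where $N\ge n\log N$, i.e.\ $m\ge\log N$, the outer $\max$ returns $N$), which is the top case. Plugging in $\Nbass=(NB\log N)^{(1+2r)/(2+2r)}$ and raising the pertinent inequalities to the powers $1+2r$ and $(2+2r)/(1+2r)$ shows the truncation is inactive there iff $B\le N^{1/(1+2r)}/\log N$ and that the $N(\cdots)$ term is at least $n\log N$ iff $B\ge (n\log N)^{1/(1+2r)}/m$ --- exactly the middle range. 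Plugging in $\Nbass=n\log N$ shows the $N(\cdots)$ term is at most $n\log N$ there iff $mB\le (n\log N)^{1/(1+2r)}$, i.e.\ $B<(n\log N)^{1/(1+2r)}/m$, which is the bottom case.

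The main obstacle is the bookkeeping: one has to run these threshold computations carefully and check that the three $B$-ranges are mutually consistent --- that together they cover all admissible $B$ and that $\Nbass$ is continuous across the two breakpoints (at $B=N^{1/(1+2r)}/\log N$ the middle formula returns $N$; at $B=(n\log N)^{1/(1+2r)}/m$ it returns $n\log N$, as a direct substitution confirms). Each of these reductions is a manipulation of monomial inequalities in $n,m,N,B,\log N$ raised to the powers $1+2r$ or $(2+2r)/(1+2r)$, so nothing beyond the monotonicity/fixed-point reduction above is required; by uniqueness, each verified candidate is \emph{the} solution, which establishes \eqref{eq:Nbass-symmetric-defn}.
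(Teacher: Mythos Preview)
Your argument is correct and is exactly the natural case-by-case verification the paper has in mind; the paper does not spell out a proof of this proposition, only supplying the monotonicity remark (left side increasing in $\Nbass$, right side non-increasing) that you likewise invoke for uniqueness, and then stating the three-case formula. Your parenthetical that the top case relies on the standing regime $m\ge\log N$ (equivalently $N\ge n\log N$, so that $\max(n\log N,N)=N$) is worth flagging, as the paper leaves this implicit; without it the three ranges in \eqref{eq:Nbass-symmetric-defn} need not partition the possible values of $B$.
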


We briefly discuss the above proposition. Based on \eqref{eq:Nbass-symmetric-defn}, we see that in the symmetric budget case the BASS is a continuous function of the budget $B$. Furthermore, note that it depends on the smoothness $r$ of the function $f$. For more regular functions communication constraints results less information loss. Also note that if the communication budget is large enough, then $\Nbass=N$ and there is no information loss due to the distributed architecture. However, if the communication budget is very small (i.e. $ B < \left(n\log{N}\right)^{1/(1+2r)}/m$) then there are no benefits in communicating between machines and it is sufficient or even advantageous to use only the information available on a single machine, assuming that the central machine also receives a local data set of size $n$. In the intermediate case, $\Nbass $ depends on the communication budget $B$, the smoothness level $r$, and the overall sample size $N$.

\subsection{Lower bounds}\label{sec:lower}

In this section we present lower bounds for the distributed minimax estimation rate in a general setting. To prove lower bounds one typically considers an appropriately chosen finite sieve $\mathcal{F}_0\subset \mathcal{F}$ of the function class $\mathcal{F}= B_{2,2}^r(L)$ of interest, i.e. note that
\begin{align*}
\adjustlimits{\inf}_{\hat{f}\in \mathcal{F}_{\text{dist}}(\boldsymbol{B})}{\sup}_{f\in \mathcal{F}}\mathbb{E}_{f}d(f,\hat{f})
\geq \adjustlimits{\inf}_{\hat{f}\in \mathcal{F}_{\text{dist}}(\boldsymbol{B})}{\sup}_{f\in {\mathcal{F}_0}}\mathbb{E}_{f}d(f,\hat{f}),    
\end{align*}
for arbitrary loss function $d: \mathcal{F}\times \mathcal{F} \mapsto [0,\infty)$. For Besov regularity classes it is natural to define the finite sieve with the help of the wavelet basis functions. For resolution level $j\in\mathbb{N}$, we define
\begin{align*}
\tilde{\mathcal{F}}_0(j,C_0,\eps)=\Big\{\fbb(t)+C_0 :\, \fbb(t)=   \epsilon 2^{-j(r + 1/2)}\sum_{h =0 }^{2^j-1}\beta_h\psi_{jh}(t)\,:\, \beta\in\{-1,1\}^{2^j}\Big\},
\end{align*}
for parameters $C_0,\eps\in\mathbb{R}$. The parameter $C_0$ shifts the function to ensure that the modeling assumptions are met; certain models, such as density estimation (where $C_0 + \fbb$ is the density) or Poisson regression (where $C_0 + \fbb$ is the mean of a Poisson), require $C_0 + \fbb(t) \geq 0$.  In our analysis we consider compactly supported wavelets (e.g. boundary corrected Daubechies wavelets with $S>r$ vanishing moments), see Section \ref{app: wavelets} for a brief review. For convenience we consider a subset of the basis functions at resolution level $j$,  such that they have pair-wise disjoint supports.  Therefore, we divide the interval $[0,1]$ into a partition of $d = c2^j$ (for some constant $0<c<1$ depending on $r$) disjoint intervals $I_1, \ldots, I_{d}$, such that each interval $I_k$ contains the full support of a wavelet basis function $\psi_{j,h}$ for  $h \in \{0,\ldots 2^j - 1\}$. For Daubechies wavelets with $S$ vanishing moments, this is possible for $c \leq 1/(2S + 2)$. By slightly abusing our notations we re-index the basis functions so that the subset of size $d$ of wavelets with disjoint support are indexed by $h = 1,\ldots, d$.  Hence we consider finite sieves of the form
\begin{align}\label{def: sieve_disjoint}
\mathcal{F}_0(d,C_0,\eps)=\Big\{\fbb(t)+ C_0: \,   \fbb(t) = \epsilon d^{-(r + 1/2)}\sum_{h =1 }^{d}\beta_h\psi_{jh}(t)\,:\, \beta\in\{-1,1\}^{d}\Big\},
\end{align}
where $\psi_{jh}$ and $\psi_{jh'}$ have disjoint support for any $1\leq h<h'\leq d$. For a fixed $d$, we will take wavelets at resolution $j$, where $j$ is the smallest resolution level that allows for the construction of $d$ disjoint wavelets. 

We give our assumption in terms of the log-likelihood-ratios in the sieve $\mathcal{F}_0(d,C_0,\eps)$. Let us denote the log-likelihood ratio between two elements $\fbb(t)+ C_0,f_{\boldsymbol{\beta}'}(t)+ C_0\in \mathcal{F}_0(d,C_0,\eps)$ for a single observation $X_i^{(k)}$  by 
\begin{align*}
    W_{d}^{(k)}(X_i^{(k)}, \bbeta, \bbeta') &= \log\frac{p_{\fbb}(X_i^{(k)})}{p_{\fbbp}(X_i^{(k)})}. 
\end{align*}
We assume below that the log-likelihood-ratio for the observations is sufficiently small, both in expectation and with high-probability, and that it satisfies a nonrestrictive decomposition structure. This is formalized in the following assumptions.

\begin{assumption}[Expectation control] \label{ass:W-expect-control} Assume that there exists a constant $C>0$ such that
$$\mathbb{E}_{ \bbeta}[W_{d}^{(k)}(X_i^{(k)},  \bbeta,  \bbeta') ] \leq C \eps^2 d^{-2r}.$$
\end{assumption}

\begin{assumption}[Decomposable structure] \label{ass:structural} Assume that the sample space can be partitioned into events $\chi_1,\ldots, \chi_d$ such that  $\mathbb{P}_{\bbeta}(\chi_h)=1/d$ for all $\bbeta\in\{-1,1\}^d$, and for all $i=1,...,n$, $k=1,...,m$ and $h=1,...,d$ it holds that the distribution $X_{i}^{(k)}|(X_{i}^{(k)}\in \chi_h)$ only depends on $\beta_h$ and not on $\beta_{h'}$ for $h'\neq h$.
\end{assumption}

\begin{assumption}[Tail control] \label{ass:tail} Assume that for $a_n = C\eps d^{-1/2-r}\sqrt{n\log(N)}$ the event
$$    E_h^{(k)}(a) = \left\{ X^{(k)}\,:\, \max_{\bbeta,\bbeta'} \sum_{X_{i}^{(k)}\in\chi_h} W_{d}^{(k)}\left(X_{i}^{(k)}, \bbeta, \bbeta'\right)\leq  a \right\}$$
satisfies 
$$\mathbb{P}_{\beta_h}\left(E_h^{(k)}(a_n)\right)=1+o(m^{-2}d^{-2})\qquad \forall\, h\in\{1,...,d\},\,\, k\in\{1,...,m\},\,\, \beta_h\in\{-1,1\}.$$
\end{assumption}

\begin{remark}
In Assumption \ref{ass:structural} one can consider arbitrary fixed distribution over the partition  events $\chi_{h}$, $h=1,...,d$, which for simplicity we take to be uniform. Our results can be easily generalised to other distributions over the partition. 
\end{remark}

We provide a brief discussion of the above assumptions. Assumption \ref{ass:W-expect-control} is equivalent to bounding the KL divergence between $p_\fbb$ and $p_{\fbbp}$, which is a standard tool used for deriving minimax lower bounds in various nonparametric models, see for instance Theorem 6.3.2 of \cite{Gine}. Assumption \ref{ass:structural} means that the dependence on the functional parameter must be spatially localized, which is true for many common models. When working with a wavelet based testing sieve, we can select the partition based on the supports of the wavelets used in $\fbb$. Finally, in Assumption \ref{ass:tail} we require that the maximum of the likelihood ratio over the sieve $\mathcal{F}_0(d,C_0,\eps)$ evaluated at the data belonging to bin $I_h$ is bounded (by $a_n$) with high probability. While the first assumption controls the mass (expected value) of the likelihood ratio, this assumption controls the tail behaviour. Based on these assumptions we can formulate our general lower bound theorem.

\begin{thm} \label{thm:gen_lower_bound_thm}
Suppose Assumptions \ref{ass:W-expect-control}, \ref{ass:structural}, and \ref{ass:tail} hold. The distributed minimax rate with communication constraints $\boldsymbol{B} = (B^{(1)},\ldots, B^{(m)})$ for recovering $f \in B_{2,2}^r(L)$ is bounded from below by
\begin{align*}
    \inf_{\hat{f} \in \mathcal{F}_{\text{dist}}(\boldsymbol{B})} \sup_{f \in B_{2,2}^r(L) } \mathbb{E}_{f}\Vert \hat{f} - f\Vert_2^2 \gtrsim \Nbass^{-\frac{2r}{1+2r}}.
\end{align*}
\end{thm}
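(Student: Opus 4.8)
The strategy is a standard Assouad-type reduction over a wavelet hypercube combined with an information-theoretic analysis tailored to the message-length constraint, engineered so that the per-coordinate information budget sums to exactly $N_{\text{bass}}$.

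\emph{Step 1: reduction to a hypercube.} Fix $d \asymp N_{\text{bass}}^{1/(1+2r)}$ (with the associated resolution $j$), a small constant $\eps>0$, and $C_0$ as dictated by the model. Since $\|\fbb\|_{B_{2,2}^r}^2 \asymp 2^{2jr}\eps^2 d^{-(2r+1)}d \asymp \eps^2$, the sieve $\mathcal{F}_0(d,C_0,\eps)$ lies in $B_{2,2}^r(L)$ once $\eps$ is small, so the minimax risk is bounded below by the Bayes risk over $\mathcal{F}_0(d,C_0,\eps)$ under the uniform prior on $\bbeta\in\{-1,1\}^d$. Because the wavelets are orthonormal with vanishing moments and $\langle \fbb,\psi_{jh}\rangle=\eps d^{-(r+1/2)}\beta_h$, taking $\hat\beta_h=\mathrm{sign}\langle\hat f-C_0,\psi_{jh}\rangle$ yields $\|\hat f-(\fbb+C_0)\|_2^2\ge \eps^2 d^{-(2r+1)}\sum_{h=1}^d\mathbbm{1}\{\hat\beta_h\neq\beta_h\}$, hence
$$\sup_{f\in\mathcal{F}_0}\mathbb{E}_f\|\hat f-f\|_2^2\gtrsim \eps^2 d^{-(2r+1)}\sum_{h=1}^d\bar{\mathbb{P}}(\hat\beta_h\neq\beta_h),$$
where $\bar{\mathbb{P}}$ is the Bayes mixture. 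It then suffices to show $\sum_h\bar{\mathbb{P}}(\hat\beta_h\neq\beta_h)\gtrsim d$, since the right side is then $\gtrsim\eps^2 d^{-2r}\asymp\Nbass^{-2r/(1+2r)}$.

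\emph{Step 2: reduction to mutual information.} By binary Fano/Pinsker it is enough to have $I(\beta_h;Y^{(1)},\dots,Y^{(m)})$ below a small absolute constant for a constant fraction of $h$, which by Markov's inequality follows from $\tfrac1d\sum_{h=1}^dI(\beta_h;Y^{(1:m)})\lesssim\eps^2$. Conditioning on $\beta_{-h}$ (independent of $\beta_h$) and using that $Y^{(1)},\dots,Y^{(m)}$ are conditionally independent given $\bbeta$ (each $Y^{(k)}$ is a randomized function of $X^{(k)}$ alone), sub-additivity of mutual information over independent channels gives $I(\beta_h;Y^{(1:m)})\le\sum_{k=1}^m\bar I_h^{(k)}$ with $\bar I_h^{(k)}:=\mathbb{E}_{\beta_{-h}}I(\beta_h;Y^{(k)}\mid\beta_{-h})$. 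The goal reduces to $\sum_{k=1}^m\sum_{h=1}^d\bar I_h^{(k)}\lesssim\eps^2 d$.

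\emph{Step 3 (the crux): per-machine information bounds.} For each machine $k$ we bound $\sum_h\bar I_h^{(k)}$ in two ways. (a) \emph{Unconstrained bound.} Data-processing gives $\bar I_h^{(k)}\le\mathbb{E}_{\beta_{-h}}I(\beta_h;X^{(k)}\mid\beta_{-h})$, which is at most a KL divergence that, by the decomposable structure of Assumption \ref{ass:structural}, adds up over $h$ to the full KL between $P_{\bbeta}^{\otimes n}$ and $P_{-\bbeta}^{\otimes n}$; Assumption \ref{ass:W-expect-control} then gives $\sum_h\bar I_h^{(k)}\lesssim n\eps^2 d^{-2r}$. (b) \emph{Communication bound.} Since $Y^{(k)}$ carries at most $B^{(k)}$ bits, a strong-data-processing (``cut-and-paste'') inequality for length-constrained messages gives $\sum_h\bar I_h^{(k)}\lesssim B^{(k)}\max_h\gamma_h^{(k)}$, where $\gamma_h^{(k)}$ is a localized $\chi^2$-type contraction coefficient for the channel $\beta_h\mapsto X^{(k)}$ restricted to the bin $\chi_h$. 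Here Assumption \ref{ass:tail} enters: on $E_h^{(k)}(a_n)$ the localized likelihood ratio lies in $[e^{-a_n},e^{a_n}]$, so $\gamma_h^{(k)}\lesssim a_n^2+\mathbb{P}((E_h^{(k)})^c)=C\eps^2 n d^{-(2r+1)}\log N+o(m^{-2}d^{-2})$, the remainder being negligible after summing over $h\le d$, $k\le m$ (even against the $B^{(k)}\le N$ factors). Combining (a) and (b), $\sum_h\bar I_h^{(k)}\lesssim\eps^2 n d^{-(2r+1)}\min(d,\ B^{(k)}\log N)$. Summing over $k$ and recalling $d\asymp\Nbass^{1/(1+2r)}$,
$$\sum_{k=1}^m\sum_{h=1}^d\bar I_h^{(k)}\lesssim\eps^2 n d^{-2r}\sum_{k=1}^m\Big(\tfrac{B^{(k)}\log N}{\Nbass^{1/(1+2r)}}\wedge1\Big)\le\eps^2 d^{-2r}\Nbass\asymp\eps^2 d,$$
where the last inequality is the defining relation \eqref{eq:def-bass-lower} for $\Nbass$ (the $\max$ with $n\log N$ only enlarges $\Nbass$, hence $d$, which only helps). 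Dividing by $d$ closes Step 2 with a constant proportional to $\eps^2$, made small by choosing $\eps$ small.

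\emph{Main obstacle.} The delicate point is Step 3(b): pinning down the correct localized $\chi^2$-contraction coefficient for a bin-restricted length-$B^{(k)}$ message and showing, via the truncation of Assumption \ref{ass:tail}, that it is of order $a_n^2\asymp\eps^2 nd^{-(2r+1)}\log N$ rather than the larger, log-free per-bin KL---it is exactly the $\sqrt{\log N}$ in $a_n$ that produces the $\log N$ in \eqref{eq:def-bass-lower}, and it (together with the $o(m^{-2}d^{-2})$ error terms) must be tracked so the bound still collapses to $\eps^2 d$ through the $\Nbass$ fixed-point equation. A secondary point is that the $n\log N$ floor in \eqref{eq:def-bass-lower} is precisely what forces $d\gtrsim(n\log N)^{1/(1+2r)}$ and hence $a_n\lesssim\eps$, which is what makes the truncation in (b) effective in the first place.
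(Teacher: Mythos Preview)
Your proposal is correct and closely parallels the paper's argument; the two differ mainly in packaging. You use a per-coordinate Assouad reduction, bounding $\sum_h I(\beta_h;Y^{(1:m)})$, whereas the paper applies a Fano-type inequality (Corollary~1 of \cite{duchi_optimality_2014}) to the full hypercube variable $F$ and bounds $I(F;Y)=\sum_k I(F;Y^{(k)})$ directly; both routes reduce to showing the total information is $\lesssim \eps^2 d$. For the crucial communication bound (your Step~3(b)), the paper does not invoke a generic SDPI/$\chi^2$-contraction statement but instead conditions on the latent bin labels $U_i^{(k)}$ (where $U_i^{(k)}=h$ iff $X_i^{(k)}\in\chi_h$) and then imports Theorem~A.13 of \cite{szabo_adaptive_2019}, which delivers
\[
I(F;Y^{(k)}\mid U^{(k)})\le 2e^{2a_n}(e^{a_n}-1)^2\,H(Y^{(k)}) + \text{(error terms from }\mathbb{P}((E_h^{(k)})^c)\text{)},
\]
with $(e^{a_n}-1)^2\asymp a_n^2$ playing exactly the role of your localized contraction coefficient $\gamma_h^{(k)}$. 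The bound $H(Y^{(k)})\le 2B^{(k)}+1$ (a source-coding estimate) then gives your $B^{(k)}\cdot a_n^2$. Your diagnosis of the main obstacle is accurate: the paper does not re-derive this SDPI step here but cites it, and your observation that the $n\log N$ floor in the definition of $\Nbass$ is precisely what forces $a_n\lesssim\eps$ (so that the truncation is effective) matches the paper's Lemma~\ref{lem:ass2-info-bound} exactly. The final combination through the $\Nbass$ fixed-point equation is identical in both approaches.
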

The proof is deferred to Section \ref{sec:proof-lower-bound}, but we provide a few comments here. The proof is an extension of the common technique for deriving minimax bounds using Fano's inequality. The general technique follows the ideas of \cite{zhang2013information}, which studied parametric models. These techniques were extended to the nonparametric regression model in \cite{szabo_adaptive_2019}. In non-distributed models the main step is to control the expected value of the log-likelihood-ratio of two distributions in the testing sieve (the quantity $\cramped{W_{d}^{(k)}}$ discussed above). The difficulty of extending to distributed architectures with communication constraints is that we must control not just the expectation of the likelihood ratio (Assumption \ref{ass:W-expect-control}), but also the tail of the distribution of the ratio (Assumption \ref{ass:tail}).



\subsection{Upper bounds}\label{sec:upper}

Here we introduce conditions that ensure that the general lower bounds derived in Theorem \ref{thm:gen_lower_bound_thm} are tight by providing matching upper bounds (up to log-factors). We focus on the case where the communication budget across machines are all equal, $B = B^{(1)} = \dots = B^{(m)}$. We also assume that the central machine receives a local sample of size $n$. Then if the communication budget is too small (i.e.  $B<\left(n\log{N}\right)^{1/(1+2r)}/m$), then a standard local estimator (based solely on the local data available at the central machine) achieves the minimax lower bounds $n^{-r/(1+2r)}$. In this case distributed methods are not adventageous compared to methods considering only local information hence our focus lies on the more interesting large enough budget case (i.e. $B\geq \left(n\log{N}\right)^{1/(1+2r)}/m$).

We will estimate the wavelet coefficients $f_{jh}$ of $f(x)=\sum_{j=0}^{\infty}\sum_{h = 0}^{2^j - 1}f_{jh}\psi_{jh}(x)$ using local estimators $\hat{f}_{n,jh}^{(k)}$ on all machines $k = 1,\ldots, m$. The estimator $\hat{f}_{n,jh}^{(k)}$ should be thought of as the local version of the standard estimator of the wavelet coefficient $f_{jh}$ that would be used in a non-distributed setting. We then compress $\cramped{\hat{f}_{n,jh}^{(k)}}$ of $f_{jh}$ into a $\log_2{N}$ long binary string $\cramped{Y_{jh}^{(k)}}$ following Algorithm 1 from \cite{szabo_adaptive_2019}, which is reproduced as Algorithm \ref{alg:transapprox} in the Supplement for reference. When $|\hat{f}_{n,jh}^{(k)}| > \sqrt{N}$, we transmit $Y_{jh}^{(k)} = 0$ to reduce communication costs. We denote the binary approximation of the estimators $\hat{f}_{n,jh}^{(k)}$ by $\tilde{f}_{n,jh}^{(k)}=\tilde{f}_{n,jh}^{(k)}(Y_{jh}^{(k)})$. We provide below general assumptions on the local estimators to ensure that upper bounds match (up to a logarithmic factor) the lower bounds derived in the preceding section.



\begin{assumption}\label{ass:UB}
There exist $\sigma^2>0$ such that the local wavelet coefficient estimators $\hat{f}_{n,jh}^{(k)}$, $j\leq \log (N)/(1+2r)$, $h=0,...,2^j-1$ satisfy
\begin{enumerate}
    \item $\mathbb{E}_f[\hat{f}_{n,jh}^{(k)}] = f_{jh}$, $\text{var}(\hat{f}_{n,jh}^{(k)}) \leq \sigma^2/n$,
    \item $\mathbb{P}_f(\hat{f}_{n,jh}^{(k)} > \sqrt{N}) \leq o( m^{-1}N^{-(3+4r)/(1+2r)})$.
\end{enumerate}

\end{assumption}
We briefly discuss the assumptions. The condition on the mean and variance of the non-zero  estimators are standard. The tail condition is necessitated by the communication constraint. In order to ensure that our estimation procedure satisfies the communication constraint, we do not transmit estimators larger than $\sqrt{N}$ (see Algorithm \ref{alg:block-est-alg} in the Supplement) as this would require too many bits. Since we truncate large estimators before transmitting them, we need to control the probability that an estimator is truncated in order to control the error introduced by this truncation. In all examples considered in this paper, the local estimator $\hat{f}_{n,jh}^{(k)}$ is an  average which converges almost surely to $f_{jh}$ by the law of large numbers. In light of this, the tail condition is mild. Section \ref{sec:sufficient:UB} gives sufficient conditions for the tail condition in terms of existence of higher order moments.


When the communication budget is too small, we must select a subset of wavelet coefficients for each machine to estimate. Our choice will extend the construction given in \cite{szabo_adaptive_2019} in context of the nonparametric regression model to our abstract setting. We divide the machines into groups of size $\kappa$, where
\begin{align}
    \kappa = \max\left(1, \left\lfloor\left(\frac{B}{\log_2{N}}m\right)^{\frac{1+2r}{2+2r}}n^{-\frac{1}{2+2r}} \right\rfloor\wedge m\right). \label{eq:optimal-kappa}
\end{align}
Machines in different blocks will transmit the binary strings $Y_{jh}^{(k)}$ corresponding to different  wavelet coefficients. Machines in the $\ell$th block, i.e.  machines $k\in \{(\ell-1)\kappa+1,...,\ell\kappa\}$ with $\ell\in \{1,...,\lceil m/\kappa\rceil\}$, will transmit the coefficients in the  $\ell$th block, i.e.  transmit $Y_{jh}$ with indices satisfying $2^j + h = (\ell - 1)\lfloor B/\log_2{N}\rfloor$ to $\ell\lfloor B/\log_2{N}\rfloor$. For all other values of $j$ and $h$ we set $\hat{f}_{n,jh}^{(k)} = 0$ and transmit nothing. See Figure \ref{fig:block-ests} for graphical representation.

The aggregated estimator for the $(j,h)$th wavelet coefficient with  $2^j + h = (\ell - 1)\lfloor B/\log_2{N}\rfloor$ to $\ell\lfloor B/\log_2{N}\rfloor$ is defined as
\begin{align*}
    \hat{f}_{jh} &= \frac{1}{\kappa}\sum_{k=(\ell-1)\kappa+1}^{\ell\kappa} \tilde{f}_{n,jh}^{(k)}
\end{align*}
and the corresponding estimator of the functional parameter is 
\begin{align}
\hat{f}_{n,m}(\cdot)=\sum_{j=0}^{j_n}\sum_{h=0}^{2^j-1} \hat{f}_{jh}  \psi_{jh}(\cdot),\label{eq: block:est}
\end{align} 
with $j_n = \lfloor \log_2{(n\kappa)}/(1 + 2r)\rfloor$ being the maximum resolution level we will estimate with the given budget. The theorem below states that this estimator achieves the minimax distributed rate up to a logarithmic factor.

\begin{figure}
\centering
     \begin{tikzpicture}
    \foreach \x in {0,1,...,18}
    \foreach \y in {0,1,2,3,4,5,6}
    {
    \draw (.5*\x,.5*\y) circle (0.1cm);
    }
    \foreach \x in {0,1,...,8}
    \foreach \y in {6,5,4}
    {
    \draw[fill = gray] (.5*\x,.5*\y) circle (0.1cm);
    \draw[fill = gray] (.5*\x +4.5, .5*\y-1.5) circle (0.1cm);
    }
    \draw[<->] (0, -0.5) -- (9,-0.5);
    \draw (4.5, -0.7) node {$N^{\frac{1}{1 +2r}}$};
    \draw[<->] (9.5,0) -- (9.5, 3);
    \draw (9.8, 1.5) node {$m$};
    \draw[<->] (0,3.4) -- (4,3.4);
    \draw (2, 3.7) node {$B/\log_2{N}$};
    \draw[<->] (-0.5,3) -- (-0.5,2);
    \draw (-0.7, 2.5) node {$\kappa$};
    \draw[<->] (0, -.95) -- (8.5,-.95);
    \draw (4.25,-1.29) node {$\Nbass^{\frac{1}{1+2r}}$};
  \end{tikzpicture} 
    \caption{Diagram of block estimators: columns correspond to wavelet coefficients and rows represent $m$ machines. Each machine can transmit at most $B/\log_2{N}$ coefficient estimates (with negligible approximation error). The gray circles represent the coefficients estimated and transmitted at a given machine.  \label{fig:block-ests}} 
\end{figure}
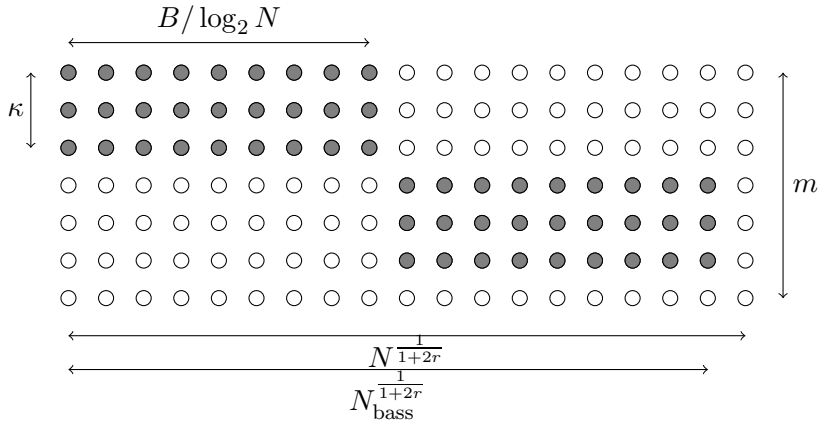

\begin{thm}[Upper bound on minimax rate]\label{thm:gen_upper_bound_thm}
Under Assumption \ref{ass:UB} the distributed estimator $\hat{f}_{n,m}\in\mathcal{F}_{distr}(\textbf{B})$  defined in \eqref{eq: block:est} satisfies
\begin{align*}
    \sup_{f \in B_{2,2}^r(L)} \mathbb{E}_{f} \Vert \hat{f}_{n,m} - f \Vert_2^2 \lesssim ( \Nbass/\log N )^{-\frac{2r}{1+2r}}.
\end{align*}
\end{thm}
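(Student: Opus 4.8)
The plan is the classical bias--variance analysis of a linear wavelet estimator, modified to absorb the compression and truncation steps, followed by the arithmetic relating the effective per-coefficient sample size $n\kappa$ to $\Nbass$. First I would record that $\hat f_{n,m}\in\mathcal F_{distr}(\boldsymbol B)$: each machine sends at most $\lfloor B/\log_2 N\rfloor$ strings of length $\log_2 N$, hence at most $B$ bits, and the block construction of Figure \ref{fig:block-ests} with the $\kappa$ of \eqref{eq:optimal-kappa} assigns every index $2^j+h$ with $j\le j_n$ to some block. Then, by orthonormality of $\{\psi_{jh}\}$ and $\hat f_{jh}=0$ for $j>j_n$,
\[
 \mathbb E_f\Vert\hat f_{n,m}-f\Vert_2^2
 = \sum_{j\le j_n}\sum_{h=0}^{2^j-1}\mathbb E_f\big(\hat f_{jh}-f_{jh}\big)^2
 + \sum_{j>j_n}\sum_{h=0}^{2^j-1} f_{jh}^2 ,
\]
and membership $f\in B_{2,2}^r(L)$ controls the tail (bias) term: $\sum_{j>j_n}\sum_h f_{jh}^2\lesssim L^2 2^{-2rj_n}\lesssim (n\kappa)^{-2r/(1+2r)}$, since $j_n=\lfloor\log_2(n\kappa)/(1+2r)\rfloor$.

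The core is the per-coefficient bound for $j\le j_n$, where $\hat f_{jh}=\kappa^{-1}\sum_k\tilde f^{(k)}_{n,jh}$ averages over the $\kappa$ machines of the relevant block. I would decompose, for each such $k$,
\[
 \tilde f^{(k)}_{n,jh}-f_{jh}
 = \big(\hat f^{(k)}_{n,jh}-f_{jh}\big)
 + \big(\tilde f^{(k)}_{n,jh}-\hat f^{(k)}_{n,jh}\big)\mathbbm{1}_{\{|\hat f^{(k)}_{n,jh}|\le\sqrt N\}}
 - \hat f^{(k)}_{n,jh}\,\mathbbm{1}_{\{|\hat f^{(k)}_{n,jh}|>\sqrt N\}} .
\]
By Assumption \ref{ass:UB}(1) the first summand has mean $0$ and variance $\le\sigma^2/n$. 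By the construction of Algorithm \ref{alg:transapprox} with $\log_2 N$ bits over a range of length $\asymp\sqrt N$, the quantization term is, conditionally on $\hat f^{(k)}_{n,jh}$, mean $0$ with variance $\lesssim N\,4^{-\log_2 N}=N^{-1}$. The truncation term has absolute expectation at most $\big(\mathbb E_f[(\hat f^{(k)}_{n,jh})^2]\big)^{1/2}\mathbb P_f(|\hat f^{(k)}_{n,jh}|>\sqrt N)^{1/2}$, which Assumption \ref{ass:UB}(2) renders $o\!\big(m^{-1/2}N^{-(3+4r)/(2(1+2r))}\big)$ since $\mathbb E_f[(\hat f^{(k)}_{n,jh})^2]=f_{jh}^2+\text{var}(\hat f^{(k)}_{n,jh})\lesssim L^2+\sigma^2$. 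Since the $\tilde f^{(k)}_{n,jh}$ are independent across $k$, averaging shrinks the variances of the two mean-zero parts by a factor $\kappa$ while the machine-independent truncation bias is unchanged, so (using $N\ge n$)
\[
 \mathbb E_f\big(\hat f_{jh}-f_{jh}\big)^2 \lesssim \frac{\sigma^2+1}{n\kappa}+o\!\big(m^{-1}N^{-(3+4r)/(1+2r)}\big).
\]
Summing over the at most $2^{j_n+1}\le 2(n\kappa)^{1/(1+2r)}\le 2N^{1/(1+2r)}$ coefficients with $j\le j_n$, and using $N^{1/(1+2r)-(3+4r)/(1+2r)}=N^{-2}$, the first sum above is $\lesssim (n\kappa)^{-2r/(1+2r)}+o(m^{-1}N^{-2})$. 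The two contributions balance at $(n\kappa)^{-2r/(1+2r)}$, giving $\sup_{f\in B_{2,2}^r(L)}\mathbb E_f\Vert\hat f_{n,m}-f\Vert_2^2\lesssim (n\kappa)^{-2r/(1+2r)}$. Finally $n\kappa\gtrsim\Nbass/\log N$ is a direct calculation from \eqref{eq:optimal-kappa} and Proposition \ref{cor:Nbass-symmetric}: for $B\ge N^{1/(1+2r)}/\log N$ one has $\kappa=m$ and $n\kappa=N=\Nbass$; for the smallest admissible budgets $\kappa=1$ and $n\kappa=n=\Nbass/\log N$; and in between $\kappa\asymp\big((B/\log_2 N)\,m\big)^{(1+2r)/(2+2r)}n^{-1/(2+2r)}$, whence $n\kappa\asymp(NB/\log N)^{(1+2r)/(2+2r)}\asymp\Nbass/\log N$.

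\textbf{Main obstacle.}
The delicate point is the per-coefficient bound and, within it, the truncation contribution: replacing estimators exceeding $\sqrt N$ by $0$ introduces a bias identical for all machines in a block, so it is \emph{not} averaged down and must be controlled uniformly and shown negligible after summation over all $\asymp N^{1/(1+2r)}$ estimated coefficients --- this is exactly what the polynomial tail exponent in Assumption \ref{ass:UB}(2) is calibrated for. A secondary, clerical difficulty is checking that the block construction with the $\kappa$ of \eqref{eq:optimal-kappa} covers all coefficients up to resolution $j_n$ (so the first sum carries no uncompensated bias) and carrying the floor/ceiling operations through the three-regime identity $n\kappa\asymp\Nbass/\log N$.
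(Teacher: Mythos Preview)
Your overall skeleton --- Parseval, the Besov tail bound $\sum_{j>j_n}\sum_h f_{jh}^2\lesssim 2^{-2rj_n}$, and a per-coefficient variance bound $\sigma^2/(n\kappa)$ --- matches the paper's proof. There are, however, two points where your write-up diverges from the actual construction and one of them is a genuine gap.

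\textbf{Quantization is not mean zero.} Algorithm~\ref{alg:transapprox} is deterministic truncation of the binary expansion, not randomized rounding, so conditional on $\hat f^{(k)}_{n,jh}$ the quantization error is a fixed number in $[0,N^{-1/2}]$, not a mean-zero random variable. This is harmless for the bound: $|B_k|\le N^{-1/2}$ already gives $\bar B^2\le N^{-1}$, which after summing over $2^{j_n+1}\le 2N^{1/(1+2r)}$ coefficients is $\lesssim N^{-2r/(1+2r)}$. But the stated reason (averaging down a mean-zero variance) is incorrect.

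\textbf{The truncation term is not controlled.} Writing $T_k=\hat f^{(k)}_{n,jh}\mathbbm{1}\{|\hat f^{(k)}_{n,jh}|>\sqrt N\}$ and $\bar T=\kappa^{-1}\sum_k T_k$, you bound only $\mathbb E|T_k|$ via Cauchy--Schwarz and then insert $(\mathbb E[T_k])^2$ into the MSE. But $\mathbb E[\bar T^2]=(\mathbb E[T_1])^2+\kappa^{-1}\mathrm{var}(T_1)$, and Assumption~\ref{ass:UB} gives no handle on $\mathrm{var}(T_1)\le\mathbb E[T_1^2]=\mathbb E[(\hat f^{(k)})^2\mathbbm{1}\{|\hat f^{(k)}|>\sqrt N\}]$: Cauchy--Schwarz here would require a fourth-moment bound you do not have, and the crude bound $\mathbb E[T_1^2]\le\mathbb E[(\hat f^{(k)})^2]\lesssim 1$ yields, after summing over $2^{j_n}$ coefficients, a term of order $2^{j_n}/\kappa$, which is far too large. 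Your identification of the truncation as the delicate point is right, but the argument as written does not close.

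The paper handles this by an event decomposition rather than a termwise one: let $R_n$ be the event that no truncation occurs for any $(j,h,k)$. On $R_n$ the truncation terms vanish identically and one only has $Z_{jh}+\epsilon_{jh}$ with $|\epsilon_{jh}|\le N^{-1/2}$ deterministically. On $R_n^\complement$ one uses the deterministic bound $|\tilde f^{(k)}_{n,jh}|\le\sqrt N$ (either a quantized value of something in $[-\sqrt N,\sqrt N]$ or $0$), hence $|\hat f_{jh}|\le\sqrt N$ and $\|\hat f_{n,m}\|_2^2\le 2^{j_n+1}N\lesssim N^{(2+2r)/(1+2r)}$, together with the union bound $\mathbb P(R_n^\complement)\le m\cdot 2^{j_n}\cdot o(m^{-1}N^{-(3+4r)/(1+2r)})=o(N^{-2})$, which is exactly what the exponent in Assumption~\ref{ass:UB}(2) is calibrated for. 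This sidesteps the second-moment issue entirely.
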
                                                               

\begin{remark}
The $\log N$ factor in the upper bound is not optimized and in some regimes our estimator performs better, for instance for large enough budget (i.e. $N^{1/(1+2r)}\log{N} \leq B$) it can be completely removed.
\end{remark}


Assumption \ref{ass:UB} can be easily verified for many models using the sufficient conditions given in Section \ref{sec:sufficient:UB}.

\subsection{Sufficient conditions for regression type problems} \label{sec:suff-conds}
In this section we consider ``regression type'' models and provide sufficient conditions for upper and lower bounds for them. We assume that there are $m$ machines and in each machine $k=1,...,m$ we observe a sample $(T^{(k)},X^{(k)})=\{( T_{i}^{(k)}, X_{i}^{(k)})\}_{1 \leq i \leq n}$ whose density $p_f(x,t)$ factors as
\begin{align}
    X_i^{(k)} \mid T_{i}^{(k)} \sim p_{X \mid T}(x \mid f(T_{i}^{(k)})),\quad T_i^{(k)} \sim p_T(t),\quad i=1,...,n,\quad k=1,...,m,\label{eq:model:reg}
\end{align}
for densities $p_T(t)$ and $p(x \mid f(t))$. For simplicity we take  $p_T(t)$ to be the uniform density on $[0,1]$, but it can be easily extended to bounded, positive densities in compact intervals. In this more structured model it is possible to present sufficient conditions that are easier to verify in many examples.

\subsubsection{Sufficient conditions for lower bounds}
In this section we collect sufficient conditions for Assumptions \ref{ass:W-expect-control}, \ref{ass:structural} and \ref{ass:tail} when we are considering models of the form \eqref{eq:model:reg}. These conditions consider a more restrictive class of models, but are typically easier to verify in a range of our examples. Without these results, verifying Assumption \ref{ass:tail} requires us to establish a complicated tail bound on the likelihood-ratio (rather than the random variable itself). First let us introduce some short hand notations for the log-likelihood-ratio, which decomposes into a sum under \eqref{eq:model:reg}:
\begin{align*}
    W_{d}^{(k)} =\sum_{i=1}^n W_{d,i}^{(k)}= \sum_{i=1}^n W_{d}^{(k)}(X_{i}^{(k)}, T_{i}^{(k)}, \boldsymbol{\beta}, \boldsymbol{\beta'}).
\end{align*}


First we show that in the model \eqref{eq:model:reg} Assumption \ref{ass:structural} automatically holds. Consider the partition $\chi_h=I_h \times \mathbb{R}$ for $h = 1,\ldots, d$ and note that the event $(T_i^{(k)},X_i^{(k)})\in \chi_h$ holds with probability $1/d$. Furthermore, conditional on the preceding event, the random design point $T_i^{(k)}$ is uniformly distributed over $I_h$, while $X_i^{(k)}|T_i^{(k)}\sim p_{X|T}\big(\cdot |f_{\bbeta}(T_i^{(k)})\big)$ only depends on the value of $\beta_h$ as $f_{\bbeta}(T_i^{(k)})=\eps d^{-(r+1/2)} \beta_h \psi_{j_n,h}(T_i^{(k)})$. Next we give sufficient conditions for Assumptions \ref{ass:W-expect-control} and \ref{ass:tail}.

\begin{assumption}(Assumptions to bound the likelihood ratio)\label{ass:expo_design}
\begin{enumerate}
    \item The log-likelihood ratio for the $i$th observation takes the form
    \begin{align*}
        W_{d,i}^{(k)} &= g_1(T_i^{(k)})h(X_i^{(k)}) - g_2(T_i^{(k)}),
    \end{align*}
    for some functions $g_1, g_2$ and $h$. 
    \item The conditional distribution $h(X_i^{(k)})|T_i^{(k)}$ follows a sub-exponential distribution (see Section \ref{sec:subexp} for definition), i.e.
$$h(X_i^{(k)})|T_i^{(k)} \sim SE\big(\nu^2(T_i^{(k)}), b(T_i^{(k)})\big).$$
    \item $W_{d,i}^{(k)}$ satisfies the following almost-sure bounds:
    \begin{align*}
        \max_{t \in [0,1]} \mathbb{E}[W_{d,i}^{(k)} \mid T_i^{(k)}=t] - \min_{t \in [0,1]} \mathbb{E}[W_{d,i}^{(k)} \mid T_i^{(k)}=t] &= O(d^{-r}), \\
        \max_{t \in [0,1]} \nu^2(t)g_1^2(t) &=  O(d^{-2r}), \\
        \max_{t \in [0,1]} g_1(t)b(t) &= O(d^{-r}).
    \end{align*}
    \item For some $c_1 > 0$ the expectation satisfy
    \begin{align*}
        \mathbb{E}[W_{d,i}^{(k)}] &=  O(d^{-2r}), \\
        c_1  d^{-1-2r}&\leq \mathbb{E}\left[\nu^2(T_i^{(k)})g_1^2(T_i^{(k)})\mathbbm{1}_{T_{i}^{(k)} \in I_{h}}\right]. 
    \end{align*}
\end{enumerate}
\end{assumption}
The assumption that $\mathbb{E}[W_{d,i}^{(k)}] = O(d^{-2r})$ implies Assumption \ref{ass:W-expect-control}. Finally, Assumption \ref{ass:expo_design} implies Assumption \ref{ass:tail}.
\begin{lem}\label{lem:lower-bound-sufficient-condition}
 For $m\leq n^{\gamma}$, with $\gamma<2r$, Assumption \ref{ass:expo_design} implies Assumption \ref{ass:tail}.
\end{lem}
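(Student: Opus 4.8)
The plan is to show that the ``bad'' event in Assumption~\ref{ass:tail} fails with the required probability by proving a sub-exponential concentration bound on the partial sum $\sum_{X_i^{(k)}\in\chi_h} W_d^{(k)}(X_i^{(k)},\bbeta,\bbeta')$ for each fixed pair $(\bbeta,\bbeta')$, and then taking a union bound over all such pairs. Concretely, I would first reduce to analyzing $\sum_{i: T_i^{(k)}\in I_h}(W_{d,i}^{(k)} - \mathbb{E}[W_{d,i}^{(k)}\mid T_i^{(k)}])$. By part~1 of Assumption~\ref{ass:expo_design}, $W_{d,i}^{(k)} = g_1(T_i^{(k)})h(X_i^{(k)}) - g_2(T_i^{(k)})$, so conditionally on the design point, the centered summand is a scaled sub-exponential random variable with parameters governed by $\nu^2(T_i^{(k)})g_1^2(T_i^{(k)})$ and $g_1(T_i^{(k)})b(T_i^{(k)})$, which part~3 bounds almost surely by $O(d^{-2r})$ and $O(d^{-r})$ respectively. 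A standard Bernstein/sub-exponential tail inequality for sums of (conditionally) independent sub-exponential variables then gives, for deviations of order $t$,
\begin{align*}
\mathbb{P}\Big(\Big|\sum_{i: T_i^{(k)}\in I_h}(W_{d,i}^{(k)}-\mathbb{E}[W_{d,i}^{(k)}\mid T_i^{(k)}])\Big| > t\Big) \leq 2\exp\Big(-c\min\Big(\frac{t^2}{n\cdot d^{-2r}},\frac{t}{d^{-r}}\Big)\Big),
\end{align*}
where I use that only about $n/d$ of the $n$ observations land in $I_h$ but crudely bounding the number of contributing terms by $n$ is harmless here (and in fact part~4's lower bound $c_1 d^{-1-2r}\le \mathbb{E}[\nu^2(T_i^{(k)})g_1^2(T_i^{(k)})\mathbbm{1}_{T_i^{(k)}\in I_h}]$ ensures the variance proxy is not too small, matching the scale $a_n$).

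Next I would choose the deviation level so that the bound matches $a_n = C\eps d^{-1/2-r}\sqrt{n\log N}$. Since $\mathbb{E}[W_{d,i}^{(k)}\mid T_i^{(k)}]$ varies by only $O(d^{-r})$ across $t\in[0,1]$ (part~3) and $\mathbb{E}[W_{d,i}^{(k)}] = O(d^{-2r})$ (part~4), the total conditional-mean contribution over the roughly $n/d$ relevant observations is $O(n d^{-1-r})$, which is of smaller order than $a_n$ under the scaling of the problem (recall $d \asymp N^{1/(1+2r)}$-type scales), so it is absorbed. Plugging $t \asymp a_n = C\eps d^{-1/2-r}\sqrt{n\log N}$ into the tail bound, the Gaussian branch gives exponent $-c\, \eps^2 d^{-1-2r} n\log N / (n d^{-2r}) = -c\,\eps^2 d^{-1}\log N$, which is too weak; so instead I would keep the count of contributing terms as its true order $\asymp n/d$, giving variance proxy $\asymp (n/d)d^{-2r} = n d^{-1-2r}$, and then the Gaussian branch exponent becomes $-c\,\eps^2 (n d^{-1-2r}\log N)/(n d^{-1-2r}) = -c\,\eps^2\log N$, i.e. probability $\le N^{-c\eps^2}$. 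Taking the constant $C$ in $a_n$ large enough (equivalently choosing the constants appropriately) makes this $o(N^{-4})$, which after the union bound over the $\le 4^d$ pairs $(\bbeta,\bbeta')$ and using $d \lesssim N^{1/(1+2r)} \ll \log N$... — here is where the hypothesis $m\le n^\gamma$ with $\gamma<2r$ enters: we need $N^{-c\eps^2}\cdot 4^d = o(m^{-2}d^{-2})$, i.e. $c\eps^2\log N - d\log 4 \gtrsim 2\log m + 2\log d$, and since $\log m \le \gamma \log n \le \gamma \log N$ while the first term scales like $\log N$ with a constant we control, the condition $\gamma<2r$ provides the slack needed to dominate. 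I would also need to verify the linear (sub-exponential) branch of the Bernstein bound is not the binding one at this deviation level, which follows from $a_n/d^{-r} = C\eps\sqrt{n\log N}\,d^{-1/2} \gg 1$ being large compared to $(a_n)^2/(nd^{-1-2r})$ only when... actually the linear branch gives exponent $-c a_n/d^{-r} = -c\eps\sqrt{n\log N/d}$ which is even larger, so it is fine.

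The main obstacle I anticipate is bookkeeping the union bound: the sieve has $2^d$ elements and hence $\binom{2^d}{2}\le 4^d$ pairs, so the per-pair tail probability must beat $4^d$ by a comfortable margin, and simultaneously beat $m^2 d^2$; reconciling $d\log 4$ (exponentially many pairs but $d$ itself only polylogarithmic or polynomial-small in $N$) against $c\eps^2\log N$ and $2\gamma\log n$ is exactly what forces the quantitative condition $\gamma<2r$, and getting the constants to line up — especially making sure the ``large $C$'' in the definition of $a_n$ does the work without conflicting with the constants hidden in the $O(\cdot)$ of Assumption~\ref{ass:expo_design} — is the delicate part. A secondary technical point is handling the randomness of how many design points $T_i^{(k)}$ fall in $I_h$: I would condition on the design, prove the bound for any realization with at most, say, $2n/d$ points in $I_h$ (or simply bound the number of terms by $n$ while keeping the sharper variance proxy via the almost-sure bounds, which avoids needing a separate concentration argument for the design counts), and absorb the rare event that too many points land in $I_h$ into the $o(m^{-2}d^{-2})$ error. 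Once these pieces are assembled, $\mathbb{P}_{\beta_h}(E_h^{(k)}(a_n)) = 1 - o(m^{-2}d^{-2})$ follows, which is Assumption~\ref{ass:tail}.
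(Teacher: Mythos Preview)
Your concentration argument is essentially the right one and matches the paper's approach (conditioning on the design, using the sub-exponential structure of $W_{d,i}^{(k)}\mid T_i^{(k)}$, and controlling the bin count $|\{i:T_i^{(k)}\in I_h\}|\approx n/d$ via Chernoff). However, the proposal contains a genuine gap in the union-bound step that would make the argument fail as written.

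You propose to union-bound over the $\le 4^d$ pairs $(\bbeta,\bbeta')$, and then claim $d\lesssim N^{1/(1+2r)}\ll\log N$. This is false: $d\asymp \Nbass^{1/(1+2r)}$ is \emph{polynomial} in $N$, not sub-logarithmic. Thus $4^d$ is of order $\exp(cN^{1/(1+2r)})$, which completely swamps the tail bound $N^{-c\eps^2}=\exp(-c\eps^2\log N)$. No choice of constants can repair this.

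The missing observation is that, because the wavelets $\psi_{j_n,h}$ in the sieve have pairwise disjoint supports, for $T_i^{(k)}\in I_h$ one has $\fbb(T_i^{(k)})=\eps d^{-(r+1/2)}\beta_h\psi_{j_n,h}(T_i^{(k)})$, which depends only on $\beta_h$. Hence $\sum_{X_i^{(k)}\in\chi_h}W_d^{(k)}(X_i^{(k)},\bbeta,\bbeta')$ depends only on the single coordinates $(\beta_h,\beta_h')\in\{-1,1\}^2$, not on the full vectors. The maximum in the definition of $E_h^{(k)}(a_n)$ is therefore over just four values, and the union bound contributes only a factor of $4$. This is exactly how the paper proceeds.

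Relatedly, you misidentify where the hypothesis $m\le n^\gamma$, $\gamma<2r$, enters. It is not needed to beat a $4^d$ union bound (which, as noted, is the wrong count). It is used to show that the bin-count event $\{\,n/(2d)\le |\{i:T_i^{(k)}\in I_h\}|\le 2n/d\,\}$ fails with probability $o(m^{-2}d^{-2})$: Chernoff gives $\exp(-cn/d)$, and since $d\le N^{1/(1+2r)}\le n^{(1+\gamma)/(1+2r)}$ one has $n/d\ge n^{(2r-\gamma)/(1+2r)}\to\infty$ precisely when $\gamma<2r$, which is what makes $m^2d^3\exp(-cn/d)\to 0$. Once you fix the union bound to four pairs and place the $\gamma<2r$ condition here, the rest of your outline goes through.
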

The proof is deferred to Section \ref{sec:proof-sufficient:lower}.

\subsubsection{Sufficient conditions for upper bound} \label{sec:sufficient:UB}
For model \eqref{eq:model:reg}, we establish the upper bounds based on the assumptions below.

\begin{assumption}\label{ass:UB-suff-cond} 
Assume that for $j\leq j_n$ and $0\leq h\leq 2^j-1$,
\begin{align*}
    \hat{f}_{n,jh}^{(k)} = \frac{1}{n}\sum_{i=1}^n h(X_i^{(k)})\psi_{jh}(T_i^{(k)}),
\end{align*}
where the function $h(X)$ satisfies
\begin{enumerate}
    \item $\mathbb{E}[h(X) \mid T] = f(T)$,
    \item for $q = \lceil 3/r + 6 \rceil$, $\max_{t \in [0,1]}\mathbb{E}[|h(X)|^q \mid T=t] < \infty$.
\end{enumerate}
\end{assumption}
The first condition on $h$ is necessary to construct unbiased estimators, while the second is needed to establish a tail bound that ensures that the truncation of the estimators has an asymptotically negligible effect. For many models, we expect the density $p(x \mid f(t))$ to be continuous in $t$, in which case $\mathbb{E}[|h(x)|^q \mid T=t]$ is also continuous in $t$. Therefore the maximum for $t\in [0,1]$ being bounded is equivalent to the expectation being finite for any $t \in [0,1]$.

\begin{lem} \label{lem:upper-bound-assumption-consequences}
Assumption \ref{ass:UB-suff-cond} implies Assumption \ref{ass:UB}.
\end{lem}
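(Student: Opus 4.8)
The plan is to verify directly that the two conditions of Assumption \ref{ass:UB} follow from Assumption \ref{ass:UB-suff-cond}, working with the explicit estimator $\hat{f}_{n,jh}^{(k)} = \frac{1}{n}\sum_{i=1}^n h(X_i^{(k)})\psi_{jh}(T_i^{(k)})$. For the unbiasedness and variance bound, I would first condition on the design points $T_i^{(k)}$ and use $\mathbb{E}[h(X_i^{(k)}) \mid T_i^{(k)}] = f(T_i^{(k)})$ together with $T_i^{(k)}\sim U(0,1)$, so that $\mathbb{E}[\hat{f}_{n,jh}^{(k)}] = \mathbb{E}[f(T_i^{(k)})\psi_{jh}(T_i^{(k)})] = \int_0^1 f(t)\psi_{jh}(t)\,dt = f_{jh}$, the wavelet coefficient. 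For the variance, since the summands are i.i.d.\ the variance is $\frac{1}{n}\mathrm{var}(h(X_1^{(k)})\psi_{jh}(T_1^{(k)}))$, which is bounded by $\frac{1}{n}\mathbb{E}[h(X_1^{(k)})^2\psi_{jh}(T_1^{(k)})^2]$; conditioning on $T$ and using the $q$th moment bound with $q\geq 2$ (note $q = \lceil 3/r+6\rceil \geq 7$) gives $\mathbb{E}[h(X)^2 \mid T=t] \leq (\max_t \mathbb{E}[|h(X)|^q\mid T=t])^{2/q} =: M$, so the conditional expectation is at most $M\psi_{jh}(t)^2$, and $\int_0^1 \psi_{jh}(t)^2\,dt = 1$ by orthonormality of the wavelet basis. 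This yields $\mathrm{var}(\hat{f}_{n,jh}^{(k)}) \leq \sigma^2/n$ with $\sigma^2 = M$ independent of $j,h$, establishing part 1.

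For part 2, the tail bound, the key is to control $\mathbb{P}_f(\hat{f}_{n,jh}^{(k)} > \sqrt{N})$. Since $\hat{f}_{n,jh}^{(k)}$ is a centered (after subtracting $f_{jh}$) average of i.i.d.\ bounded-in-$L_q$ terms, and $f_{jh}$ itself is bounded (because $f\in B_{2,2}^r(L)$ implies $|f_{jh}|\lesssim 2^{-j(r+1/2)}$, certainly $O(1)$), the event $\{\hat{f}_{n,jh}^{(k)} > \sqrt{N}\}$ forces the centered average to exceed $\sqrt{N}/2$ for large $N$. I would apply a Markov/Rosenthal-type inequality: by Markov applied to the $q$th power of the centered sum $S_n = \sum_{i=1}^n (h(X_i^{(k)})\psi_{jh}(T_i^{(k)}) - f_{jh})$, and the Rosenthal inequality for sums of independent centered random variables, $\mathbb{E}|S_n|^q \lesssim n^{q/2}(\mathbb{E}|Z_1|^2)^{q/2} + n\,\mathbb{E}|Z_1|^q$ where $Z_i$ are the centered summands; using the uniform $q$th moment bound on $h$ together with $\sup_{jh}\|\psi_{jh}\|_\infty \lesssim 2^{j/2}$ and the restriction $j\leq j_n \leq \log_2(N)/(1+2r)$ to keep $\|\psi_{jh}\|_\infty^q \lesssim 2^{jq/2} \lesssim N^{q/(2(1+2r))}$ polynomially bounded, I get $\mathbb{P}(\hat{f}_{n,jh}^{(k)} > \sqrt{N}) \leq \mathbb{P}(|S_n| > n\sqrt{N}/2) \lesssim \frac{\mathbb{E}|S_n|^q}{(n\sqrt{N}/2)^q} \lesssim \frac{n^{q/2}\cdot N^{q/(2(1+2r))} + n\cdot N^{q/(2(1+2r))}}{n^q N^{q/2}}$. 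Since $n \leq N$, this decays polynomially in $N$ like $N^{-q/2 + q/(2(1+2r)) + 1}$ up to logarithmic factors, and the choice $q = \lceil 3/r+6\rceil$ is precisely engineered so that this exponent is below $-(3+4r)/(1+2r) - 1$, giving the required $o(m^{-1}N^{-(3+4r)/(1+2r)})$ (absorbing the extra $m^{-1}$ factor into the power savings since $m\leq N$).

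The main obstacle is getting the bookkeeping on the exponents right in part 2: one must carefully track how $\|\psi_{jh}\|_\infty^q$ grows with the resolution level, how the Rosenthal bound's two terms compare, and verify that $q = \lceil 3/r + 6 \rceil$ indeed makes the resulting power of $N$ strictly smaller than $-(3+4r)/(1+2r)$ (plus the implicit margin needed for the $o(\cdot)$ and the $m^{-1}$); the worst case is small $r$, where the wavelet sup-norm blows up fastest relative to the sample size and the target exponent $(3+4r)/(1+2r)\to 3$, which is exactly why the "$3/r$" term appears in the definition of $q$. I would handle this by first reducing to the case $n = 1$ per-observation moment, pulling out the deterministic $\psi_{jh}(T_i^{(k)})$ factor's sup norm, and then comparing exponents. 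The remaining steps — unbiasedness, the orthonormality computation, and bounding $|f_{jh}|$ — are routine and can be stated briefly.
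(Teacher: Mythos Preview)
Your Part 1 is correct and matches the paper's argument essentially verbatim.

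For Part 2, your Rosenthal/Markov strategy is sound in principle (the paper uses the cruder Minkowski inequality $\|\hat f_{n,jh}^{(k)}\|_q\le \|h(X_1)\psi_{jh}(T_1)\|_q$, which discards the averaging entirely and is simpler), but your exponent bookkeeping has a genuine gap. When you ``pull out the sup norm'' you are bounding
\[
\mathbb{E}\bigl[|h(X_1)\psi_{jh}(T_1)|^q\bigr]\;\le\; M\,\|\psi_{jh}\|_\infty^{\,q}\;\lesssim\; 2^{jq/2}\;\lesssim\; N^{q/(2(1+2r))}.
\]
Carrying this through Markov (worst case $n=1$, $m=N$) gives the probability bound $N^{-qr/(1+2r)}$, and to make this $o(m^{-1}N^{-(3+4r)/(1+2r)})=o(N^{-(4+6r)/(1+2r)})$ you would need $q>4/r+6$, which the prescribed $q=\lceil 3/r+6\rceil$ does \emph{not} satisfy for moderate or small $r$ (e.g.\ $r=1$ gives $q=9$ but you need $q>10$). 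So your claim that ``$q=\lceil 3/r+6\rceil$ is precisely engineered'' for your exponent is incorrect as written.

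The missing ingredient is that you should not pull out all $q$ powers of the sup norm: since $\int_0^1\psi_{jh}^2=1$ (or equivalently since $\psi_{jh}$ has support of length $\asymp 2^{-j}$), one has
\[
\mathbb{E}\bigl[|\psi_{jh}(T_1)|^q\bigr]\;\le\;\|\psi_{jh}\|_\infty^{\,q-2}\int_0^1\psi_{jh}^2\;=\;\|\psi_{jh}\|_\infty^{\,q-2}\;\lesssim\; N^{(q-2)/(2(1+2r))},
\]
which is exactly the estimate the paper uses. This saves a factor $N^{1/(1+2r)}$, turning the probability bound into $N^{-(qr+1)/(1+2r)}$, and then the condition becomes $q>3/r+6$, which is the one that $q=\lceil 3/r+6\rceil$ is actually engineered to satisfy. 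With this single fix your argument goes through; the Rosenthal step is otherwise unnecessary overhead, since the paper's one-line Minkowski bound already reduces to the single-term $q$th moment.
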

The proof of the lemma is given in Section \ref{sec:proof-sufficient:upper}

\section{Examples}\label{sec:examples}
We apply our abstract theorem over various nonparametric models, including nonparametric regression, density estimation, binary regression, Poisson regression and heteroscedastic regression. We consider the case where each machine has the same budget $B=B^{(1)}=...=B^{(m)}$ and show that in each case the $L_2$-minimax estimation rate for $f\in B_{22}^r(L)$ is $\eps_{n,m,B}=\Nbass^{-r/(1+2r)}$, which can be rewritten (up to a logarithmic factor) as
\begin{align} \label{def:eps}
\eps_{n,m,B}&= \begin{cases} N^{-\frac{r}{1+2r}}, & B \geq N^{1/(1+2r)}/\log{N}, \\
\left(NB\log{N}\right)^{-\frac{r}{2+2r}}, & \left(n\log{N}\right)^{1/(1+2r)}/m \leq B < N^{1/(1+2r)}/\log{N}, \\
(n\log{N})^{-\frac{r}{1+2r}}, & B < \left(n\log{N}\right)^{1/(1+2r)}/m. \end{cases}
\end{align}

\subsection{Nonparametric regression}
Assume that at each machine $k$ we observe a sample $(T_1^{(k)},X_1^{(k)})$, $(T_2^{(k)},X_2^{(k)})$, ..., $(T_n^{(k)},X_n^{(k)})$, $k=1,...,m$, satisfying
\begin{align}
    X_i^{(k)} \mid T_i^{(k)} &\overset{ind}{\sim} N(f(T_i^{(k)}), 1),\quad T_i^{(k)} \overset{iid}{\sim} \text{Unif}(0,1),\quad i=1,...,n,\label{def:regression}
\end{align}
and our goal is to recover the underlying functional parameter $f\in B_{22}^r(L)$ of interest.

Minimax lower bounds and matching upper bounds for this model were already derived in \cite{szabo_adaptive_2019}. We re-derive the same bounds using our general results. This is a fairly simple model to analyze because the likelihood ratio $W_d^{(k)}$ itself has a Gaussian distribution whose expectation and tails are easily controlled. The minimax rate for this model is given in the following proposition and the corresponding proof is deferred to Section \ref{sec: proof:prop:regression}.

\begin{prop} \label{prop:nonparametric-regression}
The minimax estimation rate in the distributed nonparametric regression model  \eqref{def:regression} over the class of distributed estimators $\mathcal{F}_{\text{dist}}(\boldsymbol{B})$ is bounded from below by $ \eps_{n,m,B}$, given in $\eqref{def:eps}$. This minimax rate is achieved for $m\leq n^{\gamma}$, $\gamma<2r$, by the distributed estimator $\hat{f}_{n,m}$, given in \eqref{eq: block:est}, up to a logarithmic factor, i.e.
\begin{align*}
   \sup_{f\in B_{22}^r(L)}\mathbb{E}_{f} \|\hat{f}_{n,m}-f\|_2\leq \log (N) \eps_{n,m,B}.
\end{align*}
\end{prop}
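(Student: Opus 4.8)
The plan is to derive Proposition \ref{prop:nonparametric-regression} as a direct application of the two abstract results, Theorem \ref{thm:gen_lower_bound_thm} and Theorem \ref{thm:gen_upper_bound_thm}, by verifying the relevant assumptions in the Gaussian regression model \eqref{def:regression}, together with the restatement of $\Nbass^{-r/(1+2r)}$ as $\eps_{n,m,B}$ via Proposition \ref{cor:Nbass-symmetric}. First I would handle the lower bound. For two sieve elements $f_{\bbeta}+C_0, f_{\bbeta'}+C_0$ (here $C_0=0$, since no positivity constraint is needed) the single-observation log-likelihood ratio in the Gaussian location model is
\begin{align*}
W_d^{(k)}(X_i^{(k)},T_i^{(k)},\bbeta,\bbeta') = \big(X_i^{(k)} - f_{\bbeta'}(T_i^{(k)})\big)\big(f_{\bbeta}(T_i^{(k)}) - f_{\bbeta'}(T_i^{(k)})\big) - \tfrac12 \big(f_{\bbeta}(T_i^{(k)}) - f_{\bbeta'}(T_i^{(k)})\big)^2,
\end{align*}
which already has the exponential-family form required by Assumption \ref{ass:expo_design}(1) with $h(x)=x$, $g_1(t)=f_{\bbeta}(t)-f_{\bbeta'}(t)$, $g_2(t)=\tfrac12 g_1(t)^2 + f_{\bbeta'}(t)g_1(t)$; note $g_1$ is supported in a single bin and satisfies $\|g_1\|_\infty \lesssim \eps d^{-r}$ because $\|\psi_{j,h}\|_\infty \asymp d^{1/2}$. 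Then $h(X_i^{(k)})\mid T_i^{(k)} = X_i^{(k)}\mid T_i^{(k)} \sim N(f(T_i^{(k)}),1)$ is sub-Gaussian, hence $SE(\nu^2,b)$ with $\nu^2$ bounded and $b=0$ (or an arbitrarily small constant), giving Assumption \ref{ass:expo_design}(2). The almost-sure bounds in \ref{ass:expo_design}(3) follow from $\mathbb{E}[W_{d,i}^{(k)}\mid T=t] = f(t)g_1(t) - g_2(t)$ combined with $\|g_1\|_\infty \lesssim \eps d^{-r}$, $|f(t)|$ bounded; $\nu^2 g_1^2 \lesssim \eps^2 d^{-2r}$; and $g_1 b \equiv 0$. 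For \ref{ass:expo_design}(4), $\mathbb{E}[W_{d,i}^{(k)}] = \mathbb{E}[\text{KL-type term}] = \tfrac12\mathbb{E}[g_1(T)^2] \lesssim \eps^2 d^{-1-2r}\cdot d^{-1}\cdot d = \eps^2 d^{-2r}$ (integrating $g_1^2$, which lives on one bin of width $1/d$ and has sup-norm squared $\asymp \eps^2 d^{-2r}\cdot d = \eps^2 d^{-1-2r}\cdot d$ — I would be careful with the normalization here), and the lower bound $\mathbb{E}[\nu^2(T)g_1^2(T)\mathbbm{1}_{T\in I_h}] \gtrsim d^{-1-2r}$ follows from $\nu^2$ bounded below and $\int_{I_h} g_1^2 \gtrsim \eps^2 d^{-1-2r}$ using $\|\psi_{j,h}\|_2 = 1$. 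With $m\le n^\gamma$, $\gamma < 2r$, Lemma \ref{lem:lower-bound-sufficient-condition} then gives Assumption \ref{ass:tail}, Assumption \ref{ass:structural} holds automatically for model \eqref{eq:model:reg} as noted in the text, and Assumption \ref{ass:W-expect-control} follows from \ref{ass:expo_design}(4); Theorem \ref{thm:gen_lower_bound_thm} yields $\inf\sup \mathbb{E}_f\|\hat f - f\|_2^2 \gtrsim \Nbass^{-2r/(1+2r)}$, and taking square roots plus Proposition \ref{cor:Nbass-symmetric} gives the $\eps_{n,m,B}$ lower bound (in $L_2$ rather than squared $L_2$, which is what the proposition states via Jensen).

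For the upper bound I would invoke Lemma \ref{lem:upper-bound-assumption-consequences}: it suffices to check Assumption \ref{ass:UB-suff-cond} for the estimator $\hat f_{n,jh}^{(k)} = \frac1n\sum_i h(X_i^{(k)})\psi_{jh}(T_i^{(k)})$ with $h(x)=x$. Unbiasedness given $T$, $\mathbb{E}[X\mid T]=f(T)$, is immediate. The moment condition $\max_t \mathbb{E}[|X|^q \mid T=t] < \infty$ for $q = \lceil 3/r+6\rceil$ holds trivially since $X\mid T=t \sim N(f(t),1)$ has all moments and $f$ is bounded on $[0,1]$ (being in $B_{2,2}^r$ with $r>0$, hence in a Hölder space and continuous). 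Lemma \ref{lem:upper-bound-assumption-consequences} then gives Assumption \ref{ass:UB}, and Theorem \ref{thm:gen_upper_bound_thm} gives $\sup_f \mathbb{E}_f\|\hat f_{n,m}-f\|_2^2 \lesssim (\Nbass/\log N)^{-2r/(1+2r)}$. Converting via Proposition \ref{cor:Nbass-symmetric} and pulling the $\log N$ out of the rate, then bounding $\mathbb{E}_f\|\hat f_{n,m}-f\|_2 \le (\mathbb{E}_f\|\hat f_{n,m}-f\|_2^2)^{1/2}$ by Jensen, produces $\sup_f \mathbb{E}_f\|\hat f_{n,m}-f\|_2 \le \log(N)\eps_{n,m,B}$, matching (up to the log factor) the lower bound and completing the proof.

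The main obstacle is not conceptual — the Gaussian model is the easiest of the examples, as the text remarks — but bookkeeping: I must track the precise normalization of the sieve, namely that $f_{\bbeta}(t) = \eps d^{-(r+1/2)}\sum_h \beta_h \psi_{jh}(t)$ with $d \asymp 2^j$, so that $\|\psi_{jh}\|_\infty \asymp 2^{j/2} \asymp d^{1/2}$, giving the pointwise bound $|g_1(t)| \lesssim \eps d^{-(r+1/2)} d^{1/2} = \eps d^{-r}$, while $\|g_1\|_2^2 \asymp \eps^2 d^{-(2r+1)}$ since each $\psi_{jh}$ has unit $L_2$ norm and the supports are disjoint. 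Feeding these into Assumption \ref{ass:expo_design}(3)--(4) must reproduce exactly the $O(d^{-r})$, $O(d^{-2r})$, $O(d^{-1-2r})$ scalings demanded there; a sign slip or an off-by-$d$ in the normalization would be the easiest way to go wrong. A secondary point worth stating carefully is the $m \le n^\gamma$ with $\gamma < 2r$ hypothesis: it enters only through Lemma \ref{lem:lower-bound-sufficient-condition} (to get the $o(m^{-2}d^{-2})$ probability in Assumption \ref{ass:tail} from the sub-exponential tail bound), and I would make sure the statement of the proposition inherits exactly this condition, as it does.
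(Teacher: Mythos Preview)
Your proposal is correct and follows essentially the same route as the paper: verify Assumption~\ref{ass:expo_design} (and invoke Lemma~\ref{lem:lower-bound-sufficient-condition}) for the lower bound, and verify Assumption~\ref{ass:UB-suff-cond} (via Lemma~\ref{lem:upper-bound-assumption-consequences}) for the upper bound, both with $h(x)=x$.

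The only cosmetic difference is that the paper does not carry the general $\bbeta,\bbeta'$ formula. Since on each bin $I_h$ the only nontrivial case is $\beta_h'=-\beta_h$, the paper writes directly $W_{d,i}^{(k)}=2X_i^{(k)}f_{\bbeta}(T_i^{(k)})$, so that $g_1=2f_{\bbeta}$ and $g_2=0$; this makes the conditional mean $\mathbb{E}[W_{d,i}^{(k)}\mid T]=2f_{\bbeta}^2(T)$ immediate and the bookkeeping you flagged cleaner. Your more general decomposition collapses to the same thing (your $g_2=\tfrac12 g_1^2+f_{\bbeta'}g_1$ vanishes when $f_{\bbeta'}=-f_{\bbeta}$), so the computations agree. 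One small slip to fix: $g_1=f_{\bbeta}-f_{\bbeta'}$ is supported on \emph{all} bins where $\beta_h\neq\beta_h'$, not a single bin; your sup-norm bound $\|g_1\|_\infty\lesssim\eps d^{-r}$ is nonetheless correct because the wavelets have disjoint supports, so this does not affect the argument.
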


\subsection{Density estimation}
In the distributed density estimation problem in each machine $k=1,...,m$ we observe an iid sample 
\begin{align}
X_i^{(k)} \stackrel{iid}{\sim} f,\quad i=1,...,n,\label{def:density}
\end{align}
 from an unknown density $f\in B_{22}^r(L)$ of interest. This model, unlike the other examples, can not be formulated as in \eqref{eq:model:reg} in a natural way. Therefore, in the proof of the proposition, deferred to Section \ref{sec: proof:prop:density}, we verify the original Assumptions \ref{ass:W-expect-control}-\ref{ass:tail}.

\begin{prop} \label{prop:density-estimation}
The minimax estimation rate in the distributed density estimation model \eqref{def:density} over the class of distributed estimators $\mathcal{F}_{\text{dist}}(\boldsymbol{B})$ is bounded from below by $ \eps_{n,m,B}$, given in $\eqref{def:eps}$ and the estimator $\hat{f}_{n,m}$ given in \eqref{eq: block:est} achieves this optimal rate up to a logarithmic factor, i.e.
\begin{align*}
   \sup_{f\in B_{22}^r(L)}\mathbb{E}_{f} \|\hat{f}_{n,m}-f\|_2\leq \log (N) \eps_{n,m,B}.
\end{align*}
\end{prop}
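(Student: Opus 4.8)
The plan is to verify the abstract hypotheses of Theorems \ref{thm:gen_lower_bound_thm} and \ref{thm:gen_upper_bound_thm} directly for the density model \eqref{def:density}, rather than going through Lemmas \ref{lem:lower-bound-sufficient-condition} and \ref{lem:upper-bound-assumption-consequences}, which do not apply since \eqref{def:density} is not of the regression-type form \eqref{eq:model:reg}. For the lower bound I would use the disjoint-support sieve \eqref{def: sieve_disjoint} with $C_0 = 1$, i.e.\ the candidate densities $p_f = 1 + \fbb$ on $[0,1]$, with resolution chosen so that $d \asymp \Nbass^{1/(1+2r)}$, and check Assumptions \ref{ass:W-expect-control}, \ref{ass:structural}, \ref{ass:tail}. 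For the upper bound I would verify Assumption \ref{ass:UB} for the empirical wavelet-coefficient estimator $\hat f_{n,jh}^{(k)} = n^{-1}\sum_{i=1}^n \psi_{jh}(X_i^{(k)})$ and apply Theorem \ref{thm:gen_upper_bound_thm}.

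\emph{Sieve and the two easy assumptions.} Because the $\psi_{jh}$ have vanishing moments, $\int_0^1\fbb = 0$, so $p_f$ integrates to one; since $\Vert\fbb\Vert_\infty \asymp \eps d^{-r}\to 0$, it is nonnegative once $d$ is large; and $\Vert\fbb\Vert_{B_{2,2}^r}\asymp\eps$, so a sufficiently small constant $\eps$ keeps the sieve inside $B_{2,2}^r(L)$. Choosing the partition $I_1,\dots,I_d$ to consist of equal-length intervals $|I_h| = 1/d$, each still containing $\mathrm{supp}(\psi_{jh})$ (possible since the constant $c$ in $d = c2^j$ is small), Assumption \ref{ass:structural} holds: $\mathbb{P}_\bbeta(X\in I_h) = 1/d + \eps d^{-(r+1/2)}\beta_h\int_{I_h}\psi_{jh} = 1/d$, and conditionally on $\{X\in I_h\}$ only the $h$-th wavelet contributes, so the density is proportional to $1 + \eps d^{-(r+1/2)}\beta_h\psi_{jh}$ and depends on $\beta_h$ alone. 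Assumption \ref{ass:W-expect-control} amounts to $\mathrm{KL}(p_\fbb\Vert p_{\fbbp})\le C\eps^2 d^{-2r}$; bounding the KL divergence by the $\chi^2$ divergence and using $p_{\fbbp}\ge 1/2$ gives $\mathrm{KL}\le 2\int_0^1(\fbb-\fbbp)^2 = 2\eps^2 d^{-(2r+1)}\sum_{h=1}^d(\beta_h-\beta_h')^2\le 8\eps^2 d^{-2r}$ by orthonormality of the $\psi_{jh}$.

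\emph{The tail assumption --- the main obstacle.} On $\chi_h = I_h$ only $\psi_{jh}$ survives, so $\max_{\bbeta,\bbeta'}\sum_{X_i^{(k)}\in I_h}W_d^{(k)}(X_i^{(k)},\bbeta,\bbeta')$ is dominated by $2\max_{\sigma\in\{-1,1\}}\bigl|\sum_{i=1}^n\log\bigl(1+\sigma\eps d^{-(r+1/2)}\psi_{jh}(X_i^{(k)})\bigr)\bigr|$, a sum of $n$ i.i.d.\ bounded summands. The crucial observation is that the \emph{linear} term of the logarithm integrates to zero ($\int\psi_{jh} = 0$), so the mean of this sum is only $O(\eps^2 n d^{-(2r+1)})$ while its standard deviation is of order $\eps\sqrt{n}\,d^{-(r+1/2)}$, and the threshold $a_n = C\eps d^{-1/2-r}\sqrt{n\log N}$ is exactly a $\sqrt{\log N}$-multiple of that standard deviation. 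Since each summand is bounded by $O(\eps d^{-r})$ with variance $O(\eps^2 d^{-(2r+1)})$, Bernstein's inequality gives $\mathbb{P}(\,\cdot\, > a_n)\le 2\exp(-cC^2\log N) = 2N^{-cC^2}$, the sub-exponential correction term in the Bernstein bound being negligible because $d\asymp\Nbass^{1/(1+2r)}\lesssim(nm)^{1/(1+2r)}\ll n$; this last inequality uses the mild condition $m\le n^\gamma$, $\gamma<2r$ (needed only in the large-budget regime, as in the other examples, and automatic when the budget is small). Taking $C$ large makes $2N^{-cC^2} = o(m^{-2}d^{-2})$, so Assumption \ref{ass:tail} holds, and Theorem \ref{thm:gen_lower_bound_thm} yields $\inf_{\hat f}\sup_f\mathbb{E}_f\Vert\hat f - f\Vert_2^2\gtrsim\Nbass^{-2r/(1+2r)} = \eps_{n,m,B}^2$; since the Fano/Assouad argument underlying that theorem in fact lower-bounds $\sup_f\mathbb{P}_f(\Vert\hat f - f\Vert_2\ge c\,\eps_{n,m,B})$ by a positive constant, the minimax risk is bounded below at rate $\eps_{n,m,B}$ as stated.

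\emph{Upper bound.} For Assumption \ref{ass:UB} take $\hat f_{n,jh}^{(k)} = n^{-1}\sum_{i=1}^n\psi_{jh}(X_i^{(k)})$: then $\mathbb{E}_f\hat f_{n,jh}^{(k)} = \int_0^1 f\psi_{jh} = f_{jh}$ and $\mathrm{var}(\hat f_{n,jh}^{(k)})\le n^{-1}\int_0^1\psi_{jh}^2 f\le n^{-1}\Vert f\Vert_\infty =: \sigma^2/n$, where $\Vert f\Vert_\infty$ is bounded (automatic from the Besov embedding when $r>1/2$, otherwise a harmless additional assumption on the density). For the truncation probability, $|\psi_{jh}(X_i^{(k)})|\le\Vert\psi_{jh}\Vert_\infty\asymp 2^{j/2}\le\sqrt N$ for $j\le j_n$, while $f_{jh} = O(1)$ and $\mathrm{var}\le\Vert f\Vert_\infty/n$, so Bernstein's inequality shows $\mathbb{P}_f(\hat f_{n,jh}^{(k)} > \sqrt N)$ is super-polynomially small in $N$, in particular $o(m^{-1}N^{-(3+4r)/(1+2r)})$. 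Theorem \ref{thm:gen_upper_bound_thm} then gives $\sup_f\mathbb{E}_f\Vert\hat f_{n,m} - f\Vert_2^2\lesssim(\Nbass/\log N)^{-2r/(1+2r)}$, and by Jensen's inequality $\mathbb{E}_f\Vert\hat f_{n,m}-f\Vert_2\le(\Nbass/\log N)^{-r/(1+2r)} = (\log N)^{r/(1+2r)}\eps_{n,m,B}\le\log(N)\,\eps_{n,m,B}$, which is the assertion of Proposition \ref{prop:density-estimation}. In the small-budget regime $B < (n\log N)^{1/(1+2r)}/m$ one may instead run a standard wavelet density estimator on the central machine's local sample, which already attains $n^{-r/(1+2r)}\le\log(N)\,\eps_{n,m,B}$.
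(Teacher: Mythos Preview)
Your proof is correct and follows the same overall plan as the paper: verify Assumptions \ref{ass:W-expect-control}--\ref{ass:tail} directly on the sieve $p_{\fbb}=1+\fbb$ for the lower bound, and Assumption \ref{ass:UB} for the empirical wavelet-coefficient estimator $\hat f_{n,jh}^{(k)}=n^{-1}\sum_i\psi_{jh}(X_i^{(k)})$ for the upper bound. The technical execution differs in two places. For Assumption \ref{ass:W-expect-control} you bound the KL divergence by the $\chi^2$ divergence, which is quicker than the paper's direct expansion via Lemma \ref{lem:log-ratio-bound}. For Assumption \ref{ass:tail} you apply Bernstein's inequality in one step to the full sum $\sum_{i=1}^n\log\bigl(1+\sigma\eps d^{-(r+1/2)}\psi_{jh}(X_i^{(k)})\bigr)$, exploiting that most summands vanish so that the total variance is $O(n\eps^2 d^{-(2r+1)})$; the paper instead first conditions on the event $D_h^{(k)}=\{n/(2d)\le|\{i:X_i^{(k)}\in I_h\}|\le 2n/d\}$ (its complement handled by Chernoff) and then applies Lemma \ref{lem:gen-prob-lemma} with $\alpha=2$ to the $\approx n/d$ nonzero terms, obtaining the stronger bound $\exp(-C'd^{2r}\log N)$. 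Both routes rely on essentially the same condition $d\log N\lesssim n$ (equivalently $m\le n^\gamma$, $\gamma<2r$, in the large-budget regime --- the paper uses it implicitly through the Chernoff step even though the proposition does not display it), and both deliver the required $o(m^{-2}d^{-2})$. Your one-shot Bernstein argument is more direct; the paper's two-stage conditioning is what its general Lemma \ref{lem:gen-prob-lemma} is tailored for. For the upper-bound truncation probability the paper uses Hoeffding (since $\|\psi_{jh}\|_\infty\lesssim N^{1/(2(1+2r))}\ll\sqrt N$) rather than Bernstein, but either suffices.
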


\subsection{Binary regression}
In the distributed binary regression model each machine $k=1,...,m$, obtains the binary response $X_i^{(k)} \in \{0,1\}$, $i=1,...,n$ with a generative model of the form
\begin{align}
 X_i^{(k)} \mid T_i^{(k)} \overset{ind}{\sim} \text{Bern}(f(T_i^{(k)})),\quad   T_i^{(k)} &\overset{iid}{\sim} \text{Unif}(0,1),\quad i=1,...,n, \label{def:binary}
\end{align}
for $f\in B_{22}^r(L)$, $0\leq f\leq 1$. 

This model is flexible enough to analyze binary regression models with common link functions, such as logistic and probit regression. If $\psi$ is a smooth function, then the composition $\psi(f(T_i^{(k)}))$ shares the smoothness of $f$. Therefore to analyze the model $\mathbb{P}(X_i^{(k)} = 1 \mid T_i^{(k)}) = \psi(f(T_i^{(k)}))$ we may simply consider $\mathbb{P}(X_i^{(k)} = 1 \mid T_i^{(k)}) = f(T_i^{(k)})$, slightly abusing our notations. The minimax properties and adaptive estimators for binary regression in the classical non-distributed setting are studied for instance in \cite{adaptivebinaryreg}. Binary regression is an example of a model where the log-likelihood-ratios $W_{d}^{(k)}$ are bounded. The minimax behavior is summarized in the proposition below and its proof is deferred to Section \ref{sec: proof:prop:binary}.

\begin{prop} \label{prop:binary-regression}
The distributed minimax estimation rate in the distributed binary regression model \eqref{def:binary} is bounded from below by $\eqref{def:eps}$ and for $m\leq n^{\gamma}$, with $\gamma<2r$, the estimator $\hat{f}_{n,m}$ given in \eqref{eq: block:est} achieves this optimal rate up to a logarithmic factor, i.e.
\begin{align*}
   \sup_{f\in B_{22}^r(L)}\mathbb{E}_{f} \|\hat{f}_{n,m}-f\|_2\leq \log (N) \eps_{n,m,B}.
\end{align*}
\end{prop}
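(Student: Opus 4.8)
The plan is to verify the abstract assumptions (for the lower bound) and the sufficient conditions for model \eqref{eq:model:reg} (for the upper bound), since binary regression fits the regression-type framework of Section \ref{sec:suff-conds}. For the lower bound, I would apply Theorem \ref{thm:gen_lower_bound_thm} via Lemma \ref{lem:lower-bound-sufficient-condition}, so the task reduces to checking Assumption \ref{ass:expo_design}. Writing $f_{\bbeta}(t) = C_0 + \eps d^{-(r+1/2)}\beta_h\psi_{j_n,h}(t)$ on $I_h$ with $C_0$ chosen (e.g.\ $C_0 = 1/2$) so that $f_{\bbeta}$ stays bounded away from $0$ and $1$ for $n$ large, the single-observation log-likelihood ratio between Bernoulli$(f_{\bbeta}(t))$ and Bernoulli$(f_{\bbeta'}(t))$ is
\begin{align*}
W_{d,i}^{(k)} &= x\log\frac{f_{\bbeta}(t)}{f_{\bbeta'}(t)} + (1-x)\log\frac{1-f_{\bbeta}(t)}{1-f_{\bbeta'}(t)},
\end{align*}
which has the required form $g_1(t)h(x) - g_2(t)$ with $h(x) = x$ (and $g_1(t) = \log\frac{f_{\bbeta}(t)(1-f_{\bbeta'}(t))}{f_{\bbeta'}(t)(1-f_{\bbeta}(t))}$, $g_2(t) = -\log\frac{1-f_{\bbeta}(t)}{1-f_{\bbeta'}(t)}$). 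Since $h(X_i^{(k)}) = X_i^{(k)} \in \{0,1\}$ is bounded, it is trivially sub-exponential with parameters $\nu^2(t), b(t) = O(1)$. The key estimates in part 3 and 4 of Assumption \ref{ass:expo_design} follow from a first-order Taylor expansion: since $f_{\bbeta}(t) - f_{\bbeta'}(t) = O(\eps d^{-(r+1/2)})$ and $f_{\bbeta}$ is bounded away from $0,1$, one gets $g_1(t) = O(\eps d^{-(r+1/2)})$, hence $\nu^2(t)g_1^2(t) = O(\eps^2 d^{-1-2r})$; here one should be slightly careful, as Assumption \ref{ass:expo_design} states $O(d^{-2r})$ — but the extra $\eps^2 d^{-1}$ is harmless (indeed $\eps$ is a constant and the bounds improve), and in any case the KL/expectation computation gives $\mathbb{E}[W_{d,i}^{(k)}] = \mathrm{KL} \asymp \eps^2 d^{-1-2r}$, consistent with Assumption \ref{ass:W-expect-control}'s $\eps^2 d^{-2r}$. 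The lower variance bound $c_1 d^{-1-2r} \leq \mathbb{E}[\nu^2(T_i^{(k)})g_1^2(T_i^{(k)})\mathbbm{1}_{T_i^{(k)}\in I_h}]$ follows because $\mathrm{var}(X\mid T=t) = f_{\bbeta}(t)(1-f_{\bbeta}(t))$ is bounded below on $I_h$ and $g_1(t)^2 \asymp \eps^2 d^{-1-2r}\psi_{j_n,h}(t)^2$ with $\int_{I_h}\psi_{j_n,h}^2 \asymp 1/d$ after the $d^{-1-2r}$ renormalization is accounted for — I would track constants here carefully since this is the one place a genuine lower bound is needed.

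For the upper bound, I would invoke Lemma \ref{lem:upper-bound-assumption-consequences}, which reduces matters to Assumption \ref{ass:UB-suff-cond} with $h(X) = X$: the estimator $\hat{f}_{n,jh}^{(k)} = \frac1n\sum_i X_i^{(k)}\psi_{jh}(T_i^{(k)})$ is unbiased for $f_{jh}$ since $\mathbb{E}[X\mid T] = f(T)$, and the moment condition $\max_t \mathbb{E}[|X|^q\mid T=t] \leq 1 < \infty$ holds trivially for any $q$ because $X \in \{0,1\}$. Then Theorem \ref{thm:gen_upper_bound_thm} gives $\sup_f \mathbb{E}_f\|\hat f_{n,m}-f\|_2^2 \lesssim (\Nbass/\log N)^{-2r/(1+2r)}$, and combining with Proposition \ref{cor:Nbass-symmetric} to substitute the symmetric-case value of $\Nbass$ and applying Jensen's inequality ($\mathbb{E}\|\cdot\|_2 \leq (\mathbb{E}\|\cdot\|_2^2)^{1/2}$) yields the stated bound $\sup_f\mathbb{E}_f\|\hat f_{n,m}-f\|_2 \leq \log(N)\,\eps_{n,m,B}$ with $\eps_{n,m,B} = \Nbass^{-r/(1+2r)}$ as in \eqref{def:eps}; the extra $\sqrt{\log N}$ from taking the square root is absorbed into the $\log N$ factor. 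The condition $m \leq n^\gamma$, $\gamma < 2r$, enters only through Lemma \ref{lem:lower-bound-sufficient-condition}.

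I expect the main obstacle to be the bookkeeping around the boundary values $f(t) \in \{0,1\}$: the sieve must be constructed (via the shift $C_0$ and the magnitude $\eps$) so that $f_{\bbeta}(t)$ is uniformly bounded inside $(0,1)$ for all $\bbeta$, and one must confirm that the resulting $f_{\bbeta}$ still lies in $B_{22}^r(L)$ — this is where the $2^{-j(r+1/2)}$ (equivalently $d^{-(r+1/2)}$) normalization and a sufficiently small constant $\eps$ are used, together with the fact that the Besov norm of $\sum_h \beta_h\psi_{jh}$ at a single resolution is $\asymp 2^{j(r+1/2)}\cdot 2^{-j/2} = 2^{jr}$ times the coefficient scale. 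Once boundedness away from $0$ and $1$ is secured, every quantity appearing in Assumption \ref{ass:expo_design} is controlled by elementary expansions of $\log$ and $\log(1-\cdot)$, and the rest is an application of the already-established general theorems.
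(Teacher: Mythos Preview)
Your approach is correct and essentially the same as the paper's: verify Assumption~\ref{ass:expo_design} for the lower bound (via Lemma~\ref{lem:lower-bound-sufficient-condition}) and Assumption~\ref{ass:UB-suff-cond} for the upper bound, using the sieve with $C_0=1/2$. The paper exploits the symmetry $1-\tilde f_{\bbeta}=\tilde f_{-\bbeta}$ to write $W_{d,i}^{(k)}=(2X_i^{(k)}-1)\log(\tilde f_{\bbeta}/\tilde f_{-\bbeta})$, i.e.\ $h(x)=2x-1$ and $g_2\equiv 0$, which is just a linear reparametrization of your choice $h(x)=x$; both lead to the same verification.

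Two bookkeeping slips to fix before writing this out: (i) the pointwise difference $f_{\bbeta}(t)-f_{\bbeta'}(t)$ is of order $\eps d^{-r}$, not $\eps d^{-(r+1/2)}$, because $\|\psi_{j_n,h}\|_\infty\asymp d^{1/2}$; consequently $\max_t g_1^2(t)=O(d^{-2r})$ and $\mathbb{E}[W_{d,i}^{(k)}]=O(d^{-2r})$, exactly the orders required (not the stronger $d^{-1-2r}$ you wrote); (ii) since $\psi_{j_n,h}$ is $L_2$-normalised and supported in $I_h$, one has $\int_{I_h}\psi_{j_n,h}^2=1$, not $\asymp 1/d$, which is what gives the correct lower bound $\mathbb{E}[\nu^2 g_1^2\mathbbm{1}_{I_h}]\gtrsim d^{-1-2r}$. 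Neither slip affects the strategy, only the arithmetic.
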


\subsection{Poisson regression}
In the distributed version of the nonparametric Poisson regression model, each machine $k\in\{1,...,m\}$ receives an iid sample $(T_1^{(k)},X_1^{(k)}),...,(T_n^{(k)},X_n^{(k)})$ satisfying
\begin{align}
    X_i^{(k)} \mid T_i^{(k)}\stackrel{ind}{\sim} \text{Pois}(f(T_i^{(k)})),\quad T_i^{(k)} \stackrel{iid}{\sim} \text{Unif}(0,1),\quad i=1,...,n,\label{def:poisson}
\end{align}
with $f\in B_{22}^r(L)$, $f>0$.

In this case the log-likelihood-ratio $W_{d}^{(k)}$ is unbounded and has sub-exponential tails.  This requires a more careful analysis compared to the benchmark nonparametric regression model with sub-Gaussian tails for the log-likelihood-ratio, see Section \ref{sec: proof:prop:poisson} for the proof of the proposition below.

\begin{prop} \label{prop:poisson-regression}
The distributed minimax estimation rate in the distributed Poisson regression model \eqref{def:poisson} is bounded from below by $\eqref{def:eps}$ and for $m\leq n^{\gamma}$, with $\gamma<2r$, the estimator $\hat{f}_{n,m}$ given in \eqref{eq: block:est} achieves this optimal rate up to a logarithmic factor, i.e.
\begin{align*}
   \sup_{f\in B_{22}^r(L)}\mathbb{E}_{f} \|\hat{f}_{n,m}-f\|_2\leq \log (N) \eps_{n,m,B}.
\end{align*}
\end{prop}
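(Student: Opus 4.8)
The plan is to verify the abstract sufficient conditions established earlier, namely Assumption \ref{ass:expo_design} for the lower bound (which via Lemma \ref{lem:lower-bound-sufficient-condition} yields Assumption \ref{ass:tail}, while Assumption \ref{ass:structural} holds automatically for models of the form \eqref{eq:model:reg}), and Assumption \ref{ass:UB-suff-cond} for the upper bound (which via Lemma \ref{lem:upper-bound-assumption-consequences} yields Assumption \ref{ass:UB}). Then Theorems \ref{thm:gen_lower_bound_thm} and \ref{thm:gen_upper_bound_thm} give the lower bound $\Nbass^{-2r/(1+2r)}$ and the matching upper bound $(\Nbass/\log N)^{-2r/(1+2r)}$, which together with Proposition \ref{cor:Nbass-symmetric} translate into the rate $\eps_{n,m,B}$ of \eqref{def:eps} (recalling that $\eps_{n,m,B}=\Nbass^{-r/(1+2r)}$ and that the $\log N$ prefactor absorbs the discrepancy). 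A preliminary point: the sieve elements must satisfy $f_{\bbeta}(t)+C_0>0$ for Poisson to be well-defined, so I would take $C_0$ a fixed positive constant (e.g. $C_0\asymp L$) and $\eps$ a small enough constant so that $\eps d^{-(r+1/2)}\|\psi_{jh}\|_\infty < C_0/2$; this keeps $f_{\bbeta}$ bounded away from $0$ and $\infty$ uniformly, which is what makes all the constants below work.

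For the lower bound I would compute the per-observation log-likelihood ratio explicitly. With $X\mid T\sim\text{Pois}(f_{\bbeta}(T))$, one has
\begin{align*}
W_{d,i}^{(k)} = X_i^{(k)}\log\frac{f_{\bbeta}(T_i^{(k)})}{f_{\bbeta'}(T_i^{(k)})} - \big(f_{\bbeta}(T_i^{(k)}) - f_{\bbeta'}(T_i^{(k)})\big),
\end{align*}
which is exactly of the form $g_1(T)h(X) - g_2(T)$ with $h(X)=X$, $g_1(t)=\log\big(f_{\bbeta}(t)/f_{\bbeta'}(t)\big)$ and $g_2(t)=f_{\bbeta}(t)-f_{\bbeta'}(t)$, verifying item 1. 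For item 2, $h(X)\mid T = X\mid T$ is Poisson with mean bounded above by $C_0+ \eps d^{-(r+1/2)}\|\psi\|_\infty \le 2C_0$, hence sub-exponential with parameters $\nu^2(t),b(t)$ of constant order. For item 3, note that $f_{\bbeta}$ and $f_{\bbeta'}$ differ only where the (disjoint-support) wavelets live and by at most $O(d^{-(r+1/2)}\cdot\|\psi_{jh}\|_\infty)$; since $\|\psi_{jh}\|_\infty\asymp 2^{j/2}\asymp d^{1/2}$, the difference $f_{\bbeta}-f_{\bbeta'}$ is $O(d^{-r})$ pointwise, so by a first-order Taylor expansion of $\log$ around the bounded-away-from-zero value $f_{\bbeta'}(t)$ we get $g_1(t)=O(d^{-r})$; then $\mathbb{E}[W_{d,i}^{(k)}\mid T=t]$ is a smooth bounded function whose oscillation is $O(d^{-r})$, $\nu^2(t)g_1^2(t)=O(d^{-2r})$, and $g_1(t)b(t)=O(d^{-r})$. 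For item 4, $\mathbb{E}[W_{d,i}^{(k)}] = \mathrm{KL}(f_{\bbeta}\|f_{\bbeta'})$ averaged over $T$, and the Poisson KL is $\lambda_1\log(\lambda_1/\lambda_2)-(\lambda_1-\lambda_2)$ which, for $\lambda_1,\lambda_2$ bounded away from $0$ and $\infty$ with $|\lambda_1-\lambda_2|=O(d^{-r})$, is $O(|\lambda_1-\lambda_2|^2)=O(d^{-2r})$; the lower bound $\mathbb{E}[\nu^2(T)g_1^2(T)\mathbbm 1_{T\in I_h}]\gtrsim d^{-1-2r}$ follows because on a constant fraction of $I_h$ (away from the zeros of $\psi_{jh}$) we have $g_1(t)^2\gtrsim d^{-2r}$ and $\nu^2(t)\gtrsim 1$, and $|I_h|\asymp d^{-1}$.

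For the upper bound I would use the natural unbiased estimator $\hat f_{n,jh}^{(k)} = \frac1n\sum_i X_i^{(k)}\psi_{jh}(T_i^{(k)})$, i.e. $h(X)=X$; then $\mathbb{E}[h(X)\mid T]=\mathbb{E}[X\mid T]=f(T)$ verifies item 1 of Assumption \ref{ass:UB-suff-cond}, and since $X\mid T$ is Poisson with bounded mean it has finite moments of every order (in particular of order $q=\lceil 3/r+6\rceil$), uniformly in $t\in[0,1]$ because $\mathbb{E}[X^q\mid T=t]$ is a polynomial in $f(t)$ and $f$ is bounded on $[0,1]$, giving item 2. The condition $m\le n^\gamma$ with $\gamma<2r$ is exactly what Lemma \ref{lem:lower-bound-sufficient-condition} requires for the tail-control step, and it appears in the statement for this reason. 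The main obstacle is the tail/oscillation bookkeeping in item 3 of Assumption \ref{ass:expo_design}: one must keep careful track of the $2^{j/2}\asymp d^{1/2}$ growth of $\|\psi_{jh}\|_\infty$ against the $d^{-(r+1/2)}$ normalization to see that all the relevant quantities come out at the claimed orders $O(d^{-r})$ or $O(d^{-2r})$, and one must check that $C_0$ and $\eps$ can be chosen so that the Poisson means stay uniformly bounded away from $0$ (so that $\log$ and its derivatives are Lipschitz there) — everything else is a routine specialization of the general machinery.
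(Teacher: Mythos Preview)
Your proposal is correct and follows essentially the same route as the paper: verify Assumption~\ref{ass:expo_design} for the Poisson log-likelihood ratio on the sieve $\mathcal{F}_0(d,C_0,\eps)$ with a positive shift $C_0$, then verify Assumption~\ref{ass:UB-suff-cond} with $h(X)=X$. The only cosmetic differences are that the paper fixes $C_0=1$ and writes $h(x)=x-(1+\fbb(t))$ (the centered Poisson) rather than your $h(X)=X$, and replaces your Taylor-expansion heuristics by the explicit elementary inequalities of Lemmas~\ref{lem:log-ratio-bound} and~\ref{lem:log-ratio-squared} to control $\log\frac{1+\fbb}{1-\fbb}$ and its square; neither change affects the argument, since the sub-exponential definition already centers and both devices give the same $O(d^{-r})$ and $O(d^{-2r})$ orders.
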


\subsection{Heteroskedastic regression}
Finally, we consider a distributed version of the heteroskedastic regression model where in each machine $k\in\{1,...,m\}$ we observe iid pairs of random variables $(T_1^{(k)},X_1^{(k)})$, ....,$(T_n^{(k)},X_n^{(k)})$ satisfying
\begin{align}
    X_i^{(k)} \mid T_i^{(k)} &\stackrel{ind}{\sim} N(0, f(T_i^{(k)})), \quad T_i^{(k)} \stackrel{iid}{\sim} \text{Unif}(0,1),\quad i=1,...,n,\label{def:heteroscedastic}
\end{align}
for some unknown, non-negative functional parameter $f\in B_{22}^r(L)$.

This model also falls outside of the standard models as the corresponding log-likelihood ratio $W_{d}^{(k)}$ is unbounded with sub-exponential tails and like the Poisson regression model requires a more careful analysis, see Section \ref{sec: proof:prop:heteroskedastic}.

\begin{prop} \label{prop:heteroskedastic-regression}
The distributed minimax estimation rate in the distributed version of the heteroscedastic regression model \eqref{def:heteroscedastic} is bounded from below by $ \eps_{n,m,B}$, given in $\eqref{def:eps}$. For $m\leq n^{\gamma}$, with $\gamma<2r$, the distributed estimator $\hat{f}_{n,m}$ given in \eqref{eq: block:est} achieves this optimal rate up to a logarithmic factor, i.e.
\begin{align*}
   \sup_{f\in B_{22}^r(L)}\mathbb{E}_{f} \|\hat{f}_{n,m}-f\|_2\leq \log (N) \eps_{n,m,B}.
\end{align*}
\end{prop}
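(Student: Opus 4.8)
The claim has two halves: a minimax lower bound of order $\eps_{n,m,B}$ and a matching (up to $\log N$) upper bound attained by $\hat f_{n,m}$ from \eqref{eq: block:est}. For the lower bound I put the model \eqref{def:heteroscedastic} into the regression form \eqref{eq:model:reg} with conditional density $p_{X\mid T}(x\mid v)=(2\pi v)^{-1/2}e^{-x^2/(2v)}$, verify Assumption~\ref{ass:expo_design}, and invoke Lemma~\ref{lem:lower-bound-sufficient-condition} (which uses $m\le n^\gamma$, $\gamma<2r$) together with Theorem~\ref{thm:gen_lower_bound_thm}; Assumption~\ref{ass:structural} is automatic for this model as noted after \eqref{eq:model:reg}. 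For the upper bound I verify Assumption~\ref{ass:UB-suff-cond} and invoke Lemma~\ref{lem:upper-bound-assumption-consequences} with Theorem~\ref{thm:gen_upper_bound_thm}. Since the variance must stay positive, the testing sieve is $\mathcal F_0(d,C_0,\eps)$ with a fixed shift $C_0=1$: for $\eps$ fixed small and $d$ large one has $\|f_{\bbeta}\|_\infty\lesssim \eps d^{-r}$ (disjoint wavelet supports and $\|\psi_{jh}\|_\infty\asymp d^{1/2}$), so every element $g=f_{\bbeta}+C_0$ lies in $(1/2,3/2)$ and in $B_{2,2}^r(L)$, meeting the modeling constraints.

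\textbf{Lower bound.} Writing $g=f_{\bbeta}+C_0$, $g'=f_{\bbeta'}+C_0$, a direct computation gives
\[
W_{d,i}^{(k)}=\tfrac12\log\frac{g'(T_i^{(k)})}{g(T_i^{(k)})}+\frac{(X_i^{(k)})^2}{2}\Big(\frac1{g'(T_i^{(k)})}-\frac1{g(T_i^{(k)})}\Big),
\]
which has the form of Assumption~\ref{ass:expo_design}(1) with $h(x)=x^2$, $g_1(t)=\tfrac12\big(1/g'(t)-1/g(t)\big)=(g(t)-g'(t))/(2g(t)g'(t))$ and $g_2(t)=\tfrac12\log\big(g(t)/g'(t)\big)$. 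Conditionally on $T_i^{(k)}=t$ we have $h(X_i^{(k)})=(X_i^{(k)})^2\sim g(t)\,\chi^2_1$, a scaled chi-square, hence sub-exponential with parameters $\nu^2(t)\asymp g(t)^2$, $b(t)\asymp g(t)$, giving (2). For (3)--(4): on the support $I_h$ of $\psi_{jh}$ one has $g(t)-g'(t)=\eps d^{-(r+1/2)}(\beta_h-\beta'_h)\psi_{jh}(t)$, so $|g-g'|\lesssim \eps d^{-r}$, and since $g,g'\in(1/2,3/2)$ this gives $|g_1(t)|\lesssim \eps d^{-r}$; with boundedness of $\nu^2,b$ we get $\max_t\nu^2(t)g_1^2(t)\lesssim d^{-2r}$ and $\max_t g_1(t)b(t)\lesssim d^{-r}$. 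Also $\mathbb E[W_{d,i}^{(k)}\mid T=t]=g_1(t)g(t)-g_2(t)=\mathrm{KL}\big(N(0,g(t))\,\big\|\,N(0,g'(t))\big)=O(\eps^2 d^{-2r})$ pointwise, which controls both the oscillation in (3) and $\mathbb E[W_{d,i}^{(k)}]=O(d^{-2r})$ in (4); finally $\int_{I_h}\psi_{jh}^2=1$ and $\nu^2\asymp 1$ give, for $\bbeta,\bbeta'$ differing in coordinate $h$, $\mathbb E[\nu^2(T)g_1^2(T)\mathbbm{1}_{T\in I_h}]\asymp \eps^2 d^{-(1+2r)}$, i.e.\ the last bound in (4). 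Hence Lemma~\ref{lem:lower-bound-sufficient-condition} yields Assumptions~\ref{ass:W-expect-control}--\ref{ass:tail}, and Theorem~\ref{thm:gen_lower_bound_thm} gives $\inf_{\hat f}\sup_f\mathbb E_f\|\hat f-f\|_2^2\gtrsim \Nbass^{-2r/(1+2r)}=\eps_{n,m,B}^2$; the non-squared $L_2$ lower bound follows from the same Fano-type argument, which bounds below the probability of an $\gtrsim\eps_{n,m,B}$ deviation.

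\textbf{Upper bound.} Since $\mathbb E[(X_i^{(k)})^2\mid T_i^{(k)}]=f(T_i^{(k)})$, taking $h(x)=x^2$ makes $\hat f_{n,jh}^{(k)}=\tfrac1n\sum_{i=1}^n (X_i^{(k)})^2\psi_{jh}(T_i^{(k)})$ an unbiased estimator of $f_{jh}=\int_0^1 f\psi_{jh}$, verifying Assumption~\ref{ass:UB-suff-cond}(1). For (2), $\mathbb E[|h(X)|^q\mid T=t]=(2q-1)!!\,f(t)^q$ is bounded uniformly over $t\in[0,1]$ since $f$ is bounded on $[0,1]$, so Lemma~\ref{lem:upper-bound-assumption-consequences} gives Assumption~\ref{ass:UB} and Theorem~\ref{thm:gen_upper_bound_thm} yields $\sup_f\mathbb E_f\|\hat f_{n,m}-f\|_2^2\lesssim(\Nbass/\log N)^{-2r/(1+2r)}$; taking square roots and bounding $(\log N)^{r/(1+2r)}\le\log N$ gives $\sup_f\mathbb E_f\|\hat f_{n,m}-f\|_2\le\log(N)\,\eps_{n,m,B}$.

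\textbf{Main obstacle.} The only genuinely model-specific work is writing down the log-likelihood ratio of the Gaussian scale family and checking that all scale-dependent quantities---$g_1$, $g_2$, the sub-exponential parameters $\nu^2,b$, and the localized second moment on $I_h$---obey the $d^{-r}$/$d^{-2r}$ bounds of Assumption~\ref{ass:expo_design}; this hinges on the variance staying in a fixed compact subinterval of $(0,\infty)$, which is precisely why the $C_0$-shift is used, and otherwise the argument runs parallel to the Poisson regression case. All the remaining heavy lifting---turning the sub-exponential tail of $W_{d}^{(k)}$ into the high-probability event of Assumption~\ref{ass:tail}, and the $q$-th moment bound into the truncation-error control of Assumption~\ref{ass:UB}---is delegated to Lemmas~\ref{lem:lower-bound-sufficient-condition} and~\ref{lem:upper-bound-assumption-consequences}.
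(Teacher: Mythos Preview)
Your proposal is correct and follows essentially the same route as the paper: both verify Assumption~\ref{ass:expo_design} on the sieve $\mathcal F_0(d,1,\eps)$ by writing the Gaussian-scale log-likelihood ratio in the form $g_1(t)h(x)-g_2(t)$ with $h(x)=x^2$ (the paper takes $h(x)=x^2/2$, an immaterial rescaling), use the sub-exponential parameters of a scaled $\chi^2_1$, and then invoke Lemma~\ref{lem:lower-bound-sufficient-condition} and Theorem~\ref{thm:gen_lower_bound_thm} for the lower bound and Assumption~\ref{ass:UB-suff-cond} with $h(x)=x^2$ and Theorem~\ref{thm:gen_upper_bound_thm} for the upper bound. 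Your use of the closed-form KL divergence $\tfrac12[(g/g'-1)-\log(g/g')]=O((g-g')^2)$ to control $\mathbb E[W_{d,i}^{(k)}\mid T]$ is a slightly cleaner alternative to the paper's explicit polynomial bounds via Lemmas~\ref{lem:ratiobound}--\ref{lem:log-ratio-squared}, but the substance is identical.
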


\section{Concluding remarks and future work}\label{sec:discuss}
In this paper we have studied the problem of nonparametric estimation in a distributed setting under communication constraints. We introduced the bit adjusted sample size (BASS), which characterizes the efficient information content in a distributed setting taking the communication constraint into account. We have identified sufficient conditions on the model for deriving distributed minimax lower bounds and a sufficient condition on a block distributed estimator achieving the theoretical lower bounds (up to a logarithmic factor). The abstract results were applied in a variety of nonparametric models including nonparametric regression, density estimation, classification, Poisson regression and volatility estimation. 

Extending our results to high-dimensional models with sparse structure is a natural and interesting direction, which however, requires new ideas due to different underlying model structure. A possible future direction is to derive adaptive distributed procedures for the unknown regularity of the underlying functional parameter of interest achieving the minimax lower bounds. Such kind of estimators were considered in context of the nonparametric regression model based on the distributed Lepskii's method in \cite{szabo_adaptive_2019}. Another potential direction is to derive adaptive estimators which minimize communication between machines while achieving the non-distributed minimax rate; see \cite{szabo2020distributed,cai:2021:distributed} for such approaches in context of the nonparametric regression and the Gaussian white noise models.

\section{Proofs of the minimax distributed lower and upper bounds}\label{sec:proofs}

\subsection{Proof of Theorem \ref{thm:gen_lower_bound_thm}} \label{sec:proof-lower-bound}

The proof of the theorem can be viewed as the extension of Theorem 2.1 of \cite{szabo_adaptive_2019} to a more general, abstract framework. Here, however, we do not carry out explicit computations tailored to the regression model, but keep the setting more general, allowing to apply our results to a variety of models.

We first put a prior on our testing sieve; let $F \in \{-1,1\}^d$ be a uniform random element on the discrete hypercube independent of $T$. This indexes a uniform distribution over the testing sieve by the bijection $F = \bbeta \leftrightarrow \fbb$. First we give an upper bound for the mutual information $I(F;Y)$ between the prior on the sieve $F$ and the transmitted information $Y$, which will be needed to apply Theorem \ref{thm:fanos} (a variant of Fano's inequality).

\begin{lem} \label{lem:ass1-info-bound}
Under Assumption \ref{ass:W-expect-control}, we have $I(F;Y^{(k)}) \leq C\epsilon^2 n d^{-2r}$ for all $k = 1,\ldots, m$.
\end{lem}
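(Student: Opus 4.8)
Lemma~\ref{lem:ass1-info-bound} claims that under Assumption~\ref{ass:W-expect-control}, namely $\mathbb{E}_{\bbeta}[W_d^{(k)}(X_i^{(k)},\bbeta,\bbeta')]\le C\eps^2 d^{-2r}$, we have $I(F;Y^{(k)})\le C\eps^2 n d^{-2r}$ for each machine $k$.

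\emph{Plan of proof.} The central observation is that $Y^{(k)}$ is a (possibly randomized) function of the local data $X^{(k)}$ only, so by the data-processing inequality $I(F;Y^{(k)})\le I(F;X^{(k)})$. Hence it suffices to bound the mutual information between the uniform prior $F$ on the hypercube $\{-1,1\}^d$ and the local sample $X^{(k)}=\{X_i^{(k)}\}_{1\le i\le n}$. I would then write $I(F;X^{(k)})$ using the standard identity
\begin{align*}
I(F;X^{(k)}) = \mathbb{E}_{F}\left[ \mathrm{KL}\big( p_F^{(k)} \,\big\|\, \bar p^{(k)} \big)\right]
\le \mathbb{E}_{F}\,\mathbb{E}_{F'}\left[ \mathrm{KL}\big( p_F^{(k)} \,\big\|\, p_{F'}^{(k)} \big)\right],
\end{align*}
where $p_F^{(k)}$ denotes the joint law of $X^{(k)}$ under the parameter $f_F$, $\bar p^{(k)}$ is the mixture $\mathbb{E}_{F'}p_{F'}^{(k)}$, $F'$ is an independent copy of $F$, and the inequality is the usual convexity bound for mutual information (KL to the mixture is at most the average pairwise KL). By independence of the $n$ observations within a machine, $\mathrm{KL}(p_F^{(k)}\|p_{F'}^{(k)}) = n\,\mathrm{KL}(p_{f_F}\|p_{f_{F'}})$ for a single observation, and $\mathrm{KL}(p_{f_F}\|p_{f_{F'}}) = \mathbb{E}_{\bbeta}[W_d^{(k)}(X_i^{(k)},\bbeta,\bbeta')]$ with $\bbeta=F$, $\bbeta'=F'$. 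Applying Assumption~\ref{ass:W-expect-control} termwise gives $\mathrm{KL}(p_F^{(k)}\|p_{F'}^{(k)})\le n\,C\eps^2 d^{-2r}$ uniformly in $F,F'$, so the expectation over $F,F'$ preserves the bound, yielding $I(F;Y^{(k)})\le I(F;X^{(k)})\le C\eps^2 n d^{-2r}$.

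\emph{Main obstacle.} The steps are all standard information-theoretic manipulations; the only subtlety to handle carefully is justifying the data-processing step for a \emph{randomized} encoding map (the message $Y^{(k)}$ may depend on auxiliary randomness independent of everything else), and making sure the mixture-bound inequality $I(F;X^{(k)})\le \mathbb{E}_{F,F'}\mathrm{KL}(p_F^{(k)}\|p_{F'}^{(k)})$ is invoked correctly — this is the inequality $I(F;X) = \mathbb{E}_F\,\mathrm{KL}(P_{X|F}\|P_X)\le \mathbb{E}_{F}\,\mathbb{E}_{F'}\,\mathrm{KL}(P_{X|F}\|P_{X|F'})$, valid because $\mathrm{KL}(\cdot\|\cdot)$ is jointly convex and $P_X=\mathbb{E}_{F'}P_{X|F'}$. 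I would also note that Assumption~\ref{ass:W-expect-control} is stated for arbitrary fixed pairs $\bbeta,\bbeta'$ in the sieve, so it applies in particular to the random pair $(F,F')$ after conditioning, and the bound is deterministic in $(F,F')$ hence survives the outer expectation. No decomposition (Assumption~\ref{ass:structural}) or tail control (Assumption~\ref{ass:tail}) is needed here — those enter later in the Fano argument — so the proof is short: data processing, convexity to pass to pairwise KL, tensorization over the $n$ i.i.d.\ observations, and a single application of Assumption~\ref{ass:W-expect-control}.
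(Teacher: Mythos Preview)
Your proposal is correct and follows essentially the same argument as the paper: data processing inequality for the Markov chain $F\to X^{(k)}\to Y^{(k)}$, the convexity bound replacing the KL to the mixture by the average pairwise KL, tensorization over the $n$ i.i.d.\ observations, and a direct application of Assumption~\ref{ass:W-expect-control}. Your additional remarks on randomized encoders and on the bound being uniform in $(\bbeta,\bbeta')$ are accurate and only make the justification more explicit than the paper's version.
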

\begin{proof}
Applying the data processing inequality to the Markov chain $F \to X^{(k)} \to Y^{(k)}$ we get that $I(F; Y^{(k)}) \leq I(F; X^{(k)})$. From the definition of the mutual information, the convexity of the KL-divergence, the independence of the observations and finally the definition of $W_{d}^{(k)}$ together with Assumption \ref{ass:W-expect-control} we get that
\begin{align*}
    I(F; Y^{(k)}) &\leq \sum_{\bbeta} \frac{1}{2^d} KL\left(p_f(X^{(k)} \mid F = \bbeta),  \frac{1}{2^d}\sum_{\bbeta'} p(X^{(k)}|F= \bbeta')\right) \\
       &\leq \frac{1}{2^{2d}}  \sum_{\bbeta,\bbeta'}KL\left(p(X^{(k)} \mid F = \bbeta), p(X^{(k)} \mid  F = \bbeta')\right)\\
       &=\frac{1}{2^{2d}}  \sum_{\bbeta,\bbeta'}\sum_{i=1}^n KL\left(p(X_i^{(k)} \mid F = \bbeta), p(X_i^{(k)} \mid  F = \bbeta')\right)\\
&= \frac{1}{2^{2d}}  \sum_{\bbeta,\bbeta'} \sum_{i=1}^n \mathbb{E}_{\bbeta}  W_{d}^{(k)}( X_i^{(k)}, \bbeta, \bbeta') \leq C\epsilon^2 nd^{-2r}.
\end{align*}
\end{proof}

The next bound on the mutual information following from Assumption \ref{ass:tail} relates the communication constraint to the entropy of the binary string $H(Y^{(k)})$, which can be related to the length of the code $l(Y^{(k)})$ and therefore to the communication budget via a modified version of Shannon's source coding theorem. 

\begin{lem} \label{lem:ass2-info-bound}
Under Assumption \ref{ass:tail} with $d \asymp \Nbass^{1/(1+2r)}$ we have for some $C>0$ that
\begin{align*}
    I(F; Y^{(k)}) &\leq C\epsilon^2d^{-1-2r} n\log(N)H(Y^{(k)}) + o(m^{-1}).
\end{align*}
\end{lem}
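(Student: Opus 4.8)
The plan is to bound $I(F;Y^{(k)})$ by relating it to the entropy $H(Y^{(k)})$ through the fact that $Y^{(k)}$ takes only $2^{H(Y^{(k)})}$-ish many effective values, and then to control the per-symbol contribution of the likelihood ratio using Assumption \ref{ass:tail}. Concretely, I would start from the identity $I(F;Y^{(k)}) = H(Y^{(k)}) - H(Y^{(k)}\mid F)$, or more usefully work with the chain $F \to X^{(k)} \to Y^{(k)}$ and the variational characterization
\begin{align*}
I(F;Y^{(k)}) = \min_{Q}\, \mathbb{E}_{F}\, \mathrm{KL}\!\left(\mathbb{P}_{Y^{(k)}\mid F},\, Q\right),
\end{align*}
choosing $Q$ to be the law of $Y^{(k)}$ under a fixed reference parameter, say $\bbeta_0$. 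Then for each value $y$ of the message, $\log\frac{\mathbb{P}_{\bbeta}(Y^{(k)}=y)}{\mathbb{P}_{\bbeta_0}(Y^{(k)}=y)}$ is, by the data processing / log-sum type inequality, controlled by the conditional expectation of the log-likelihood ratio $W_d^{(k)}$ of the raw data given $Y^{(k)}=y$. The key point is that on the high-probability event of Assumption \ref{ass:tail}, the partial sums $\sum_{X_i^{(k)}\in\chi_h} W_d^{(k)}$ are each bounded by $a_n = C\eps d^{-1/2-r}\sqrt{n\log N}$, so summing over the $d$ bins the full log-likelihood ratio is bounded by $d\cdot a_n \asymp \eps d^{1/2-r}\sqrt{n\log N}$ — but this deterministic bound is too crude on its own and must be combined with the entropy of $Y^{(k)}$.

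The sharper step is to use that $Y^{(k)}$ is a binary string and hence the number of messages that carry "non-negligible" probability is essentially $e^{O(H(Y^{(k)}))}$; pairing this counting bound with the per-message likelihood-ratio bound yields the stated $\log N$ and $H(Y^{(k)})$ dependence. I expect the argument to proceed as follows: (i) write $I(F;Y^{(k)}) \le \mathbb{E}_{\bbeta}\sum_{h}\sum_{X_i^{(k)}\in\chi_h} \mathbb{E}[W_d^{(k),(h)}\mid Y^{(k)}]$ plus a small error; (ii) split the expectation according to whether $X^{(k)}$ lies in $E_h^{(k)}(a_n)$, using Assumption \ref{ass:tail} to make the complement contribute only $o(m^{-2}d^{-2})\cdot(\text{crude ratio bound})=o(m^{-1})$; (iii) on the good event, bound the conditional expectation of the truncated sum using the boundedness by $a_n$ together with a Cauchy–Schwarz / Pinsker-type inequality that brings in a factor proportional to $\sqrt{H(Y^{(k)})}$ per bin or, more likely, a direct argument that the information transmitted about each coordinate $\beta_h$ is at most $a_n^2/(\text{variance scale}) \wedge H(Y^{(k)})$; (iv) sum over the $d$ bins and substitute $d\asymp \Nbass^{1/(1+2r)}$ and $a_n^2 = C^2\eps^2 d^{-1-2r} n\log N$ to obtain $C\eps^2 d^{-1-2r} n\log(N) H(Y^{(k)}) + o(m^{-1})$.

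The main obstacle is step (iii): converting the almost-sure bound $a_n$ on the partial sums into a bound on $I(F;Y^{(k)})$ that is \emph{linear} in $H(Y^{(k)})$ rather than merely bounded by the crude $d\cdot a_n$. This is exactly where the communication constraint enters — one needs the observation that a message of entropy $H(Y^{(k)})$ can only reveal $O(H(Y^{(k)}))$ bits total about $F$, and each bit it does reveal about a given coordinate $\beta_h$ is ``worth'' at most roughly $a_n^2 = C^2\eps^2 d^{-1-2r}n\log N$ in KL distance (the squared drift of the bounded log-likelihood ratio over bin $h$). Making this precise likely follows the strategy of \cite{zhang2013information} and \cite{szabo_adaptive_2019}: decompose $I(F;Y^{(k)}) = \sum_h I(F_h; Y^{(k)}\mid F_{<h})$ using Assumption \ref{ass:structural}, bound each term by $\min\big(I(F_h;Y^{(k)}\mid \cdots),\ C a_n^2\big)$ using a strong data-processing / SDPI-type inequality valid for bounded log-likelihood ratios, apply the trivial bound $\sum_h I(F_h;Y^{(k)}\mid F_{<h}) \le H(Y^{(k)})$, and use the elementary fact $\sum_h \min(u_h, v) \le v^{1/2}\big(\sum_h u_h\big)^{1/2}\cdot(\text{something})$ — or more directly $\sum_h \min(u_h,v) \le \sqrt{v}\sqrt{\sum u_h}$ when summing $d$ terms — to trade off the two bounds and land on the product $a_n^2 \cdot H(Y^{(k)})/ (\text{something}) = C\eps^2 d^{-1-2r}n\log(N)H(Y^{(k)})$. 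The $o(m^{-1})$ remainder is routine once Assumption \ref{ass:tail}'s probability $1+o(m^{-2}d^{-2})$ is multiplied against the worst-case likelihood ratio contribution.
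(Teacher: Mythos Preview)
Your high-level intuition is right: the bound comes from a strong-data-processing-type inequality that converts the almost-sure bound $a_n$ on the bin-wise log-likelihood ratios into a contraction coefficient of order $a_n^2$ in front of $I(X^{(k)};Y^{(k)})\le H(Y^{(k)})$. But the concrete mechanism you sketch in step (iii) --- bounding each $I(F_h;Y^{(k)}\mid F_{<h})$ by $\min(\,\cdot\,,Ca_n^2)$, then trading off via $\sum_h\min(u_h,v)\le\sqrt{v}\sqrt{\sum u_h}$ --- does not produce the stated bound. That inequality is neither standard nor obviously true, and even if some version of it held, it would yield a dependence on $\sqrt{H(Y^{(k)})}$ or on $d\cdot a_n^2$ rather than the required \emph{product} $a_n^2\,H(Y^{(k)})$. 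The per-coordinate absolute bound $I(F_h;Y^{(k)})\le Ca_n^2$ is also not what the SDPI actually gives you (a single coordinate carries at most $\log 2$ bits anyway, so that minimum is vacuous unless $a_n$ is tiny).

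What the paper does instead is: introduce the latent bin-label variables $U_i^{(k)}\in\{1,\dots,d\}$ (defined by $U_i^{(k)}=h$ iff $X_i^{(k)}\in\chi_h$), use the chain rule together with $F\perp U^{(k)}$ to get $I(F;Y^{(k)})\le I(F;Y^{(k)}\mid U^{(k)})$, and then apply a ready-made SDPI (Theorem~A.13 of \cite{szabo_adaptive_2019}) which says that when the conditional log-likelihood ratios on each bin are bounded by $a_n$ on events of probability $1-o(m^{-2}d^{-2})$, one has
\[
I(F;Y^{(k)}\mid U^{(k)}=\boldsymbol{u}) \;\le\; \text{(error from }E_h^{(k)}(a_n)^\complement) \;+\; 2e^{2a_n}(e^{a_n}-1)^2\,I(X^{(k)};Y^{(k)}\mid U^{(k)}=\boldsymbol{u}).
\]
The factor $2e^{2a_n}(e^{a_n}-1)^2$ is the contraction coefficient; since $d\asymp\Nbass^{1/(1+2r)}$ forces $a_n^2=C^2\eps^2 d^{-1-2r}n\log N = O(\eps^2)$ to be bounded, this factor is $O(a_n^2)$, and $I(X^{(k)};Y^{(k)}\mid U^{(k)})\le H(Y^{(k)})$ finishes the main term. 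The $o(m^{-1})$ remainder is exactly the union of the $E_h^{(k)}(a_n)^\complement$ contributions. So the missing ingredient in your plan is the \emph{multiplicative} SDPI bound (and the conditioning on $U^{(k)}$ that sets it up), not a coordinate-wise $\min$ trade-off.
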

\begin{proof}
Let us start by defining the latent variables $U_i^{(k)}$, $i=1,...,n$, $k=1,...,m,$ such that  $U_i^{(k)}=h$ if $X_i^{(k)}\in \chi_h$. Note that $U_i^{(k)}$s are independent, discrete uniformly distributed on the set $[d]=\{1,...,d\}$, and independent from $F$. Hence, in view of the chain rule and the nonnegativity of the mutual information we get that
\begin{align*}
I(F;Y^{(k)})&\leq I(F;Y^{(k)})+  I(F;U^{(k)}|Y^{(k)} )=I\big(F; (Y^{(k)},U^{(k)}) \big)\\
& = I(F;Y^{(k)}|U^{(k)} )+ I(F;U^{(k)})=I(F;Y^{(k)}|U^{(k)} )\\
&=\sum_{\boldsymbol{u}\in [d]^n} I(F; Y^{(k)}| U^{(k)}=\boldsymbol{u})P(U^{(k)}=\boldsymbol{u}).
\end{align*}
Furthermore, in view of the assumption that $X^{(k)}_i| (U^{(k)}_i=u_i ,F_{u_i})$ is independent of $F_{u_i'}$, $u_i'\neq u_i$,  we can apply Theorem A.13 of \cite{szabo_adaptive_2019}, implying that
\begin{align}
    I(F; Y^{(k)}|U^{(k)}=\boldsymbol{u}) &\leq \sum_{h=1}^d \Big[ (\log{2})\sqrt{\mathbb{P}_{\beta_h}(E_h^{(k)}(a_n)^\complement | U^{(k)}=\boldsymbol{u})}\nonumber \\
&  \quad+ (\log{|\mathcal{F}_0(r,\epsilon)|})\mathbb{P}_{\beta_h}(E_h^{(k)}(a_n)^\complement| U^{(k)}=\boldsymbol{u} )\Big] \nonumber \\
    &\quad + 2e^{2a_n}(e^{a_n} -1)^2I(X^{(k)}; Y^{(k)}| U^{(k)}=\boldsymbol{u}),\label{eq: UB:cond:mut:info}
\end{align}
where the event $E_h^{(k)}(a_n)$ was defined in Assumption \ref{ass:tail}.

Noting that $\log{|\mathcal{F}_0(d,C_0,\epsilon)|}\leq d\log 2$, we have in view of Assumption \ref{ass:tail}, Jensen's inequality and that the mutual information with $Y^{(k)}$ is always bounded by the entropy $I(X^{(k)}; Y^{(k)}| U^{(k)}) \leq H(Y^{(k)})$,
\begin{align*}
 I(F; Y^{(k)}|U^{(k)}) &\leq \log (2) \sum_{h=1}^d \Big[\sqrt{\mathbb{P}_{\beta_h}(E_h^{(k)}(a_n)^\complement)}+d \mathbb{P}_{\beta_h}(E_h^{(k)}(a_n)^\complement) \Big]\\
&\qquad + 2e^{2a_n}(e^{a_n} -1)^2    I(X^{(k)}; Y^{(k)}| U^{(k)})\\
&= o(m^{-1})+ 2e^{2a_n}(e^{a_n} -1)^2 H(Y^{(k)}).
\end{align*}
Since $d \asymp \Nbass^{1/(1+2r)}$,  by Proposition \ref{cor:Nbass-symmetric} we get that $d^{2r+1} \geq cn\log{N}$ and thus 
\begin{align*}
    a_n^2 = C^2\epsilon^2 d^{-1-2r}n\log{N} \leq C'\eps^2,
\end{align*}
for some constant $C'>0$. Furthermore, by the convexity of the function $x\mapsto e^x$, there exists a constant $C$ such that $e^{x} \leq 1 + Cx$ for all $x \in [0, C']$. Putting everything together we get that
\begin{align*}
    I(F; Y^{(k)}) &\leq Ca_n^2H(Y^{(k)}) + o(m^{-1}) = C'\epsilon^2 d^{-1-2r}n\log(N)H(Y^{(k)}) + o(m^{-1}),
\end{align*}
completing the proof. 
\end{proof}

Finally, we note that  one can adapt assertion (3.4) in \cite{szabo_adaptive_2019} to the present setting. For completeness we provide the proof of the lemma in the supplement. 

\begin{lem}\label{lem: help:Minimax:LB}
For $\Nbass^{1/(1+2r)}\geq 20$ we get that
\begin{align*}
      \inf_{\hat{f}_{n,m} \in \mathcal{F}_{\text{dist}}(\boldsymbol{B})} \sup_{f \in B_{2,2}^r(L) } \mathbb{E}_{f}\Vert \hat{f}_{n,m} - f\Vert_2^2 &\gtrsim \Nbass^{-2r/(1+2r)} \left(1 - \frac{I(F;Y) + \log{2}}{\Nbass^{1/(1+2r)}/6}\right).
\end{align*}
\end{lem}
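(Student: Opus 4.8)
The plan is to run the classical reduction from estimation to a multiway hypothesis test via Fano's inequality, carried out over the disjoint-support wavelet sieve $\mathcal{F}_0(d,C_0,\eps)$ from \eqref{def: sieve_disjoint} with the resolution level chosen so that $d\asymp\Nbass^{1/(1+2r)}$ (exactly as in Lemma~\ref{lem:ass2-info-bound}, and with the constant in $\asymp$ taken $\geq 1$) and with $\eps$ a sufficiently small absolute constant.

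First I would restrict the supremum to the sieve. Every $\fbb$ in $\mathcal{F}_0(d,C_0,\eps)$ has all of its nonzero wavelet coefficients at a single level $j$ with $2^j\asymp d$, each equal to $\pm\eps d^{-(r+1/2)}$ on $d$ indices, so its $B_{2,2}^r$ sequence norm satisfies $\|\fbb\|_{B_{2,2}^r}^2\asymp 2^{2jr}\cdot d\cdot\eps^2 d^{-(2r+1)}\asymp\eps^2$; adding the constant $C_0$ only changes this by an $O(|C_0|)$ amount since constants have finite Besov norm, so for $\eps$ small enough (depending only on $r,L,C_0$ and the chosen wavelet) one has $\mathcal{F}_0(d,C_0,\eps)\subset B_{2,2}^r(L)$. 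Fixing an arbitrary $\hat f_{n,m}\in\mathcal{F}_{\text{dist}}(\boldsymbol B)$ with associated transmissions $Y=(Y^{(1)},\dots,Y^{(m)})$ and putting the uniform prior $F$ on $\{-1,1\}^d$ as in the text, the left-hand side is therefore at least the Bayes risk $\mathbb{E}_F\mathbb{E}_{f_F}\|\hat f_{n,m}-f_F\|_2^2$.

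Second I would turn this into a Hamming-distance risk for recovering $F$. Because the $d$ selected wavelets $\psi_{jh}$ are orthonormal with pairwise disjoint supports, Bessel's inequality for this orthonormal system gives $\|g-\fbb\|_2^2\ge\sum_{h=1}^d\big(\langle g,\psi_{jh}\rangle-\eps d^{-(r+1/2)}\beta_h\big)^2$ for any $g\in L_2$. Taking $\hat\beta_h\in\{-1,1\}$ to be the sign of $\langle\hat f_{n,m},\psi_{jh}\rangle$ (a measurable function of $Y$), on the event $\{\hat\beta_h\ne\beta_h\}$ the $h$-th summand is at least $\eps^2 d^{-(2r+1)}$, whence $\|\hat f_{n,m}-\fbb\|_2^2\ge\eps^2 d^{-(2r+1)}\rho_H(\hat\beta,\bbeta)$, with $\rho_H$ the Hamming distance. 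Consequently the Bayes risk is at least $\eps^2 d^{-(2r+1)}\,\mathbb{E}\,\rho_H(\hat\beta,F)$.

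Third---and this is the only delicate step---I would bound $\mathbb{E}\,\rho_H(\hat\beta,F)$ from below by a list-decoding version of Fano's inequality (Theorem~\ref{thm:fanos}), applied along the Markov chain $F\to X\to Y\to\hat\beta$. The point is that on $\{\rho_H(\hat\beta,F)<d/6\}$ the true $F$ lies in the Hamming ball of radius $d/6$ around $\hat\beta$, a set of size $L$ with $\log_2 L\le d\,H_b(1/6)$ ($H_b$ the binary entropy); since $(1-H_b(1/6))\log 2>1/6$ this yields $\log(2^d/L)\ge d/6$ once $d$---equivalently $\Nbass^{1/(1+2r)}$---exceeds an absolute constant, which is what the hypothesis $\Nbass^{1/(1+2r)}\ge 20$ guarantees. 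Theorem~\ref{thm:fanos} then gives $\mathbb{P}(\rho_H(\hat\beta,F)\ge d/6)\ge 1-\big(I(F;Y)+\log 2\big)/(d/6)$, so $\mathbb{E}\,\rho_H(\hat\beta,F)\ge(d/6)\big(1-(I(F;Y)+\log 2)/(d/6)\big)$. Chaining the three bounds and using $d\asymp\Nbass^{1/(1+2r)}$ with $d\geq\Nbass^{1/(1+2r)}$---so that $\eps^2 d^{-2r}\asymp\Nbass^{-2r/(1+2r)}$ and $d/6\geq\Nbass^{1/(1+2r)}/6$---gives the claim. The main obstacle is exactly this last step: a coordinatewise Fano bound would only produce a deficit of order $\sqrt{I(F;Y)/d}$, whereas the statement needs it to be \emph{linear} in $I(F;Y)$, and securing that linear dependence is what forces the list-decoding argument and pins down the numerical constants (the radius fraction $1/6$ and the threshold $20$); by contrast the Besov-norm estimate on the sieve, the Bessel computation, and the passage from the supremum to the Bayes risk are routine.
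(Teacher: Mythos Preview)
Your proposal is correct and follows essentially the same route as the paper. The paper applies the neighborhood Fano bound (Theorem~\ref{thm:fanos}) directly in the $L_2$ metric on the sieve with $p=2$ and $t^2=\eps^2 d^{-2r}/6$, noting that $\|\fbb-\fbbp\|_2^2=\eps^2 d^{-1-2r}\rho_H(\bbeta,\bbeta')$ so that the $L_2$ neighborhoods are exactly Hamming balls; you instead insert the Bessel projection to pass explicitly to $\rho_H$ before invoking the same Fano bound on the hypercube. The only other cosmetic difference is the ball-volume estimate: the paper bounds $\sum_{i\le d/6}\binom{d}{i}\le 2(7e)^{d/6}$ and checks $\log\big(2^d/\ntmax\big)>d/6$ numerically at $d=20$, whereas you use the binary-entropy bound $\log_2\ntmax\le dH_b(1/6)$, which is sharper and in fact gives the inequality for all $d$ without a threshold.
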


We are now ready to prove Theorem \ref{thm:gen_lower_bound_thm}. Based on Lemma \ref{lem: help:Minimax:LB}, it is sufficient to show that $I(F, Y) < \Nbass^{1/(1+2r)}/7$. In view of $d\asymp  \Nbass^{1/(1+2r)}$, and Lemma \ref{lem:ass1-info-bound} and \ref{lem:ass2-info-bound},
\begin{align*}
    I(F;Y) &\leq \sum_{k=1}^m I(F; Y^{(k)}) \\
&\leq \sum_{k=1}^m \min\left(Cn\epsilon^2\Nbass^{-2r/(1+2r)}, C'\Nbass^{-1}\epsilon^2 n\log(N)H(Y^{(k)}) + o(m^{-1})\right) \\
    &\leq \epsilon^2 n\Nbass^{-2r/(1+2r)}\sum_{k=1}^m \min\left(1, C'\frac{\log{N}}{\Nbass^{1/(1+2r)}}H(Y^{(k)})\right) + o(1).
\end{align*}
Furthermore, from Lemma 5.3 of \cite{szabo_adaptive_2019} we know that 
$$H(Y^{(k)}) \leq 2\mathbb{E}[l(Y^{(k)})] + 1\leq 2 B^{(k)}+1,$$
so
\begin{align*}
    n \sum_{k=1}^m \min\Big(1, C'\frac{\log{N}}{\Nbass^{1/(1+2r)}}H(Y^{(k)})\Big) \lesssim n\sum_{k=1}^m \min\Big(1, \frac{\log{N}}{\Nbass^{1/(1+2r)}}B^{(k)}\Big) \leq  \Nbass,
\end{align*}
which in turn implies that $ I(F; Y)\lesssim \epsilon^2\Nbass^{1/(1+2r)}$. Picking $\epsilon>0$ small enough concludes the proof.

%
%

\subsection{Proof of Theorem \ref{thm:gen_upper_bound_thm}} \label{sec:proof-of-upper-bound}

First we show that $\hat{f}_{n,m}\in\mathcal{F}_{dist}(B)$. On machine $k$, consider $l(Y_{jh}^{(k)})$, which is one if $\vert\hat{f}_{jh,n}^{(k)}\vert \geq \sqrt{N}$ and $\log_2{|\hat{f}_{jh,n}^{(k)}|} + (1/2)\log_2{N} \leq \log_2{N}$ if $\hat{f}_{jh,n}^{(k)} < \sqrt{N}$. Therefore $l(Y_{jh}^{(k)}) \leq \log_2{N}$ almost surely. The number of estimated coefficients is $\lfloor B/\log_2{N}\rfloor$, so the number of transmitted bits is bounded above by $\log_2{N} \times \lfloor B/\log_2{N}\rfloor \leq B$ bits on each machine. Thus the communication budget is always satisfied. 

Next we show that the estimator achieves the minimax distributed rate (up to a logarithmic factor). Let $R_n$ denote the event that none of the transmitted strings $Y_{jh}^{(k)}$, for $ j\in\{0,..., j_n\}, h\in\{0,..., 2^j-1\}$ and $k\in\{1,...,m\}$, is equal to zero. Then 
 \begin{align}
       f_{jh} - \hat{f}_{jh} = f_{jh} - | U_{jh}|^{-1}\sum_{k \in U_{jh}} \tilde{f}_{jh}^{(k)}= Z_{jh} + \epsilon_{jh},\label{eq: bias}
\end{align}
where 
$\epsilon_{jh} = | U_{jh}|^{-1}\sum_{k \in U_{jh}}(\hat{f}_{n,jh}^{(k)} - \tilde{f}_{jh}^{(k)}) $, $Z_{jh} = | U_{jh}|^{-1}\sum_{k \in U_{jh}} (f_{jh} - \hat{f}_{n,jh}^{(k)})$ and $U_{jh}=\{ (\ell-1)\kappa+1,....,\ell\kappa\}$.  Note that by Assumption \ref{ass:UB} on the event $R_n$ we have $\epsilon_{jh}\leq | U_{jh}|^{-1}\sum_{k \in U_{jh}}1/\sqrt{N}\leq 1/\sqrt{N}$. Furthermore, by the same assumption, $\mathbb{E}[Z_{jh}] =0$  and $\mathbb{E}[Z_{jh}^2]=\text{var}(Z_{jh} ) = | U_{jh}|^{-2} \sum_{k \in U_{jh}} \text{var}( \hat{f}_{n,jh}^{(k)})\leq \sigma^2/(\kappa n)$.

Next we decompose the mean integrated squared error into two terms
    \begin{align*}
        \mathbb{E}_{f}\Vert f - \hat{f}_{n,m} \Vert_2^2 &= \mathbb{E}_{f}\Vert \hat{f}_{n,m} - f \Vert_2^2I_{R_n} +\mathbb{E}_{f}\Vert \hat{f}_{n,m} - f \Vert_2^2I_{R_n^\complement}
    \end{align*}
and deal with them separately.  First note that in view of \eqref{eq: bias}, and the inequality $(a+b)^2\leq 2a^2+2b^2$,
    \begin{align*}
        \mathbb{E}_{f}\Vert \hat{f}_{n,m} - f \Vert_2^2I_{R_n} &= \sum_{j=0}^\infty\sum_{h=0}^{2^j-1} \mathbb{E}_{f}(f_{jh} - \hat{f}_{jh})^2I_{R_n} \\
           &\leq 2\sum_{j=0}^{j_n}\sum_{h=0}^{2^j-1}\big( \mathbb{E}_{f}(\epsilon_{jh}^2I_{R_n}) + \mathbb{E}_{f}(Z_{jh}^2)\big) +\sum_{j > j_n}\sum_{h=0}^{2^j-1} f_{jh}^2  \\
        &\lesssim 2^{j_n + 1}\big(1/N+\sigma^2/(n\kappa) \big) + 2^{-2r j_n}L \lesssim \Nbass^{-\frac{2r}{1+2r}}, 
    \end{align*}
    since $\Nbass/2 < n\kappa \leq \Nbass\leq N$. Furthermore,
    \begin{align*}
        \mathbb{E}_{f}\left[\Vert \hat{f}_{n,m} - f \Vert_2^2I_{R_n^\complement}\right] &\leq 2\mathbb{E}_{f}\left[\Vert \hat{f}_{n,m} \Vert_2^2I_{R_n^\complement}\right] + 2\Vert f \Vert_2^2\mathbb{P}(R_n^\complement).
    \end{align*}
Then note that $\Vert \hat{f}_{n,m} \Vert_2^2\leq \sum_{j=0}^{j_n}\sum_{h=0}^{2^j-1}\hat{f}_{jh}^2 {\leq 2^{j_n+1}N}\leq 2N^{\frac{2+2r}{1+2r}}$ and by the union bound,
\begin{align*}
\mathbb{P}(R_n^\complement) \leq   \sum_{k=1}^m
\sum_{j=0}^{j_n}\sum_{h=0}^{2^j-1}\mathbb{P}(\hat{f}_{n,jh}^{(k)} > \sqrt{N})\lesssim m2^{j_n}o(m^{-1}N^{-\frac{3+4r}{1+2r}}) = o(N^{-2}). 
\end{align*}
By combining the above bounds we get that
    \begin{align*}
        \mathbb{E}_{f}\left[\Vert \hat{f}_{n,m} - f \Vert_2^2I_{R_n^\complement}\right] &\lesssim N^{\frac{2 + 2r}{1 + 2r}}\mathbb{P}(R_n^\complement)= o(N^{-\frac{2r}{1+2r}}).
    \end{align*}
    We conclude the proof by combining the above upper bounds on the events $R_n$ and $R_n^\complement$,
    \begin{align*}
       \mathbb{E}_{f}\Vert \hat{f}_{n,m}  - f \Vert_2^2&\lesssim\Nbass^{-\frac{2r}{1+2r}}+ o(N^{-\frac{2r}{1+2r}})\lesssim \Nbass^{-\frac{2r}{1+2r}}.
    \end{align*}

\section{Proof of sufficient conditions for regression type models}\label{sec:proof-sufficient}
In this section we collect the proofs for the sufficient conditions of our main assumptions.

\subsection{Proof of Lemma \ref{lem:lower-bound-sufficient-condition}}\label{sec:proof-sufficient:lower}
By factoring the joint density of $(X,T)$ as the density of $X \mid T$ multiplied by the density of $T$ and noting that the density of $T$ does not depend on $\beta$, we see that the likelihood ratio $W_{d,i}^{(k)}$ is the likelihood ratio of the conditional distributions of $X \mid T$. First note that $W_{d,i}^{(k)}(X_i^{(k)},T_i^{(k)}, \beta, \beta') |(T^{(k)}_i \in I_h)$ only depends on $\beta_h$ and $\beta_h'$. Therefore, by slightly abusing our notations we write that
\begin{align*}
    W_{[h]}^{(k)}( \beta_h, \beta_h')=  \sum_{i=1}^n W_{d,i}^{(k)}(X_i^{(k)}, T_i^{(k)}, \beta_h, \beta_h')\mathbbm{1}_{T_i^{(k)} \in I_h}.
\end{align*}
The event $E_h^{(k)}$ in Assumption \ref{ass:tail} concerns the maximum over different combinations of $\beta_h$ and $\beta_h'$. As there are only four possible combinations of these parameters, the union bound shows that the asymptotic results are not affected if we show the tail bound only for fixed $\beta_h$ and $\beta_h'$ rather than the maximum. Finally, let us denote by $\mathcal{T}_h^{(k)}$ the set of indexes for which the corresponding observations belong to the $h$th bin, i.e. $i\in \mathcal{T}_h^{(k)}$ if $T_i^{(k)}\in I_h$.

Note that in view of Lemma \ref{lem:prop-of-sub-expo-rvs} below, $W_{d,i}^{(k)}|T_i^{(k)} \sim SE(V(T_i^{(k)}),g_1(T_i^{(k)})b(T_i^{(k)}))$, with $V(T_i^{(k)}) = \nu^2(T_i^{(k)})g_1^2(T_i^{(k)})$. Furthermore, let us define the events

\begin{align*}
\mathcal{C}_1^{(k)} &= \left[ \frac{n}{2d} \leq |\mathcal{T}_h^{(k)}|\leq \frac{2n}{d}, h=1,\ldots,d \right], \\
    \mathcal{C}_{2h}^{(k)} &= \left[\mathbb{E}[W_{[h]}^{(k)} \mid T^{(k)}] \leq C\eps d^{-1/2-r}\sqrt{n\log N}\right], \\
    \mathcal{C}_{3h}^{(k)} &= \Bigg[\Bigg\vert \sum_{i=1}^n \Big(V(T_i^{(k)})\mathbbm{1}_{T_i^{(k)}\in I_h} - \mathbb{E}_{T^{(k)}}V(T_i^{(k)}) \mathbbm{1}_{T_i^{(k)}\in I_h}\Big)\Bigg\vert \leq C\epsilon^2d^{-2r-1/2}\sqrt{n\log{N}}\Bigg],
\end{align*}
 and their intersection as
\begin{align}
    \mathcal{C}^{(k)} &= \mathcal{C}_1^{(k)} \cap \left[\bigcap_{h=1}^d\mathcal{C}_{2h}^{(k)}\right] \cap \left[\bigcap_{h=1}^d\mathcal{C}_{3h}^{(k)}\right]. \label{eq:define-C_k-event}
\end{align}

Then in view of Lemma \ref{lem:C_h_combine_bound} below, $\mathbb{P}\big((\mathcal{C}^{(k)})^\complement\big)=o(m^{-2}d^{-2})$ holds. 
Therefore by applying Lemma \ref{lem:sub_exp_cond_prob_bound} below with $C'> \eps^{-2}(2 + \frac{2}{1 + 2r})$ we get
\begin{align*}
      \mathbb{P}_{\bbeta}\left[E_h^{(k)}(a_n)^\complement \right]
&\leq \int_{t\in \mathcal{C}^{(k)}}\mathbb{P}_{\bbeta}\left[E_h^{(k)}(a_n)^\complement | T^{(k)}=t\right]p_{T}(t)dt+\mathbb{P}\big((\mathcal{C}^{(k)})^\complement\big)\\
&\leq 4e^{-C'\eps^2 \log{N}}+ o(m^{-2}d^{-2})=o(m^{-2}d^{-2}),
\end{align*}
concluding our proof.

The next lemma shows that under Assumption \ref{ass:expo_design} the complement of $ \mathcal{C}^{(k)}$ has (asymptotically) negligible probability.

\begin{lem}[Bound on $\mathcal{C}^{(k)}$] \label{lem:C_h_combine_bound}
Under Assumption \ref{ass:expo_design} and $m\leq n^{\gamma}$, for some $\gamma<2r$, it follows that $\mathbb{P}\big((\mathcal{C}^{(k)})^{\complement}\big)=o(m^{-2}d^{-2})$. 
\end{lem}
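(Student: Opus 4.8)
The plan is to union-bound the complement of $\mathcal{C}^{(k)}$ over its three constituent families and to bound each piece by a concentration inequality for a function of the i.i.d.\ design $T^{(k)}=(T_1^{(k)},\dots,T_n^{(k)})$ only; the point is that, once the relevant pointwise conditional quantities are controlled, all of $\mathcal{C}_1^{(k)}$, $\mathcal{C}_{2h}^{(k)}$, $\mathcal{C}_{3h}^{(k)}$ are statements about the design. Since $\mathbb{P}\big((\mathcal{C}^{(k)})^{\complement}\big)\le\mathbb{P}\big((\mathcal{C}_1^{(k)})^{\complement}\big)+\sum_{h=1}^d\big(\mathbb{P}((\mathcal{C}_{2h}^{(k)})^{\complement})+\mathbb{P}((\mathcal{C}_{3h}^{(k)})^{\complement})\big)$, it is enough to show $\mathbb{P}((\mathcal{C}_1^{(k)})^{\complement})=o(m^{-2}d^{-2})$ and, for each fixed $h$, $\mathbb{P}((\mathcal{C}_{2h}^{(k)})^{\complement})+\mathbb{P}((\mathcal{C}_{3h}^{(k)})^{\complement})=o(m^{-2}d^{-3})$. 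For $\mathcal{C}_1^{(k)}$ I would use $|\mathcal{T}_h^{(k)}|\sim\mathrm{Bin}(n,1/d)$ with mean $n/d$, so by the multiplicative Chernoff bound and a union bound over $h$, $\mathbb{P}((\mathcal{C}_1^{(k)})^{\complement})\lesssim d\,e^{-cn/d}$. The hypothesis $m\le n^{\gamma}$ with $\gamma<2r$ enters precisely here: together with $d\asymp\Nbass^{1/(1+2r)}$ and $\Nbass\le\max(n\log N,N)$ it forces $d\lesssim n^{\alpha}(\log n)^{O(1)}$ with $\alpha=(1+\gamma)/(1+2r)<1$, so $n/d$ is at least a positive power of $n$ (up to a polylog factor), whence $d\,e^{-cn/d}=o(n^{-A})$ for every $A$, in particular $o(m^{-2}d^{-2})$.

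For $\mathcal{C}_{3h}^{(k)}$ and $\mathcal{C}_{2h}^{(k)}$ I would reduce both to a Bernstein bound for the bounded i.i.d.\ summands $V(T_i^{(k)})\mathbbm{1}_{T_i^{(k)}\in I_h}$ with $V(t)=\nu^2(t)g_1^2(t)$; these are nonnegative, bounded by $\|V\|_\infty=\max_t\nu^2(t)g_1^2(t)=O(d^{-2r})$, with mean $\mathbb{E}[V(T_1^{(k)})\mathbbm{1}_{T_1^{(k)}\in I_h}]\le\|V\|_\infty|I_h|=O(d^{-2r-1})$ and per-term variance $O(d^{-4r-1})$ (Assumption~\ref{ass:expo_design}, parts (3)--(4)). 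For $\mathcal{C}_{3h}^{(k)}$ this is immediate: Bernstein's inequality with deviation $C\eps^2 d^{-2r-1/2}\sqrt{n\log N}$ gives a bound $2\exp\big(-c\min(C^2\log N,\ d^{-1/2}\sqrt{n\log N})\big)$, and since $d\lesssim n^{\alpha}(\log n)^{O(1)}$ the second argument is a positive power of $n$ times $\sqrt{\log N}$, so taking $C$ large enough makes this $o(n^{-A})$. For $\mathcal{C}_{2h}^{(k)}$ I would first relate $\mathbb{E}[W_{[h]}^{(k)}\mid T^{(k)}]=\sum_{i:T_i^{(k)}\in I_h}g(T_i^{(k)})$, $g(t)=\mathbb{E}[W_{d,i}^{(k)}\mid T_i^{(k)}=t]$ being the conditional Kullback--Leibler divergence, to the $V$-sum: using that $W_{d,i}^{(k)}$ is a genuine log-likelihood ratio, so $\mathbb{E}[e^{-W_{d,i}^{(k)}}\mid T_i^{(k)}=t]=1$, together with the sub-exponential control $W_{d,i}^{(k)}\mid T_i^{(k)}=t\sim SE(\nu^2(t)g_1^2(t),g_1(t)b(t))$ from Lemma~\ref{lem:prop-of-sub-expo-rvs} (valid since $g_1(t)b(t)=O(d^{-r})\le1$ for $d$ large), one evaluates the sub-exponential moment bound at $\lambda=-1$ to get $e^{g(t)}=\mathbb{E}[e^{-(W_{d,i}^{(k)}-g(t))}\mid T_i^{(k)}=t]\le e^{\nu^2(t)g_1^2(t)/2}$, i.e.\ $g(t)\le\tfrac12 V(t)$. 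Hence $\mathbb{E}[W_{[h]}^{(k)}\mid T^{(k)}]\le\tfrac12\sum_i V(T_i^{(k)})\mathbbm{1}_{T_i^{(k)}\in I_h}$, whose mean $O(\eps^2 nd^{-2r-1})$ is of strictly smaller order than $a_n=C\eps d^{-1/2-r}\sqrt{n\log N}$ because $d^{1+2r}\asymp\Nbass\gtrsim n\log N$; so for $N$ large the event $\{\mathbb{E}[W_{[h]}^{(k)}\mid T^{(k)}]>a_n\}$ is contained in a deviation of the $V$-sum above twice its mean on a scale larger than that handled for $\mathcal{C}_{3h}^{(k)}$, and the same Bernstein estimate gives $\mathbb{P}((\mathcal{C}_{2h}^{(k)})^{\complement})=o(n^{-A})$. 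Summing over $h$ and combining with the $\mathcal{C}_1^{(k)}$ bound through the union bound completes the proof.

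The only genuinely delicate step is expected to be the control of $\mathbb{E}[W_{[h]}^{(k)}\mid T^{(k)}]$ in the $\mathcal{C}_{2h}^{(k)}$ argument. The pointwise bound on $g(t)$ available directly from the range condition is only of order $d^{-r}$, which summed over the $\asymp n/d$ design points in $I_h$ yields a conditional mean of order $n d^{-1-r}$, far too large to fit below $a_n$. The extra factor $d^{-r}$ is recovered only via the likelihood-ratio identity and the sub-exponential moment bound, which upgrade the estimate to $g(t)\le\tfrac12\nu^2(t)g_1^2(t)=O(d^{-2r})$ pointwise and, after integrating over $I_h$, to a mean of order $d^{-2r-1}$; this is exactly what renders the conditional expectation negligible against $a_n$. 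Everything else — the binomial tail bound and Bernstein's inequality for bounded summands — is routine bookkeeping.
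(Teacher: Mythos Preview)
Your argument is correct, and the treatment of $\mathcal{C}_1^{(k)}$ and $\mathcal{C}_{3h}^{(k)}$ is essentially the same as the paper's (the paper uses Hoeffding rather than Bernstein for the $V$-sum, a cosmetic difference). The route you take for $\mathcal{C}_{2h}^{(k)}$, however, is genuinely different and worth contrasting with the paper.

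The paper handles $\mathcal{C}_{2h}^{(k)}$ directly via its general sum bound (Lemma~\ref{lem:gen-prob-lemma}) applied to the bounded i.i.d.\ summands $g(T_i^{(k)})\mathbbm{1}_{T_i^{(k)}\in I_h}$ with $g(t)=\mathbb{E}[W_{d,i}^{(k)}\mid T_i^{(k)}=t]$: the range bound $\sup_t g(t)-\inf_t g(t)=O(d^{-r})$ from Assumption~\ref{ass:expo_design}(3) supplies the Hoeffding concentration, and the explicit hypothesis $\mathbb{E}[W_{d,i}^{(k)}]=O(d^{-2r})$ from Assumption~\ref{ass:expo_design}(4), together with symmetry across the $d$ bins, gives $\mathbb{E}[g(T_i^{(k)})\mid T_i^{(k)}\in I_h]=O(d^{-2r})$. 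With $\delta_n=d^{-r}$, $\alpha=1$ and $r_n=n/d$, Lemma~\ref{lem:gen-prob-lemma} yields the bound $e^{-C'\log N}$. So your remark that ``the pointwise bound on $g(t)$ available directly from the range condition is only of order $d^{-r}$ \dots\ far too large'' is true but slightly misleading: the paper does not use the range condition to control the mean; it uses the separate mean hypothesis in Assumption~\ref{ass:expo_design}(4) for that purpose, and the range condition only for concentration about the mean.

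Your approach instead exploits the likelihood-ratio identity $\mathbb{E}[e^{-W_{d,i}^{(k)}}\mid T_i^{(k)}=t]=1$ together with the sub-exponential MGF bound at $\lambda=-1$ (legitimate since $g_1(t)b(t)=O(d^{-r})\le 1$ eventually) to obtain the pointwise inequality $g(t)\le\tfrac12 V(t)$, and then piggybacks on the $V$-sum concentration already needed for $\mathcal{C}_{3h}^{(k)}$. This is a nice structural observation: it shows that the hypothesis $\mathbb{E}[W_{d,i}^{(k)}]=O(d^{-2r})$ in Assumption~\ref{ass:expo_design}(4) is actually redundant for this lemma, being a consequence of parts (1)--(3). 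The paper's argument is shorter given the assumptions as stated; yours is slightly more self-contained and unifies the two events $\mathcal{C}_{2h}^{(k)}$ and $\mathcal{C}_{3h}^{(k)}$ into a single Bernstein estimate.
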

\begin{proof}
By triangle inequality
\begin{align}
    \mathbb{P}\big((\mathcal{C}^{(k)})^{\complement}\big) &\leq \mathbb{P}\big((\mathcal{C}_1^{(k)})^\complement\big) + \sum_{h=1}^d \mathbb{P}\big((\mathcal{C}_{2h}^{(k)})^\complement|\mathcal{C}_1^{(k)})\mathbb{P}(\mathcal{C}_1^{(k)}) + \sum_{h=1}^d \mathbb{P}\big((\mathcal{C}_{3h}^{(k)})^\complement|\mathcal{C}_1^{(k)}\big)\mathbb{P}(\mathcal{C}_1^{(k)})\nonumber \\
&\leq 2d\exp(-n/(8d)) + d\mathbb{P}\big((\mathcal{C}_{21}^{(k)})^\complement \mid \mathcal{C}_1^{(k)}\big) + d\mathbb{P}(\mathcal{C}_{31}^{(k)} \mid \mathcal{C}_1^{(k)}),\label{eq:UB:comp:event}
\end{align}
where the last line follows by applying Chernoff's bound, see Lemma 5.2 of \cite{szabo_adaptive_2019} and the symmetry in $h=1,...,d$. We show below that each term on the right hand side multiplied by $(md)^2$ tends to zero.

The inequalities $m \leq n^\gamma$, $\gamma<2r$ and $d\asymp N_{bass}^{1/(1+2r)}  \leq N^{1/(1+2r)}$ imply $d \leq n^{(\gamma + 1)/(1 + 2r)}$ and thus
\begin{align*}
    m^2d^3\exp(-n/(8d)) &\leq \exp\left(-n^{(2r - \gamma)/(1+2r)}/8 + \left[\gamma + \frac{3\gamma + 3}{1 + 2r}\right]\log{n}\right) =o(1).
\end{align*}

We will use Lemma \ref{lem:gen-prob-lemma} to bound $\mathbb{P}((\mathcal{C}_{21}^{(k)})^\complement \mid \mathcal{C}_1^{(k)})$. The iid sequence $X_1,\ldots, X_n$ from Lemma \ref{lem:gen-prob-lemma} will be $T_1^{(k)},\ldots, T_n^{(k)}$ with $h(T_i^{(k)}) = \mathbb{E}[W_{d,i}^{(k)} \mid T_i^{(k)}]$. With this choice we have $S_n = \sum_{i=1}^n h(T_i^{(k)})\mathbbm{1}_{T_i^{(k)} \in I_h} = \mathbb{E}[W_{[h]}^{(k)} \mid T^{(k)}]$. Letting $\delta_n = d^{-r}$ and $\chi_n = I_h$, we have from Assumption \ref{ass:expo_design} that the support condition on $h$ is satisfied with $\alpha = 1$. For the expectation, we recall that $\mathbb{P}(T_i^{(k)} \in I_h) = 1/d$ and $\mathbb{E}[W_{d,i}^{(k)} \mid T_i^{(k)}  \in I_h]$ is equal for all $h$ and  conclude that 
\begin{align*}
\mathbb{E}[h(T_i^{(k)}) \mid T_i^{(k)}  \in I_h] = \sum_{h'=1}^d \mathbb{E}[W_{d,i}^{(k)} \mid T_i^{(k)} \in I_{h'}]\mathbb{P}(T_i^{(k)}  \in I_{h'}) = \mathbb{E}[W_{d,i}^{(k)}] = O(d^{-2r}).
\end{align*}
The set $I_n = \mathcal{T}_h^{(k)}$ satisfies the size constraint when $r_n = n/d$ based on the definition of $\mathcal{C}_1^{(k)}$. Note that we have $d^{-r}\sqrt{r_{n}} <1$ since $d^{2r+1}\gtrsim N_{bass}\gg n$. Therefore, for large enough $C>0$ in the definition of $\mathcal{C}_{21}^{(k)}$ we have for some $C'>2\gamma+3(1+\gamma)/(1+2r)$ that
\begin{align*}
m^2d^3\mathbb{P}\big((\mathcal{C}_{21}^{(k)})^\complement \mid \mathcal{C}_1^{(k)}\big) \leq  m^2d^3n^{-C'}=n^{2\gamma+3(1+\gamma)/(1+2r)-C'}= o(1).
\end{align*}

Finally, in view of Assumption \ref{ass:expo_design}, the support length of $V(T_i^{(k)})$ is bounded by $Cd^{-2r}$, hence by Hoeffding's inequality
\begin{align*}
   \mathbb{P}&\left[ \left\vert \sum_{i} V(T_i^{(k)})\mathbbm{1}_{T_i^{(k)} \in I_h } - \mathbb{E}\left(\sum_{i }V(T_i^{(k)})\mathbbm{1}_{T_i^{(k)} \in I_h }\right)\right\vert > C\epsilon^2d^{-2r}\sqrt{\frac{n\log{N}}{d}}\right]\\ 
&\qquad\qquad\qquad\leq \exp\left(-\frac{C^2\eps^4d^{-4r} n\log(N)/d}{d^{-4r}(2n/d)}\right) = \exp\left(-(C^2\eps^4/2)\log{N}\right),
\end{align*}
which by the same arguments as above shows that the third term on the right hand side of \eqref{eq:UB:comp:event} is of $o(m^{-2}d^{-2})$, finishing the proof of the lemma.
\end{proof}

Now we move on to the conditions controlling $\mathbb{P}(W_{[h]}^{(k)} \geq a_n \mid \mathcal{C}^{(k)})$ under the sub-exponential assumption on $h(X)$.

\begin{lem}[Sub-exponential bound on conditional probability] \label{lem:sub_exp_cond_prob_bound}
Under Assumption \ref{ass:expo_design} for $a_n = C_a \eps d^{-r-1/2}\sqrt{ n\log(N)}$ we have
\begin{align*}
    \mathbb{P}(W_{[h]}^{(k)} \geq a_n \mid T^{(k)}=t ) \leq e^{-C'\eps^2\log{N}},
\end{align*}
for all $ t \in \mathcal{C}^{(k)}$ and $k\in\{1,...,m\}$, where $\mathcal{C}^{(k)}$ is defined in \eqref{eq:define-C_k-event}, $W_{[h]}^{(k)}=W_{[h]}^{(k)}(\beta_h,\beta_h')$ for arbitrary $\beta_h,\beta_h'\in\{-1,1\}$, and  $C'$ can be made arbitrarily large by selecting $C_a$ large enough. 
\end{lem}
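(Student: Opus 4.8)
The plan is to condition on the design vector $T^{(k)}=t$, recognize $W_{[h]}^{(k)}$ as a sum of \emph{independent} sub-exponential random variables whose three parameters (mean, variance proxy, scale) are all controlled on the event $\mathcal{C}^{(k)}$, and then apply the standard Bernstein-type tail inequality for sub-exponential sums. If $\beta_h=\beta_h'$ then $W_{[h]}^{(k)}\equiv 0$ and the claim is trivial, so I will assume $\beta_h\neq\beta_h'$ throughout.

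First I would fix the design, $T^{(k)}=t=(t_1,\dots,t_n)$, and write $W_{[h]}^{(k)}=\sum_{i\in\mathcal{T}_h^{(k)}} W_{d,i}^{(k)}(X_i^{(k)},t_i,\beta_h,\beta_h')$, which conditionally is a sum of independent terms since the $X_i^{(k)}$ are independent. By Assumption \ref{ass:expo_design} parts 1--2 and Lemma \ref{lem:prop-of-sub-expo-rvs}, conditionally on $T_i^{(k)}=t_i$ the summand $W_{d,i}^{(k)}=g_1(t_i)h(X_i^{(k)})-g_2(t_i)$ is sub-exponential, $SE\big(V(t_i),\,g_1(t_i)b(t_i)\big)$ with $V(t_i)=\nu^2(t_i)g_1^2(t_i)$, centered at $\mathbb{E}[W_{d,i}^{(k)}\mid T_i^{(k)}=t_i]$. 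Using additivity of the sub-exponential parameters over independent summands (again Lemma \ref{lem:prop-of-sub-expo-rvs}), conditionally on $T^{(k)}=t$ the sum $W_{[h]}^{(k)}$ is sub-exponential with mean $\mu_h:=\mathbb{E}[W_{[h]}^{(k)}\mid T^{(k)}=t]$, variance proxy $\tilde\nu^2:=\sum_{i\in\mathcal{T}_h^{(k)}}V(t_i)$, and scale $\tilde b:=\max_{i\in\mathcal{T}_h^{(k)}}|g_1(t_i)b(t_i)|$.

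Next I would control these on $t\in\mathcal{C}^{(k)}$. Assumption \ref{ass:expo_design} part 3 gives $\tilde b=O(d^{-r})$ and $\max_s V(s)=O(d^{-2r})$, so with $|\mathcal{T}_h^{(k)}|\leq 2n/d$ from $\mathcal{C}_1^{(k)}$ we obtain $\tilde\nu^2=O(nd^{-2r-1})$ (the same order also follows from $\mathcal{C}_{3h}^{(k)}$, since $\mathbb{E}_{T^{(k)}}[\sum_i V(T_i^{(k)})\mathbbm{1}_{T_i^{(k)}\in I_h}]=O(nd^{-2r-1})$ and the slack allowed in $\mathcal{C}_{3h}^{(k)}$, of order $\eps^2 d^{-2r-1/2}\sqrt{n\log N}$, is $O(nd^{-2r-1})\cdot\eps^2\sqrt{d\log N/n}$, which is of smaller order because $d\asymp\Nbass^{1/(1+2r)}$ and $m\le n^\gamma$ force $d\log N/n\to0$). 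From $\mathcal{C}_{2h}^{(k)}$, $\mu_h\leq C\eps d^{-1/2-r}\sqrt{n\log N}$, so taking $C_a\geq 2C$ yields $a_n-\mu_h\geq a_n/2$. Plugging into the Bernstein bound,
\begin{align*}
\mathbb{P}\big(W_{[h]}^{(k)}\geq a_n\mid T^{(k)}=t\big)=\mathbb{P}\big(W_{[h]}^{(k)}-\mu_h\geq a_n-\mu_h\mid T^{(k)}=t\big)\leq\exp\!\Big(-\tfrac12\min\Big(\tfrac{(a_n/2)^2}{\tilde\nu^2},\ \tfrac{a_n/2}{\tilde b}\Big)\Big);
\end{align*}
the quadratic term is $\gtrsim C_a^2\eps^2 d^{-2r-1}n\log N/(nd^{-2r-1})\asymp C_a^2\eps^2\log N$, while the linear term is $\gtrsim C_a\eps\,d^{-r-1/2}\sqrt{n\log N}/d^{-r}=C_a\eps\sqrt{n\log N/d}$, which, using $m\le n^\gamma$ with $\gamma<2r$ (so $n/(d\log N)\gtrsim n^{(2r-\gamma)/(1+2r)}/\log N\to\infty$), dominates any fixed multiple of $\eps^2\log N$ for $n$ large. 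Hence the minimum is at least $C'\eps^2\log N$ with $C'$ as large as desired by enlarging $C_a$, which gives $\mathbb{P}(W_{[h]}^{(k)}\geq a_n\mid T^{(k)}=t)\leq e^{-C'\eps^2\log N}$, and a union bound over the four choices of $(\beta_h,\beta_h')$ does not affect the conclusion.

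\textbf{Main obstacle.} I expect the delicate part to be the bookkeeping around the variance proxy $\tilde\nu^2$ and the competition between the two Bernstein regimes: one must verify that neither the random cardinality $|\mathcal{T}_h^{(k)}|$ nor the fluctuation slack built into $\mathcal{C}_{3h}^{(k)}$ inflates $\tilde\nu^2$ above order $nd^{-2r-1}$, and that the linear (sub-exponential) term $a_n/\tilde b$ is not the bottleneck. Both hinge on $d\asymp\Nbass^{1/(1+2r)}$ together with the regime restriction $m\le n^\gamma$, $\gamma<2r$, which guarantees $d\ll n/\log N$; this is precisely where that hypothesis is used.
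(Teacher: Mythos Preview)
Your proof is correct and follows essentially the same route as the paper: condition on $T^{(k)}=t$, recognise $W_{[h]}^{(k)}$ as a sub-exponential sum with mean $\mu_h$, variance proxy $\tilde\nu^2=\sum_{i\in\mathcal{T}_h^{(k)}}V(t_i)$ and scale $\tilde b=\max_i |g_1(t_i)b(t_i)|$, use $\mathcal{C}_{2h}^{(k)}$ to secure $a_n-\mu_h\geq a_n/2$, and then apply the sub-exponential tail bound. The one tactical difference is that the paper first verifies $a_n-\mu_h\leq\tilde\nu^2/\tilde b$ (which requires the \emph{lower} bound $\tilde\nu^2\geq (c_1/2)nd^{-1-2r}$, obtained from $\mathcal{C}_{3h}^{(k)}$ together with the last inequality in Assumption~\ref{ass:expo_design}) so that only the Gaussian branch of Lemma~\ref{lem:sub:exp:tail} is needed, whereas you bound both branches of the Bernstein minimum directly; your version is therefore slightly more economical, since it never calls on the lower bound for $\tilde\nu^2$ (and hence not on $\mathcal{C}_{3h}^{(k)}$ or the final part of Assumption~\ref{ass:expo_design}) at all.
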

\begin{proof}
First note, that $W_{[h]}^{(k)}(1,1) = W_{[h]}^{(k)}(-1,-1) = 0$, for which the statement trivially holds, hence it remains to deal with the cases $W_{[h]}^{(k)}(1,-1)$ and $W_{[h]}^{(k)}(-1,1)$. Since by assumption $h(X_i^{(k)})|T_i^{(k)}\sim SE\big(\nu^2(T_i^{(k)}), b(T_i^{(k)}) \big)$, we have in view of Lemma \ref{lem:prop-of-sub-expo-rvs} that
\begin{align*}
 &W_{[h]}^{(k)}\mid T^{(k)} \sim SE\left(\sum_{ i \in \mathcal{T}_h^{(k)}} V(T_i^{(k)}), \max_{ i \in \mathcal{T}_h^{(k)}} b(T_i^{(k)})g_1(T_i^{(k)})\right),\\
&\mathbb{E}[W_{[h]}^{(k)} \mid T^{(k)}] = \sum_{i \in \mathcal{T}_h^{(k)}} \hspace{-7pt} g_1(T_i^{(k)})\mathbb{E}[h(X_i^{(k)}) \mid T_i^{(k)}] - g_2(T_i^{(k)}).
\end{align*}

We show below that 
\begin{align}
    0 \leq a_n - \mathbb{E}[W_{[h]}^{(k)} \mid T^{(k)}] &\leq \frac{\sum_{i \in \mathcal{T}_h^{(k)}} V(T_i^{(k)})}{\max_{i \in \mathcal{T}_h^{(k)}} b(T_i^{(k)})g_1(T_i^{(k)})},\label{cond1:lem10}\\
 \frac{(a_n - \mathbb{E}[W_{[h]}^{(k)} \mid T^{(k)}])^2}{\sum_{i \in \mathcal{T}_h^{(k)}} V(T_i^{(k)})} &\geq C'\log{N},\label{cond2:lem10}
\end{align}
where $C'$ can be made arbitrary large by large enough choice of $C_a$. Since $W_{[h]}^{(k)}\mid T^{(k)}$ is sub-exponential, Lemma \ref{lem:sub:exp:tail} gives, for all $T^{(k)}\in  \mathcal{C}^{(k)}$,
\begin{align*}
    \mathbb{P}(W_{[h]}^{(k)} \geq a_n \mid T^{(k)}) &= \mathbb{P}\left(W_{[h]}^{(k)} - \mathbb{E}[W_{[h]}^{(k)} \mid T^{(k)}] \geq a_n - \mathbb{E}[W_{[h]}^{(k)} \mid T^{(k)}] \Big| T^{(k)}\right) \\
    &\leq \exp\left(-\frac{(a_n - \mathbb{E}[W_{[h]}^{(k)}| T_h^{(k)}])^2}{\sum_{i \in \mathcal{T}_h^{(k)}} V(T_i^{(k)})}\right)\leq e^{-C' \log N},
\end{align*}
providing our statement.

\textbf{Proof of \eqref{cond1:lem10}.} 
By picking $C_a$ at least twice as large as the constant in the definition of $\mathcal{C}_{2h}^{(k)}$, we get $0<a_n/2<a_n - \mathbb{E}[W_{[h]}^{(k)} \mid T^{(k)}=t]$, for $t\in \mathcal{C}_{2h}^{(k)}$. For the upper bound note that
on the event $\mathcal{C}_{3h}^{(k)}\cap \mathcal{C}_1^{(k)}$,
\begin{align*}
    \sum_{i=1}^n V(T_i^{(k)})\mathbbm{1}_{T_i^{(k)} \in I_h} &\geq \sum_{i=1}^n \mathbb{E}\left[V(T_i^{(k)})\mathbbm{1}_{T_i^{(k)} \in I_h}\right] - C\eps^2 d^{-2r-1/2}\sqrt{n\log N } \\
    &\geq n c_1d^{-1-2r} - C\eps^2d^{-2r-1/2}\sqrt{n\log N } \geq (c_1/2)nd^{-1-2r}
\end{align*}
for $n$ large enough. Similarly we also get the upper bound $  \sum_{i \in \mathcal{T}_h^{(k)}} V(T_i^{(k)}) \leq C_2nd^{-1-2r}$ for some $C_2>0$. Therefore in view of the assumption $\max_{t} b(t)g_1(t) \leq Cd^{-r}$, 
\begin{align*}
 \frac{\sum_{i \in \mathcal{T}_h^{(k)}} V(T_i^{(k)})}{\max_{i \in \mathcal{T}_h^{(k)}} b(T_i^{(k)})g_1(T_i^{(k)})}&\gtrsim \frac{nd^{-1-2r}}{d^{-r}}\gtrsim \eps d^{-1/2-r} \sqrt{n\log N}\\
&\gtrsim a_n\geq a_n- \mathbb{E}[W_{[h]}^{(k)} \mid T^{(k)}],
\end{align*}
where in the last inequality we used that $\mathbb{E}[W_{[h]}^{(k)} \mid T^{(k)}]$ is a KL-divergence, hence nonnegative.

\textbf{Proof of \eqref{cond2:lem10}}. Using the bounds derived above, we have
\begin{align*}
     \frac{\big(a_n - \mathbb{E}[W_{[h]}^{(k)} \mid T^{(k)}]\big)^2}{\sum_{i \in \mathcal{T}_h^{(k)}} V(T_i^{(k)})} \geq \frac{a_n^2/4}{C_2nd^{-1-2r}} \geq C_a^2 \eps^2/(4C_2) \log{N}.
\end{align*}
\end{proof}

\subsection{Proof of Lemma \ref{lem:upper-bound-assumption-consequences}}\label{sec:proof-sufficient:upper}

Using iterated expectations we have
    \begin{align*}
        \mathbb{E}\left[\hat{f}_{n,jh}^{(k)}\right] = \mathbb{E}\left[\mathbb{E}[h(X_1^{(k)}) \mid T_1^{(k)}] \psi_{jh}(T_1^{(k)})]\right] = \int_0^1 f(t)\psi_{jh}(t)\,dt = f_{jh}.
    \end{align*}
Next note that by Jensen's inequality for $q\geq 1$, $\mathbb{E}[h(X)^2 \mid T] \leq (\mathbb{E}[|h(X)|^q \mid T])^{2/q}$, which in turn implies $\text{var}[h(X) \mid T] \leq \mathbb{E}[h(X)^2 \mid T]<\infty$. Take $C=\sup_{t \in [0,1]}\max\{f(t)^2, \text{var}[h(X) \mid t]\}$. By the law of total variance and $\mathbb{E}[\psi_{jh}(T)^2] =1$ we have
    \begin{align*}
        n\text{var}\left(\hat{f}_{n,jh}^{(k)}\right) = \mathbb{E}\left[\psi_{jh}(T_1^{(k)})^2\text{var}(h(X_1^{(k)}) \mid T_1^{(k)})\right] + \text{var}\left(f(T_1^{(k)})\psi_{jh}(T_1^{(k)})\right) \leq 2C,
    \end{align*}
which means that $\text{var}(\hat{f}_{n,jh}^{(k)}) \leq 2C/n$ across all $n,j$ and $h$, concluding the first part of Assumption \ref{ass:UB}.

Next consider the asymptotics of the tail bound. In view of Markov's inequality
\begin{align*}
    \mathbb{P}(\hat{f}_{n,jh}^{(k)} > \sqrt{N}) \leq \mathbb{P}\left(|\hat{f}_{n,jh}^{(k)}|^q > N^{q/2}\right) \leq \frac{\mathbb{E}[|\hat{f}_{n,jh}^{(k)}|^q]}{N^{q/2}}.
\end{align*}
Then by Minkowski's inequality we get
\begin{align*}
    \mathbb{E}[|\hat{f}_{n,jh}^{(k)}|^q] &\leq \mathbb{E}[|h(X_1^{(k)})\psi_{jh}(T_1^{(k)})|^q]\\
& = \mathbb{E}\left[\mathbb{E}[|h(X_1^{(k)})|^q \mid T_1^{(k)}] \psi_{jh}^2(T_1^{(k)}) \vert \psi_{jh}(T_1^{(k)})\vert^{q-2}\right]\\
& \leq C2^{(q-2)j/2}\mathbb{E}[\psi_{jh}^2(T_1^{(k)})] \leq C\Nbass^{\frac{(q-2)/2}{1 + 2r}}.
\end{align*} 
Putting this into the Markov inequality bound gives
\begin{align*}
     \mathbb{P}(\hat{f}_{n,jh}^{(k)} > \sqrt{N}) &\lesssim \frac{\Nbass^{\frac{(q-2)/2}{1 + 2r}}}{N^{q/2}} \lesssim N^{-\frac{qr + 1}{1 + 2r}}= o(m^{-1}N^{-\frac{3+4r}{1+2r}})
\end{align*}
since $m=o(N)$.

\section{Verification of examples}\label{sec:proof:examples}

Here we verify the conditions for a variety of models to demonstrate the broad applicability of our sufficient conditions. For $f\in B_{22}^r(L)$ we consider the sieve \eqref{def: sieve_disjoint} with $d=c2^{j_n}$. Note that $\|\fbb+C_0\|_\infty=O(1)$.

\subsection{Proof of Proposition \ref{prop:nonparametric-regression} (nonparametric regression)}\label{sec: proof:prop:regression}
This model can be written in the form \eqref{eq:model:reg}, hence it is sufficient to verify the assumptions given in Section \ref{sec:suff-conds}.\\
 
\textbf{Lower bound:} We verify that Assumption \ref{ass:expo_design} holds which in view of Lemma \ref{lem:sub_exp_cond_prob_bound} implies Assumptions \ref{ass:W-expect-control} and \ref{ass:tail} and as a consequence our proposition follows from Theorem \ref{thm:gen_lower_bound_thm}. For nonparametric regression we use the sieve $\mathcal{F}_0(d,C_0,\eps)$ given in \eqref{def: sieve_disjoint} with $C_0=0$, $d=c\Nbass^{1/(1+2r)}$ and $\eps>0$ small enough. Then
a term in the log-likelihood-ratio for $T_i^{(k)}\in I_h$
\begin{align*}
    W_{d,i}^{(k)} = \log\frac{p(X_i^{(k)}  \mid f_{j_nh} = \beta_h, T_i^{(k)} )}{p(X_i^{(k)}  \mid f_{j_nh} = -\beta_h, T_i^{(k)} )} &= 2X_i^{(k)} \fbb(T_i^{(k)} ),
\end{align*}
hence $h(X_i^{(k)}) = X_i^{(k)}, g_1(T_i^{(k)}) = 2\fbb(T_i^{(k)})$, and $g_2(T_i^{(k)}) = 0$. Conditional on $T_i^{(k)}$ the distribution of $X_i^{(k)}$ is Gaussian which implies sub-exponentiality with $\nu^2(T_i^{(k)}) = 1$ and $b(T_i^{(k)}) = 0$. Since $\mathbb{E}[X_i^{(k)} \mid T_i^{(k)}] = \fbb(T_i^{(k)})$ we have
\begin{align*}
    \mathbb{E}[W_{d,i}^{(k)} \mid T_i^{(k)}] &= 2\epsilon^2d^{-1-2r}\sum_{h=1}^d\psi_{jh}^2(T_i^{(k)}).
\end{align*}
By construction $|\psi_{j_nh}^2(T_i^{(k)})/d| \leq c^{-1}\Vert \psi \Vert_\infty^2$ and using that the basis $\psi_{j_nh}$, $h=1,...,d$ has disjoint support we get that
\begin{align*}
    \max_{T_i^{(k)}} \mathbb{E}[W_{d,i}^{(k)} \mid T_i^{(k)}] - \min_{T_i^{(k)}} \mathbb{E}[W_{d,i}^{(k)} \mid T_i^{(k)}] \leq 4\epsilon^2d^{-2r}c^{-1}\Vert \psi\Vert_\infty^2 =O(d^{-2r}).
\end{align*}
For $V(T_i^{(k)}) = \nu^2(T_i^{(k)})g_1^2(T_i^{(k)}) = 4\fbb(T_i^{(k)})^2$ we also have
\begin{align*}
    \max_{T_i^{(k)}} V(T_i^{(k)}) - \min_{T_i^{(k)}} V(T_i^{(k)}) \leq 8\epsilon^2d^{-2r}c^{-1}\Vert \psi \Vert_\infty^2. 
\end{align*}
As $X_i^{(k)}$ is sub-Gaussian, we trivially have $b(T_i^{(k)})g_1(T_i^{(k)}) = 0 =O(d^{-r})$. Thus we have verified all needed almost sure bounds. We finish the proof by showing that the bounds in expectation also hold, i.e.
\begin{align*}
    &\mathbb{E}[W_{d,i}^{(k)}]= \int_0^1 \mathbb{E}[W_{d,i}^{(k)} \mid T_i^{(k)}=t]\,dt = 2\epsilon^2 d^{-1-2r}\sum_{h=1}^d \int_0^1 \psi_{j_nh}^2(t)dt = 2\epsilon^2d^{-2r}, \\
    &\mathbb{E}[V(T_i^{(k)})\mathbbm{1}_{T_i^{(k)} \in I_h}] = \int_{I_h} 4\fbb(t)^2\,dt =  4\epsilon^2d^{-1-2r}.
\end{align*}

\noindent\textbf{Upper bound:} We verify Assumption \ref{ass:UB-suff-cond}, providing sufficient conditions for Theorem \ref{thm:gen_upper_bound_thm} and implying the stated upper bound. Note that $h(X) = X$ and $\mathbb{E}[X \mid T] = f(T)$. As the density of $X \mid T$ is continuous in $T$ and the normal distribution has all moments, we see that $\mathbb{E}[|h(X)|^q \mid T]$ is bounded for $T \in [0,1]$.

\subsection{Proof of Proposition \ref{prop:density-estimation} (density estimation)}\label{sec: proof:prop:density}
This model falls outside of the regression framework \eqref{eq:model:reg}. Therefore we verify Assumptions  \ref{ass:W-expect-control}, \ref{ass:structural} and \ref{ass:tail} for the lower bound and Assumption \ref{ass:UB} for the upper bound.\\

\textbf{Lower bound:}  We consider the testing sieve $\mathcal{F}_0(d,C_0,\eps)$ given in \eqref{def: sieve_disjoint} with $C_0=1$, $d=c2^{j_n}=cN_{bass}^{1/(1+2r)}$ and $1 \geq \eps>0$ small enough that $1+f_{\bbeta}(x)>0$ for all $\bbeta\in\{-1,1\}^d$. First note that by considering the partition $[0,1]=\cup_{h=1}^d I_h$ we have that $\mathbb{P}_{\bbeta}(I_h)=1/d$ for all $\bbeta\in \{-1,1\}^d$ due to symmetry and the disjointedness of the support of the selected wavelets. Furthermore, the distribution of $X^{(k)}_i$ given that $X^{(k)}_i\in I_h$ only depends on $\beta_h$ and not on $\beta_{h'}$, $h'\neq h$. Hence Assumption \ref{ass:structural} holds.

We now consider Assumption \ref{ass:W-expect-control}. The conditional density of $p(x \mid x \in I_h, \beta_h)$ is equal to $d(1+\fbb)\mathbbm{1}_{x \in I_h}$. Using the fact that the intervals $\{I_h\}_{h=1}^d$ form a partition, we see
\begin{align*}
    \mathbb{E}[W_{d,i}^{(k)}] = \sum_{h=1}^d \frac{1}{d}\int_{I_h} d(1+\fbb)W_{d,i}^{(k)}\,dx = \frac{1}{d}\sum_{h=1}^d KL\left(p(x \mid x \in I_h, \beta_h),p(x \mid x \in I_h, \beta_h')\right).
\end{align*}
The divergences on the right are equal to zero when $\beta_h = \beta_h'$ and strictly positive otherwise. This implies that for any $\bbeta$ the divergence between $1 + \fbb$ and $1 + \fbbp$ is maximized when $\beta_h \neq \beta_h'$ for all $h$, i.e. $\beta' = - \beta$. We can therefore focus on bounding the divergence between $1 + \fbb$ and $1 + \fnb = 1 - \fbb$. By construction the testing sieve satisfies $\vert\fbb\vert \leq \epsilon d^{-r}\Vert \psi \Vert_\infty/\sqrt{c} \leq \Vert \psi \Vert_\infty/\sqrt{c}$ when $\epsilon \leq 1$ and $d^{-r} \leq 1$. Using Lemma \ref{lem:log-ratio-bound} we have
\begin{align*}
2\fbb - 4f_{\bbeta}^2 \leq W_{d,i}^{(k)} &= \log\frac{1 + \fbb}{1 - \fbb} \leq 2\fbb + 4f_{\bbeta}^2. 
\end{align*}
We now calculate the expectation of $2\fbb + 4\fbb^2$, which will give the upper bound on $\mathbb{E}[W_{d,i}^{(k)}]$. We see that
\begin{align*}
    \mathbb{E}_{\bbeta} \left[\psi_{j_nh}(X_i^{(k)})\right] &= \int_0^1 \psi_{j_nh}(x)(1 +\fbb)\,dx = 0 + \epsilon d^{-r - 1/2}\beta_h \leq \epsilon d^{-r - 1/2}, \\
    \mathbb{E}_{\bbeta} \left[\psi_{j_nh}(X_i^{(k)})^2\right] &= \int_0^1 \psi_{j_nh}^2(x)(1 +\fbb)\,dx \leq 1 + \epsilon d^{-r}\frac{\Vert \psi \Vert_\infty}{\sqrt{c}}. 
\end{align*}
Using these results we calculate
\begin{align*}
    \mathbb{E}_{\bbeta}\left[W_{d,i}^{(k)}\right] &\leq 2\mathbb{E}[\fbb] + 4\mathbb{E}[f_{\bbeta}^2] \leq 2d(\epsilon d^{-r-\frac{1}{2}})(\epsilon d^{-r - \frac{1}{2}}) + 4d(\epsilon^2 d^{-2r - 1})(1 + \epsilon d^{-r}\frac{\Vert \psi \Vert_\infty}{\sqrt{c}}) \\
    &\leq  2\epsilon^2 d^{-2r} + 4\epsilon^2d^{-2r} + 4\epsilon^2d^{-3r}\frac{\Vert \psi \Vert_\infty}{\sqrt{c}} \leq C_r\epsilon^2 d^{-2r},
\end{align*}
for a constant $C_r$ that only depends on $r$. 

Now we consider Assumption \ref{ass:tail}. The event $E_h^{(k)}(a_n)$ only depends on $\beta_h$ and $\beta_h'$. If $\beta_h = \beta_h'$, then the probability of the event is zero because the log-likelihood ratio is zero. So consider $\beta_h \neq \beta_h'$. This means the bounds derived above a fortiori apply for $X_i^{(k)} \in I_h = \chi_h$. 


Let us introduce the latent variable $U_i^{(k)}$  such that  $U_i^{(k)}=h$ if $X_i^{(k)}\in I_h$ and let $U^{(k)}= (U_i^{(k)})_{i=1,...,n}$. Define the event 
\begin{align*}
    D_h^{(k)} = \left[ \frac{n}{2d} \leq \sum_{i=1}^n \mathbbm{1}_{U_i^{(k)} = h} \leq \frac{2n}{d}\right],
\end{align*}
which requires the number of observations in $I_h$ to be close to the expected value of $n/d$. In view of Chernoff's bound (see for instance Lemma 5.2 of \cite{szabo_adaptive_2019} for its application to the present setting), we know that $\mathbb{P}(D_h^{(k)}) = 1 + o(m^{-2}d^{-2})$. Using the law of total probability we have
\begin{align*}
    \mathbb{P}(E_h^{(k)}(a_n))  \leq \mathbb{P}(E_h^{(k)}(a_n)\mid D_h^{(k)}) + o(m^{-2}d^{-2}),
\end{align*}
so it suffices to bound the conditional probability to be $o(m^{-2}d^{-2})$.  We now want to apply Lemma \ref{lem:gen-prob-lemma} to bound the probability. The sequence $X_i^{(k)}$ is iid for $i = 1,\ldots, n$ and we take the function $h(X_i^{(k)}) = W_d^{(k)}(X_i^{(k)}, \beta,\beta_h)$. The sum $S_n = \sum_{i=1}^n h(X_i^{(k)})\mathbbm{1}_{X_i^{(k)} \in I_h}$ is exactly the quantity for which we seek to establish a tail bound. Let $\chi_n = I_h$ be the set of interest. The length of the support of $W_{d,i}^{(k)} = h(X_i^{(k)})$ conditional on $X_i^{(k)} \in I_h$ is bounded by $C_r'\epsilon^2 d^{-2r}$ for a constant $C_r'$ that only depends on $r$. From $\mathbb{P}(X_i^{(k)} \in I_h) = 1/d$ and the above calculation we get
\begin{align*}
   \mathbb{E}[W_{d,i}^{(k)} \mid X_i^{(k)} \in I_h] = \int_{I_h} W_{d}^{(k)}(x, \beta_h, \beta_h')d(1 + \psi_{j_nh}(x))\,dx \leq C_r\epsilon^2d^{-2r}.
\end{align*}
If we let $\delta_n = d^{-r}$, then Lemma \ref{lem:gen-prob-lemma} holds with $r_n = n/d$, $I = \{i: U_i^{(k)} = h\}$, and $\alpha = 2$. The lemma gives a bound of $\exp(-C' d^{2r}\log{N}) = o(m^{-2}d^{-2})$ for each of the finitely many combinations of $\beta_h$ and $\beta_h'$. This establishes that Assumption \ref{ass:tail} holds.\\

\textbf{Upper bound:} We verify Assumption \ref{ass:UB} directly. Consider the local estimator $\hat{f}_{n,jh}^{(k)} = \frac{1}{n}\sum_{i=1}^n \psi_{jh}(X_i^{(k)})$. We see that $\mathbb{E}[\hat{f}_{n,jh}^{(k)}] = \int \psi_{jh}(x)f(x) = f_{jh}$. As the target density $f$ is continuous on a compact interval, we can pick a $C$ such that $f(x) \leq C$ for all $x \in [0,1]$. The variance of $\hat{f}_{n,jh}^{(k)}$ is bounded by the second moment, which satisfies $\text{var}(\hat{f}_{n,jh}^{(k)}) \leq \frac{1}{n}\int \psi_{jh}^2f \leq C/n$. Now we can establish the tail bounds using Hoeffding's inequality. As the wavelet $\psi_{jh}$ is bounded by $|\psi_{jh}| \leq CN^{1/(1+2r)}$, we see that the estimator satisfies
\begin{align*}
    \mathbb{P}\left(\hat{f}_{n,jh}^{(k)} > \sqrt{N}\right) \leq \exp\left(-\frac{2n^2\left(\sqrt{N} - f_{jh}\right)^2}{2nCN^{1/(1+2r)}}\right) \leq e^{-Cn} = o(m^{-1}N^{(3+2r)/(1+2r)})
\end{align*}
as long as $m$ is polynomial in $n$. Therefore all parts conditions of Assumption \ref{ass:UB} are satisfied and Theorem \ref{thm:gen_upper_bound_thm} holds. 


\subsection{Proof of Proposition \ref{prop:binary-regression} (binary regression)} \label{sec: proof:prop:binary}
This model can be written in the form \eqref{eq:model:reg}, hence it is sufficient to verify the assumptions given in Section \ref{sec:suff-conds}.\\

\noindent{\textbf{Lower bound:}} Similarly to the proof of Proposition \ref{prop:nonparametric-regression} it is sufficient to show that Assumption \ref{ass:expo_design} holds.

We consider the sieve $\mathcal{F}_0(d,C_0,\eps)$ given in \eqref{def: sieve_disjoint} with $C_0=1/2$, $d=c2^{j_n}=c\Nbass^{1/(1+2r)}$ and $\eps>0$ small enough to ensure that $0 \leq  \tilde{\fbb}=\fbb+1/2 \leq 1$. Note that $1 - \tilde{\fbb} = \tfnb$, then a term in the log-likelihood-ratio is
\begin{align*}
    W_{d,i}^{(k)} &= \log \frac{\tilde{\fbb}(T_i^{(k)})^{X_i^{(k)}}\big(1 - \tilde{\fbb}(T_i^{(k)})\big)^{1-X_i^{(k)}}}{ \tfnb(T_i^{(k)})^{X_i^{(k)}}\big(1- \tfnb(T_i^{(k)})\big)^{1 - X_i^{(k)}}}\\
& = (2X_i^{(k)} - 1) \log\frac{\tilde{\fbb}}{ \tfnb}(T_i^{(k)})
\end{align*}
hence $h(x) = 2x-1, g_1(t) = \log(\tilde{\fbb}/ \tfnb)(t)$, and $g_2(t) = 0$.

The expectation conditional on $T_i^{(k)}$ is
\begin{align*}
    \mathbb{E}[W_{d,i}^{(k)} \mid T_i^{(k)}] = 2\fbb(T_i^{(k)})\log\frac{\tilde{\fbb}(T_i^{(k)})}{ \tfnb(T_i^{(k)})} = 2\fbb(T_i^{(k)})\log\frac{1 + 2\fbb(T_i^{(k)})}{1 - 2\fbb(T_i^{(k)})}.
\end{align*}
In view of Lemma \ref{lem:log-ratio-bound}, when $\epsilon$ is small enough to ensure $\fbb \leq 1/4$, we have 
\begin{align*}
   0 \leq \mathbb{E}[W_{d,i}^{(k)} \mid T_i^{(k)}] \leq 16f_{\bbeta}^2 \lesssim \eps^2 d^{-2r}.
\end{align*}
This establishes that the length of the support is $O(d^{-2r})$. By Hoeffding's lemma, any bounded random variable with support $[a,b]$ is sub-Gaussian with parameter $(b-a)^2/4$, so we know $(h(X_i^{(k)})|T_i^{(k)})= (2X_i^{(k)} -1|T_i^{(k)}) \sim SE(1,0)$. Furthermore, again in view of Lemma \ref{lem:log-ratio-bound} and by noting that $\nu^2(T_i^{(k)})=1$, we have
\begin{align*}
   \Big| \log\frac{1+2\fbb(T_i^{(k)})}{1-2\fbb(T_i^{(k)})} \Big|=O(d^{-r}) \implies 0 \leq \nu^2(T_i^{(k)})g_1^2(T_i^{(k)}) \leq Cd^{-2r}.
\end{align*}
We also have $g_1(T_i^{(k)})b(T_i^{(k)}) = 0=O(d^{-r}) $, as needed. 

Next note that  $\mathbb{E}[W_{d,i}^{(k)} \mid T_i^{(k)}] \leq cd^{-2r}$ implies $\mathbb{E}[W_{d,i}^{(k)}]=O(d^{-2r})$. Finally we establish a lower bound on $\mathbb{E}[\nu^2(T_i^{(k)})g_1^2(T_i^{(k)})\mathbbm{1}_{T_i^{(k)} \in I_h}]$. When $\epsilon$ is small enough that $|2\fbb| \leq (\sqrt{2} - 1)/(\sqrt{2} + 1)$, we can apply Lemma \ref{lem:log-ratio-squared} to get 
\begin{align*}
    g_1(T_i^{(k)})^2 = \left(\log\frac{1+2\fbb(T_i^{(k)})}{1-2\fbb(T_i^{(k)})}\right)^2 \geq 2\fbb(T_i^{(k)})^2. 
\end{align*}
Taking the expectation gives $\mathbb{E}[g_1^2(T_i^{(k)})\mathbbm{1}_{T_i^{(k)} \in I_h}] \geq 2 \int_{I_h} \fbb^2(t) dt\gtrsim d^{-1-2r}$, finishing the proof of the statement. \\

\noindent\textbf{Upper bound:} As with the nonparametric regression case it is sufficient to prove that the conditions of Assumption \ref{ass:UB-suff-cond} hold. We have $X \mid T \sim \text{Bern}(f(T))$, hence we take $h(X) = X$ from which it is immediate that $\mathbb{E}[X \mid T] = f(T)$. The function $h(X)$ is bounded, so the moment condition is also satisfied.  


\subsection{Proof of Proposition \ref{prop:poisson-regression} (Poisson regression)}\label{sec: proof:prop:poisson}
This model can be also written in the form \eqref{eq:model:reg}, hence it is sufficient to verify the assumptions given in Section \ref{sec:suff-conds}.\\

\textbf{Lower bound:} Similarly to before it is sufficient to show that Assumption \ref{ass:expo_design} holds. We consider the sieve $\mathcal{F}_0(d,C_0,\eps)$ given in \eqref{def: sieve_disjoint} with $C_0=1$, $d=c2^{j_n}=c\Nbass^{1/(1+2r)}$ and $\eps>0$ small enough to ensure that $0 < \fbb $.

First note that the log-likelihood-ratio is
\begin{align*}
    W_{d,i}^{(k)} &= X_i^{(k)}\log\frac{1 + \fbb(T_i^{(k)})}{1 - \fbb(T_i^{(k)})} - 2\fbb(T_i^{(k)})\\
& = \left(X_i^{(k)} - (1 + \fbb(T_i^{(k)}))\right)\log\frac{1 + \fbb(T_i^{(k)})}{1-\fbb(T_i^{(k)})}\\
&\qquad\qquad - \left(2\fbb(T_i^{(k)}) -(1+\fbb(T_i^{(k)}))\log\frac{1+\fbb(T_i^{(k)})}{1-\fbb(T_i^{(k)})}\right), 
\end{align*}
{hence $g_1(t)=\log\frac{1 + \fbb}{1-\fbb}(t)$, $g_2(t)= 2\fbb(t) -(1+\fbb(t))\log\frac{1+\fbb(t)}{1-\fbb(t)}$ and $h(x) = x - (1 + \fbb(t))$}. 

First note that in view of Lemma \ref{lem:Poisson-sub-expo}, $h(X_i^{(k)})|T_i^{(k)} = X_i^{(k)} - \big(1 + \fbb(T_i^{(k)})\big)|T_i^{(k)}$ is sub-exponential with parameters $\nu^2(T_i^{(k)}) = 2(1 + \fbb(T_i^{(k)}))$ and $b(T_i^{(k)})=1$. The conditional expectation is
\begin{align*}
    \mathbb{E}[W_{d,i}^{(k)} \mid T_i^{(k)}] =-g_2(T_i^{(k)})=-2\fbb (T_i^{(k)})+\big(1+\fbb(T_i^{(k)})\big)\log\frac{1+\fbb(T_i^{(k)})}{1-\fbb(T_i^{(k)})}.
\end{align*}
Then by Lemma \ref{lem:log-ratio-bound}
\begin{align*}
    &2\fbb -(1+\fbb)\log\frac{1+\fbb}{1-\fbb} \geq 2\fbb - (1+\fbb)(2\fbb + 4\fbb^2) = -6\fbb^2 - 4\fbb^3 \gtrsim -d^{-2r},\\
    &2\fbb -(1+\fbb)\log\frac{1+\fbb}{1-\fbb} \leq 2\fbb - (1 + \fbb)(2\fbb - 4\fbb^2) = 2\fbb^2 + 4\fbb^3 \lesssim d^{-2r},
\end{align*}
resulting in a bounded support of sufficiently short length
\begin{align*}
    \max_{t}\mathbb{E}[W_{d,i}^{(k)} \mid T_i^{(k)}=t] - \min_{t} \mathbb{E}[W_{d,i}^{(k)} \mid T_i^{(k)}=t] =O( d^{-2r}).
\end{align*}

We next consider the term $\nu^2(t)g_1^2(t)\geq 0$. Taking square roots in Lemma \ref{lem:log-ratio-squared} we have
\begin{align*}
 \Big|\log\frac{1+\fbb(t)}{1-\fbb(t)}\Big| \leq Cd^{-r},
\end{align*}
which in turn implies 
\begin{align*}
  0 \leq \nu^2(t)g_1^2(t) \leq 2(1 + Cd^{-r})(Cd^{-2r}) \lesssim Cd^{-2r}.
\end{align*}
It also follows that $g_1(t)b(t) = g_1(t) \leq Cd^{-r}$.

Next we consider $\mathbb{E}[\nu^2(T_i^{(k)})g_1^2(T_i^{(k)})\mathbbm{1}_{T_i^{(k)} \in I_h}]$. Then in view of Lemma \ref{lem:log-ratio-squared} we have (for $n$ large enough or $\epsilon$ small enough to ensure $\Vert \fbb \Vert_\infty \leq (\sqrt{2} - 1)/(\sqrt{2} + 1)$)
\begin{align*}
    \nu^2g_1^2 = 2(1 + \fbb)\left[\log\frac{1+\fbb}{1-\fbb}\right]^2 \geq (1 + \fbb)\fbb^2 \geq \frac{2\fbb^2}{\sqrt{2} + 1}.
\end{align*}
Integrating against the uniform distribution gives
\begin{align*}
    \mathbb{E}[\nu^2(T_i^{(k)})g_1^2(T_i^{(k)})\mathbbm{1}_{T_i^{(k)} \in I_h}]\geq \frac{2}{\sqrt{2} + 1}\int_{I_h} \fbb^2(t) \,dt \gtrsim d^{-1-2r}.
\end{align*}

 Finally we show $\mathbb{E}[W_{d,i}^{(k)}] \lesssim d^{-2r}$. Using Lemma \ref{lem:log-ratio-bound} we have
\begin{align*}
    \mathbb{E}[W_{d,i}^{(k)} \mid T_i^{(k)}] &= \big(1 + \fbb(T_i^{(k)})\big)\log\frac{1+\fbb(T_i^{(k)})}{1-\fbb(T_i^{(k)})} - 2\fbb(T_i^{(k)})\\
& \leq \big(1+\fbb(T_i^{(k)})\big)(2\fbb(T_i^{(k)}) + 4\fbb(T_i^{(k)})^2) - 2\fbb(T_i^{(k)})\\
& = 6\fbb(T_i^{(k)})^2 + 4\fbb(T_i^{(k)})^3,
\end{align*}
which in turn implies that
\begin{align*}
    \mathbb{E}[W_{d,i}^{(k)}] = \mathbb{E}\left[\mathbb{E}[W_{d,i}^{(k)} \mid T_i^{(k)}]\right] \leq \int_0^1 6\fbb(t)^2 + 4\fbb(t)^3\,dt \lesssim d^{-2r}.
\end{align*}

\textbf{Upper bound:} To establish Theorem \ref{thm:gen_upper_bound_thm} we use the sufficient conditions in Assumption \ref{ass:UB-suff-cond}. Note that for $h(X) = X$ we have $\mathbb{E}[X \mid T] = f(T)$. In addition, the distribution of $h(X) \mid T$ is Poisson, so we know that all moments are finite. Combined with the continuity of the model in $T$, this is sufficient to establish the second point of the assumption.

\subsection{Proof of Proposition \ref{prop:heteroskedastic-regression} (heteroskedastic regression)}\label{sec: proof:prop:heteroskedastic}
This model can  also be written in the form \eqref{eq:model:reg}, hence it is sufficient to verify the assumptions given in Section \ref{sec:suff-conds}.\\

\textbf{Lower bound:} Similarly as before it is sufficient to show that Assumption \ref{ass:expo_design} holds. We consider the sieve $\mathcal{F}_0(d,C_0,\eps)$ given in \eqref{def: sieve_disjoint} with $C_0=1$, $d=c2^{j_n}=c\Nbass^{1/(1+2r)}$ and $\epsilon>0$ is chosen to ensure that $0 < 1+\fbb$. Since $X_i^{(k)} \mid T_i^{(k)} \sim N(0, 1 + \fbb(T_i^{(k)}))$, the corresponding log-likelihood-ratio is
\begin{align*}
    W_{d,i}^{(k)} = \frac{(X_i^{(k)})^2}{2}\left(\frac{1}{1- \fbb(T_i^{(k)})} - \frac{1}{1 + \fbb(T_i^{(k)})}\right) - \frac{1}{2}\log\frac{1 + \fbb(T_i^{(k)})}{1 - \fbb(T_i^{(k)})},
\end{align*}
hence $ h(X_i^{(k)}) = (X_i^{(k)})^2/2$, $g_1(T_i^{(k)})=\left(\frac{1}{1- \fbb(T_i^{(k)})} - \frac{1}{1 + \fbb(T_i^{(k)})}\right)$, $g_2(T_i^{(k)})=\frac{1}{2}\log\frac{1 + \fbb(T_i^{(k)})}{1 - \fbb(T_i^{(k)})}$.

Note that by Lemma \ref{lem:prop-of-sub-expo-rvs},
$$h(X_i^{(k)})|T_i^{(k)} \stackrel{d}{=} (1 + \fbb(T_i^{(k)})) Z^2/2 \sim SE\left((1 + \fbb(T_i^{(k)}))^2, 2(1+ \fbb(T_i^{(k)}))\right),$$ where $Z \sim N(0,1)$,  hence $Z^2 \sim SE(4,4)$. Furthermore, the conditional expectation is
\begin{align*}
    \mathbb{E}[W_{d,i}^{(k)} \mid T_i^{(k)}] =\frac{1}{2}\Big[\frac{1 + \fbb(T_i^{(k)})}{1 - \fbb(T_i^{(k)})} - 1\Big] - \frac{1}{2}\log \frac{1+ \fbb(T_i^{(k)})}{1 - \fbb(T_i^{(k)})}.
\end{align*}
Using Lemma \ref{lem:ratiobound} and Lemma \ref{lem:log-ratio-bound} we have
\begin{align*}
    2\fbb + \fbb^2 \leq \frac{1+\fbb}{1-\fbb} - 1 \leq 2\fbb + 4\fbb^2, \\
    2\fbb - 4\fbb^2 \leq \log\frac{1+\fbb}{1-\fbb} \leq 2\fbb + 4\fbb^2.
\end{align*}
Combining these bounds gives
\begin{align*}
    -\frac{3}{2}\fbb(T_i^{(k)})^2 \leq \mathbb{E}[W_{d,i}^{(k)} \mid T_i^{(k)}] \leq 4\fbb(T_i)^2,
\end{align*}
which in turn implies
\begin{align*}
\max_{t} \mathbb{E}[W_{d,i}^{(k)} \mid T_i^{(k)}=t] - \min_{t} \mathbb{E}[W_{d,i}^{(k)} \mid T_i^{(k)}=t] \leq O(d^{-2r} ).
\end{align*}
For the quantity $\nu^2g_1^2= (\frac{1+\fbb}{1-\fbb} - 1)^2$ we can use the above bound to get
\begin{align}
4\fbb^2 - 16|\fbb|^3 \leq \nu^2g_1^2 \leq 4\fbb^2 + 16|\fbb|^3 + 16\fbb^4. \label{eq:nu-g1-hetero-bound}
\end{align}
Since the term $4\fbb^2$ dominates both sides the support is of order $O(d^{-2r})$. Furthermore,
\begin{align*}
    g_1(T_i^{(k)})b(T_i^{(k)}) = 2\left(\frac{1+\fbb(T_i^{(k)})}{1-\fbb(T_i^{(k)})} - 1\right) \leq  Cd^{-r}.
\end{align*}
Finally, note that the above established $\mathbb{E}[W_{d,i}^{(k)} \mid T_i^{(k)}] \leq 4\fbb^2$ implies $\mathbb{E}[W_{d,i}^{(k)}]=O(d^{-2r})$ and by multiplying \eqref{eq:nu-g1-hetero-bound} by $\mathbbm{1}_{T_i^{(k)} \in I_h}$ and taking expectations we have $c_1 d^{-1-2r}\leq \mathbb{E}[\nu^2g_1^2\mathbbm{1}_{T_i^{(k)} \in I_h}]$ concluding the verification of Assumption \ref{ass:expo_design}.\\ 

\textbf{Upper bound:} We establish that Assumption \ref{ass:UB-suff-cond} holds for this model. Since $X \sim N(0, f(T))$ we have for $h(X)=X^2$ that $\mathbb{E}[h(X) \mid T] = f(T)$. Note that unlike in the previous examples here a different function than $h(X)=X$ was taken. We note that $X^2 \mid T \sim f(T)\chi_1^2$, which has all moments since the MGF exists. The second condition of Assumption \ref{ass:UB-suff-cond} will therefore hold if $f(T)$ is continuous.

\section{Auxiliary lemmas}\label{sec:auxiliary}
In this section we collect technical lemmas used in the proof of our main results.

\subsection{Sub-Gaussian and sub-exponential random variables}\label{sec:subexp}
In this paper we make use of sub-Gaussian and sub-exponential random variables for their tail properties. We briefly review the features of these classes of random variables that are most relevant to our work. 

\begin{defn}\label{def: subexp}
A random variable $X$ is \emph{sub-Gaussian} if there exists a $\sigma^{2} > 0$ such that
\begin{align*}
\mathbb{E}\left[ e^{\lambda(X - \mathbb{E}[X])}\right] \leq e^{\sigma^{2}\lambda^{2}/2},\quad \forall\lambda\in\mathbb{R}
\end{align*}
in which case we write $X \sim \text{SG}(\sigma^{2})$. We say that $X$ is \emph{sub-exponential} if there exist $\nu^{2} > 0$ and $b \geq 0$ such that
\begin{align*}
\mathbb{E}\left[ e^{\lambda(X - \mathbb{E}[X])}\right] \leq e^{\nu^{2}\lambda^{2}/2} \quad \text{for } |\lambda| \leq \frac{1}{b},
\end{align*}
in which case we write $X \sim\text{SE}(\nu^{2},b)$. 
\end{defn}

Note that these random variables can posses non-zero means, only the above tail bound is required on the centered random variables. We allow $b = 0$ in the definition of sub-exponential random variables (taking $1/0 = \infty$, implying that the bound holds for all $\lambda$) so that $\text{SE}(\nu^{2}, 0) = \text{SG}(\nu^{2})$. Recall that bounded random variables $X \in [a,b]$ are sub-Gaussian with $\sigma^{2} = (b-a)^{2}/4$. 

Next we recall the Hoeffding's tail bound for sub-exponential random variables, see Proposition 2.9 of \cite{Wainwright} for a proof.
\begin{lem}[Sub-exponential tail bound]\label{lem:sub:exp:tail}
Suppose that $X \sim \text{SE}(\nu^{2}, b)$. Then
\begin{align*}
\mathbb{P}\left(X \geq \mathbb{E}[X]+ t\right) \leq \begin{cases} e^{-\frac{t^{2}}{2\nu^{2}}}, & 0 \leq t \leq \nu^{2}/b, \\
e^{-\frac{t}{2b}}, & t > \nu^{2}/b. \end{cases}
\end{align*}
\end{lem}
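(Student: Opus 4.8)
The plan is to use the standard Chernoff (exponential Markov) bound together with the control on the moment generating function built into the definition of $\mathrm{SE}(\nu^2,b)$. Without loss of generality I would first center, replacing $X$ by $X-\mathbb{E}[X]$, so that it suffices to bound $\mathbb{P}(X\geq t)$ for $X\sim\mathrm{SE}(\nu^2,b)$ with $\mathbb{E}[X]=0$ and $t\geq 0$. For any $\lambda\in[0,1/b]$, Markov's inequality applied to $e^{\lambda X}$ together with the sub-exponential MGF bound gives
$$\mathbb{P}(X\geq t)\;\leq\; e^{-\lambda t}\,\mathbb{E}\big[e^{\lambda X}\big]\;\leq\; \exp\!\Big(-\lambda t+\tfrac12\nu^2\lambda^2\Big).$$
It then remains to minimize the exponent $\varphi(\lambda)=-\lambda t+\tfrac12\nu^2\lambda^2$ over the admissible interval $\lambda\in[0,1/b]$.

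Next I would carry out this constrained optimization. The map $\varphi$ is a convex parabola with unconstrained minimizer $\lambda^\star=t/\nu^2$ and minimal value $-t^2/(2\nu^2)$, and $\varphi'(\lambda)=-t+\nu^2\lambda<0$ for $\lambda<\lambda^\star$. If $t\leq\nu^2/b$, then $\lambda^\star\leq 1/b$ is feasible, so taking $\lambda=\lambda^\star$ yields $\mathbb{P}(X\geq t)\leq e^{-t^2/(2\nu^2)}$. If instead $t>\nu^2/b$, then $\lambda^\star>1/b$, so $\varphi$ is decreasing throughout $[0,1/b]$ and the best feasible choice is $\lambda=1/b$, which gives exponent $\varphi(1/b)=-t/b+\nu^2/(2b^2)$; a one-line algebraic manipulation shows that $-t/b+\nu^2/(2b^2)\leq -t/(2b)$ is equivalent to $t\geq\nu^2/b$, hence $\mathbb{P}(X\geq t)\leq e^{-t/(2b)}$ in this regime. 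I would close by noting the degenerate case $b=0$, where $\mathrm{SE}(\nu^2,0)=\mathrm{SG}(\nu^2)$ and $\nu^2/b=\infty$: only the first regime occurs, and the bound $e^{-t^2/(2\nu^2)}$ holds for all $t\geq 0$, in agreement with the stated piecewise formula.

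There is no genuine obstacle here, as this is the textbook Bernstein/Chernoff computation (cf. Proposition 2.9 of \cite{Wainwright}). The only point deserving attention is that the MGF bound is valid only for $|\lambda|\leq 1/b$, so one cannot simply substitute $\lambda^\star$ when $t$ is large; recognizing that $\varphi$ is monotone on the feasible interval in that regime is exactly what produces the linear tail $e^{-t/(2b)}$ rather than the Gaussian one.
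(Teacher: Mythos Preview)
Your proposal is correct and is exactly the standard Chernoff argument; the paper does not give its own proof but simply refers to Proposition 2.9 of \cite{Wainwright}, which is precisely the computation you wrote out.
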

Note that a similar bound for $\mathbb{P}(|X - \mathbb{E}[X]| \geq t)$ holds by including a multiplicative factor of two on the right hand side. In the case when $b=0$, we see that the bound for sub-Gaussian random variables is $\exp(-t^{2}/(2\nu^{2}))$.  
\begin{lem}[Properties of sub-exponential random variables] \label{lem:prop-of-sub-expo-rvs}
If $X_i \stackrel{iid}{\sim} \text{SE}(\tau_i^2, b_i)$ then
\begin{align*}
    aX_i &\sim \text{SE}(a^2\tau_i^2, ab_i), \\
    \sum_{i=1}^n X_i &\sim \text{SE}\Big(\sum_{i=1}^n \tau_i^2, \max_i b_i\Big).
\end{align*}
\end{lem}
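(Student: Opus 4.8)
The plan is to work directly from the moment generating function characterization in Definition \ref{def: subexp}, since both claims are really statements about MGFs of centered random variables. For the scaling claim, I would start with the hypothesis that for $|\lambda|\le 1/b_i$ we have $\mathbb{E}[e^{\lambda(X_i-\mathbb{E}X_i)}]\le e^{\nu_i^2\lambda^2/2}$ (I will write $\tau_i^2$ as in the statement; note the statement calls the parameter $\tau_i^2$, so I use that). Then $\mathbb{E}[e^{\lambda(aX_i-\mathbb{E}[aX_i])}] = \mathbb{E}[e^{(a\lambda)(X_i-\mathbb{E}X_i)}]$, and this is bounded by $e^{\tau_i^2(a\lambda)^2/2} = e^{(a^2\tau_i^2)\lambda^2/2}$ provided $|a\lambda|\le 1/b_i$, i.e. provided $|\lambda|\le 1/(|a|b_i)$. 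Hence $aX_i\sim\text{SE}(a^2\tau_i^2,|a|b_i)$. (One should write $|a|b_i$ rather than $ab_i$; if $a$ is allowed to be negative this is the correct parameter, and the statement should be read accordingly. I would add a parenthetical remark to this effect, or simply assume $a>0$ which is all that is used in the applications.) The degenerate case $b_i=0$ is handled by the convention $1/0=\infty$, so the bound holds for all $\lambda$ and the argument goes through verbatim.

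For the sum, I would use independence to factor the joint MGF. Writing $S_n=\sum_{i=1}^n X_i$ and using $\mathbb{E}S_n=\sum_i\mathbb{E}X_i$, we get
\begin{align*}
\mathbb{E}\big[e^{\lambda(S_n-\mathbb{E}S_n)}\big] = \prod_{i=1}^n \mathbb{E}\big[e^{\lambda(X_i-\mathbb{E}X_i)}\big] \le \prod_{i=1}^n e^{\tau_i^2\lambda^2/2} = e^{(\sum_i\tau_i^2)\lambda^2/2},
\end{align*}
where the inequality is valid as long as $|\lambda|\le 1/b_i$ for every $i$, i.e. as long as $|\lambda|\le \min_i 1/b_i = 1/\max_i b_i$. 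This is exactly the sub-exponential bound with parameters $(\sum_i\tau_i^2,\max_i b_i)$, proving the claim. The only place independence enters is the factorization of the MGF; the rest is bookkeeping on the admissible range of $\lambda$.

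I do not anticipate a genuine obstacle here — the lemma is a standard closure property and the proof is two short MGF manipulations. The one subtlety worth flagging in the writeup is the constant in the scaling claim ($|a|$ versus $a$ in the second parameter), and making sure the range restriction $|\lambda|\le 1/b_i$ is tracked correctly through both the multiplication by $a$ and the intersection over $i=1,\dots,n$; conflating these ranges is the only way to go wrong. I would also note explicitly that the hypothesis "$X_i\sim\text{SE}(\tau_i^2,b_i)$ iid" is stronger than needed: independence (not identical distribution) suffices for the sum, and nothing is needed for the scaling claim, but I would keep the statement as is since it matches how the lemma is invoked.
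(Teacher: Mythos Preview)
Your proof is correct and follows essentially the same MGF-based approach as the paper: the paper proves the scaling claim with the identical one-line MGF substitution, and for the sum simply cites Wainwright (page 29), whose argument is exactly the factorization you wrote out. Your remark about $|a|$ versus $a$ is a valid caveat that the paper does not address.
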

\begin{proof}
The second result is proved on page 29 of \cite{Wainwright}. The first result is straight-forward but not widely available. We have for the moment generating function of the centered random variable that
\begin{align*}
    M_{aX_i}(\lambda) = M_{X_i}(a\lambda) \leq \exp\Big(\frac{\lambda^2a^2\tau_i^2}{2}\Big), \qquad \text{for}\, |a\lambda| \leq 1/b_i,
\end{align*}
so the MGF is bounded for $|\lambda| \leq (ab_i)^{-1}$. 
\end{proof}

\begin{lem}[Poisson is sub-exponential] \label{lem:Poisson-sub-expo}
Suppose $X \sim \text{Pois}(\lambda)$, then $(X - \lambda) \sim SE(2\lambda, 1)$.
\end{lem}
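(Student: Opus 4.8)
The plan is to verify the moment generating function bound in Definition \ref{def: subexp} directly, using the explicit form of the Poisson MGF. Recall that for $X \sim \text{Pois}(\lambda)$ one has $\mathbb{E}[e^{sX}] = e^{\lambda(e^s - 1)}$ for all $s \in \mathbb{R}$, so the MGF of the centered variable is
\begin{align*}
\mathbb{E}\big[e^{s(X - \lambda)}\big] = \exp\big(\lambda(e^s - 1 - s)\big).
\end{align*}
To conclude $(X-\lambda) \sim SE(2\lambda, 1)$ it suffices to show that this is at most $\exp(\lambda s^2) = \exp\big((2\lambda)s^2/2\big)$ whenever $|s| \leq 1$, i.e. to establish the elementary inequality $e^s - 1 - s \leq s^2$ for all $|s| \leq 1$.

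For the elementary inequality I would expand in a power series: $e^s - 1 - s = \sum_{k \geq 2} s^k/k!$, and bound
\begin{align*}
e^s - 1 - s \leq \sum_{k \geq 2} \frac{|s|^k}{k!} = s^2 \sum_{k \geq 2} \frac{|s|^{k-2}}{k!} \leq s^2 \sum_{k \geq 2} \frac{1}{k!} = (e - 2)\, s^2 \leq s^2,
\end{align*}
where the second inequality uses $|s| \leq 1$ and the last uses $e - 2 < 1$. Plugging this into the displayed expression for the centered MGF gives $\mathbb{E}[e^{s(X-\lambda)}] \leq e^{\lambda s^2}$ for $|s| \leq 1$, which is exactly the defining bound with $\nu^2 = 2\lambda$ and $b = 1$.

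There is no real obstacle here: the only mild point to be careful about is the sign of $s$ in the power-series bound (the inequality $e^s - 1 - s \leq \sum_{k\geq 2}|s|^k/k!$ holds for negative $s$ as well, since the left-hand side is dominated by its absolute value and $|s|^{k-2} \leq 1$ requires only $|s|\leq 1$, not $s \geq 0$). Everything else is a one-line computation.
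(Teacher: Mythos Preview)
Your proof is correct and follows essentially the same route as the paper: both compute the centered Poisson MGF as $\exp(\lambda(e^{s}-1-s))$ and reduce to the elementary inequality $e^{s}-1-s\le s^{2}$ for $|s|\le 1$. The only cosmetic difference is that the paper simply recalls this inequality, whereas you supply a short power-series justification (yielding the slightly sharper constant $e-2$).
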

\begin{proof}
Recall that $e^x \leq 1 + x + x^2$ for $|x| \leq 1$. The moment generating function of $X - \lambda$ is
\begin{align*}
    \mathbb{E}\left[e^{t(X - \lambda)}\right] &= e^{-t\lambda}\mathbb{E}\left[ e^{tX}\right]  = e^{-t\lambda}e^{\lambda(e^t - 1)}  \leq e^{-t\lambda}e^{\lambda t + \lambda t^2}  = e^{t^2\tau^2/2}
\end{align*}
for $|t| \leq 1$ and $\tau^2 = 2\lambda$.
\end{proof}

\subsection{Additional technical lemmas}
\begin{lem}[General sum bound] \label{lem:gen-prob-lemma}
{Suppose $X_1,\ldots, X_n$ are iid random variables. Let $\chi_n$ be a subset of the support of these random variables and define the corresponding random index set $I_n = \{i: X_i \in \chi_n\}$. Assume that for a measurable function $h$ there exists a sequence $\delta_n=o(1)$, such that
\begin{align*}
    \sup_{x \in \chi_n} h(x) - \inf_{x \in \chi_n} h(x) \leq c_1 \delta_n^\alpha \quad \text{and} \quad \mathbb{E}[h(X_i) \mid X_i \in \chi_n] \leq c_2\delta_n^2
\end{align*}
for some $\alpha \geq 1$ and positive constants $c_1$ and $c_2$. Define 
$$S_n =\sum_{i=1}^n h(X_i)\mathbbm{1}_{X_i \in \chi_n}$$
 to be the sum over $I_n$. Then the following hold:
\begin{enumerate}
    \item  For any realization of $I_n$ such that $|I_n| \leq c_3r_n$, with $\delta_n^2r_n < \log{N}$, we have $$\mathbb{E}[S_n \mid I_n] < c_2c_3\delta_n\sqrt{r_n\log{N}}.$$ 
    \item For $0\leq c_4<c_3$, $C>c_2c_3$ and $\delta_n^2r_n < \log{N}$ we have
    \begin{align}
        \mathbb{P}(S_n > C\delta_n\sqrt{r_n\log{N}} \mid  c_4r_n\leq |I_n| \leq c_3r_n) \leq e^{-C'\delta_n^{2 (1-\alpha)}\log{N}}, \label{eq:gen_sum_bound}
    \end{align}
    where $C'>0$ can be made arbitrarily large by taking $C$ large enough. 
\end{enumerate}}
\end{lem}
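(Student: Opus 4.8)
The plan is to condition on the random index set $I_n$: by independence of $X_1,\dots,X_n$, conditioning on the event $\{X_i\in\chi_n\text{ for }i\in I_n,\ X_i\notin\chi_n\text{ for }i\notin I_n\}$ makes $(X_i)_{i\in I_n}$ an iid sample from the conditional law $\mathcal{L}(X_1\mid X_1\in\chi_n)$, independent of the remaining coordinates. Once this is in place, everything reduces to a bounded-difference (Hoeffding) argument applied to $S_n=\sum_{i\in I_n}h(X_i)$ conditionally on $I_n$.

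For part 1, from the conditional iid structure we get $\mathbb{E}[S_n\mid I_n]=|I_n|\,\mathbb{E}[h(X_1)\mid X_1\in\chi_n]\le |I_n|\,c_2\delta_n^2$ (if the conditional mean is negative the bound only improves). On $\{|I_n|\le c_3 r_n\}$ this is at most $c_2c_3 r_n\delta_n^2$, and since the hypothesis $\delta_n^2 r_n<\log N$ gives $\delta_n\sqrt{r_n}<\sqrt{\log N}$, we obtain $c_2c_3 r_n\delta_n^2=c_2c_3(\delta_n\sqrt{r_n})(\delta_n\sqrt{r_n})<c_2c_3\,\delta_n\sqrt{r_n\log N}$, which is the first assertion.

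For part 2, fix a realization of $I_n$ with $c_4 r_n\le|I_n|\le c_3 r_n$. Conditionally on $I_n$ the summands $h(X_i)$, $i\in I_n$, are independent, each supported in an interval of length at most $c_1\delta_n^\alpha$ by the oscillation hypothesis, so Hoeffding's inequality gives, for every $t>0$,
$$\mathbb{P}\big(S_n-\mathbb{E}[S_n\mid I_n]\ge t\mid I_n\big)\le\exp\Big(-\tfrac{2t^2}{|I_n|\,c_1^2\delta_n^{2\alpha}}\Big).$$
By part 1, $\mathbb{E}[S_n\mid I_n]\le c_2c_3\,\delta_n\sqrt{r_n\log N}<C\,\delta_n\sqrt{r_n\log N}$ because $C>c_2c_3$; hence with $t=(C-c_2c_3)\delta_n\sqrt{r_n\log N}>0$ and $|I_n|\le c_3 r_n$,
$$\mathbb{P}\big(S_n\ge C\delta_n\sqrt{r_n\log N}\mid I_n\big)\le\exp\Big(-\tfrac{2(C-c_2c_3)^2\,\delta_n^2 r_n\log N}{|I_n|\,c_1^2\delta_n^{2\alpha}}\Big)\le\exp\Big(-\tfrac{2(C-c_2c_3)^2}{c_1^2 c_3}\,\delta_n^{2(1-\alpha)}\log N\Big).$$
This bound is uniform over all admissible realizations of $I_n$, so it is inherited by the probability conditioned on the event $\{c_4 r_n\le|I_n|\le c_3 r_n\}$, yielding \eqref{eq:gen_sum_bound} with $C'=2(C-c_2c_3)^2/(c_1^2 c_3)$, which can be made arbitrarily large by enlarging $C$.

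I do not expect a genuine obstacle here; the only points needing a little care are (i) the justification of the conditional iid description of $(X_i)_{i\in I_n}$ given $I_n$, and (ii) the degenerate case $|I_n|=0$, where $S_n=0<C\delta_n\sqrt{r_n\log N}$ makes the tail bound trivial. Note finally that since $\alpha\ge1$ and $\delta_n=o(1)$ we have $\delta_n^{2(1-\alpha)}\ge1$ eventually, so the exponent in \eqref{eq:gen_sum_bound} is at least $C'\log N$, which is the form used in the applications.
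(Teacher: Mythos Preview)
Your proof is correct and matches the paper's approach essentially line for line: condition on $I_n$, use the conditional iid structure to get $\mathbb{E}[S_n\mid I_n]=|I_n|\,\mathbb{E}[h(X_1)\mid X_1\in\chi_n]$, and then apply Hoeffding's inequality to the bounded summands. The only cosmetic difference is the Hoeffding constant (you get $C'=2(C-c_2c_3)^2/(c_1^2 c_3)$, the paper writes $C'=(C-c_2c_3)^2/(2c_1^2 c_3)$), which is immaterial since only the qualitative statement that $C'$ grows with $C$ is used.
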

\begin{proof}
Using iterated expectations by conditioning on the realization of the set $I_n$, we get
\begin{align*}
    \mathbb{E}[S_n \mid I_n] = |I_n|\,\mathbb{E}[h(X_1) \mid X_1 \in \chi_n] \leq c_2c_3\delta_n^2r_n < c_2c_3\delta_n\sqrt{r_n\log{N}}.
\end{align*}
Next note that $S_n|I_n$ can be written as a sum of $|I_n|$ iid terms, whose support length is bounded by $c_1\delta_n^\alpha$. Then by Hoeffding's inequality
\begin{align*}
      \mathbb{P}(S_n > C\delta_n\sqrt{r_n\log{N}} \mid I_n) &=   \mathbb{P}\left(S_n - \mathbb{E}[S_n \mid I_n] > C\delta_n\sqrt{r_n\log{N}} - \mathbb{E}[S_n \mid I_n]\mid I_n \right)\\
&\leq \mathbb{P}\left(\left\vert S_n - \mathbb{E}[S_n \mid I_n]\right\vert > (C- c_2c_3)\delta_n\sqrt{r_n\log{N}}\right)\\
&\leq \exp\left(-\frac{(C - c_2c_3)^2\delta_n^2r_n\log{N}}{2|I_n|c_1^2\delta_n^{2\alpha}}\right) \leq \exp\left(-C'\delta_n^{2(1 - \alpha)}\log{N}\right),
\end{align*}
with $C' = (C - c_2c_3)^2/(2c_3c_1^2)$, which can be made arbitrarily large by increasing $C$. Equation \eqref{eq:gen_sum_bound} then follows from the law of total probability by averaging over the distribution of $I_n \mid  (c_4 r_n\leq |I_n| \leq c_3r_n)$. 
\end{proof}

We now provide a number of simple inequalities that allow us to relate the log-likelihood ratio in our examples to polynomials in the testing sieve functions $\fbb$. These results \emph{do not} have optimized constants or the largest possible range of validity in $x$.

\begin{lem} \label{lem:ratiobound}
For any $-1 \leq x \leq 1/2$ the following inequalities hold:
\begin{align*}
    1 + 2x + x^2 \leq \frac{1+x}{1-x} \leq 1 + 2x + 4x^2,\\
 x^2 \leq \left(\frac{1+x}{1-x} - 1\right)^2 \leq 16x^2.
\end{align*}
\end{lem}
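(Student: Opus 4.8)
The plan is to reduce every inequality to an elementary polynomial sign condition using the single algebraic identity
\[
\frac{1+x}{1-x} = 1 + \frac{2x}{1-x},
\]
which is legitimate because $1-x \geq 1/2 > 0$ throughout the range $-1 \leq x \leq 1/2$.

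For the two-sided bound on $\frac{1+x}{1-x}$ I would subtract $1$ from all three parts, reducing the claim to $2x + x^2 \leq \frac{2x}{1-x} \leq 2x + 4x^2$. Since $1-x>0$, one may multiply through by $1-x$ without reversing inequalities. For the lower bound this leaves $2x \geq x(2+x)(1-x)$, and expanding and cancelling turns this into $x^2(1+x) \geq 0$, which holds because $x \geq -1$. For the upper bound it leaves $2x \leq 2x(1+2x)(1-x)$, i.e. $2x^2(1-2x) \geq 0$, which holds because $x \leq 1/2$. These two sign conditions are precisely where the endpoints $-1$ and $1/2$ are used.

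For the second pair of inequalities I would write $\frac{1+x}{1-x} - 1 = \frac{2x}{1-x}$, so that the middle quantity equals $\frac{4x^2}{(1-x)^2}$. Both bounds are trivial when $x=0$; for $x \neq 0$ I would divide by $x^2 > 0$, reducing them to $1 \leq \frac{4}{(1-x)^2} \leq 16$, equivalently $\tfrac14 \leq (1-x)^2 \leq 4$. On $-1 \leq x \leq 1/2$ we have $\tfrac12 \leq 1-x \leq 2$, and squaring this chain gives exactly the required bounds.

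There is no genuine obstacle here; the only point requiring care is that each multiplication or division is by a quantity of definite sign ($1-x>0$, $x^2 \geq 0$), which is exactly what pins down the admissible interval for $x$. An alternative would be to treat each inequality as a one-variable monotonicity/sign check on $[-1,1/2]$, but the polynomial-factoring route above is shorter and fully self-contained.
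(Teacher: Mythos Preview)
Your proof is correct and follows essentially the same route as the paper. For the first pair of inequalities you arrive at exactly the same polynomial sign conditions $x^2(1+x)\ge 0$ and $2x^2(1-2x)\ge 0$; for the second pair you bound $(1-x)^2$ via the range of $1-x$, whereas the paper factors the equivalent quadratics as $(x-3)(x+1)\le 0$ and $(x-1/2)(x-3/2)\ge 0$, which is the same content in a different wrapper.
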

\begin{proof}
By elementary algebra one can restate the first line as
\begin{align*}
0 \geq -x^2(1 + x)\qquad\text{and} \qquad  0 \leq 2x^2(1 - 2x),
\end{align*}
and the second line as 
\begin{align*}
(x-3)(x+1) \leq 0\qquad\text{and}\qquad (x-1/2)(x-3/2) \geq 0,
\end{align*}
 which all hold for $x\in[-1,1/2]$.
\end{proof}

\begin{lem} \label{lem:logbound}
For $|x| \leq 1/2$,
\begin{align*}
    -x - x^2 \leq \log(1-x).
\end{align*}
\end{lem}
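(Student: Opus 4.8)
The plan is to reduce the claimed inequality to a one-variable monotonicity argument. Define $g(x) = \log(1-x) + x + x^2$ on $[-1/2,1/2]$; note that $1-x>0$ there, so $g$ is well-defined and differentiable, and the assertion of the lemma is precisely $g(x)\ge 0$ on this interval.

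The first step is to record the anchor value $g(0)=0$. The second step is to differentiate and simplify: $g'(x) = -\tfrac{1}{1-x} + 1 + 2x$, and putting this over the common denominator $1-x$ gives $g'(x) = \tfrac{x(1-2x)}{1-x}$ after cancellation. The third and final step is a sign analysis of $g'$ on the two subintervals. On $[-1/2,0]$ we have $x\le 0$, $1-2x\ge 1>0$ and $1-x>0$, hence $g'(x)\le 0$; so $g$ is nonincreasing there, and together with $g(0)=0$ this gives $g(x)\ge g(0)=0$ for $x\in[-1/2,0]$. On $[0,1/2]$ we have $x\ge 0$, $1-2x\ge 0$ and $1-x>0$, hence $g'(x)\ge 0$; so $g$ is nondecreasing there and again $g(x)\ge g(0)=0$. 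Combining the two cases yields the lemma.

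There is no real obstacle here; the only mild care needed is in the algebraic simplification of $g'$ and in keeping the sign bookkeeping consistent across the two subintervals. An alternative route via the power series $\log(1-x)=-\sum_{k\ge 1}x^k/k$, so that $g(x)=\tfrac{x^2}{2}-\sum_{k\ge 3}x^k/k$, also works after bounding the tail $|\sum_{k\ge 3}x^k/k|\le \tfrac{|x|^3}{3(1-|x|)}$, but it is messier for negative $x$, so the monotonicity argument is preferable.
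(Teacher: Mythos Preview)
Your proof is correct and essentially identical to the paper's: the paper likewise sets $h(x)=\log(1-x)+x+x^2$, computes $h'(x)=x(1-2x)/(1-x)$, and uses the sign of $h'$ to conclude that the minimum on $[-1/2,1/2]$ is $h(0)=0$.
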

\begin{proof}
Define the function
\begin{align*}
    h(x) = \log(1-x) + x + x^2.
\end{align*}
It is sufficient to show that $h(x) \geq 0$ for $|x| \leq 1/2$. The derivative of $h$ is
\begin{align*}
    h'(x) &= x(1 - 2x)(1 -x)^{-1}.
\end{align*}
Since $h'(x)$ is negative for $-1/2 \leq x < 0$ and non-negative for $0 \leq x \leq 1/2$, we see that the minimum is achieved at $x = 0$. As $h(0) = 0$, we have $h(x) \geq 0$, which proves the inequality. 
\end{proof}
We now combine the previous two lemmas to create a bound on $\log((1+\fbb)/(1-\fbb))$. 
\begin{lem}[Log ratio bound]\label{lem:log-ratio-bound}
For $|x| \leq 1/2$ we have
\begin{align*}
    2x - 4x^2 \leq \log\frac{1+x}{1-x} \leq 2x + 4x^2 \quad \text{and} \quad 
    0 \leq x\log\frac{1+x}{1-x} \leq 4x^2.
\end{align*}
\begin{proof}
From Lemma \ref{lem:ratiobound} and the inequality $\log(1 + x) \leq x$ we have the upper bound
\begin{align*}
    \log\frac{1+x}{1-x} \leq \log(1+2x+4x^2) \leq 2x+4x^2.
\end{align*}
For the lower bound, we use Lemma \ref{lem:ratiobound} and \ref{lem:logbound}:
\begin{align*}
   \log\frac{1+x}{1-x} \geq \log(1 + 2x + x^2) \geq \log(1 + 2x) \geq 2x - 4x^2
\end{align*}
when $|x| \leq 1/2$, which establishes the first result. To establish the second result, first consider $0 \leq x \leq 1/2$. In this case, the result just established gives
\begin{align*}
    x\log\frac{1+x}{1-x} \leq x(2x + 4x^2) \leq 4x^2
\end{align*}
for $x \in [0,1/2]$. When $x \in [-1/2,0)$, we use the lower bound just established to get
\begin{align*}
    x\log\frac{1+x}{1-x} \leq x(2x - 4x^2) \leq 4x^2,
\end{align*}
which establishes the upper bound for all $|x| \leq 1/2$. The product is non-negative because $x$ and $\log((1+x)/(1-x))$ always have the same sign. 
\end{proof}
\end{lem}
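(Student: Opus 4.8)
The plan is to derive the two–sided bound on $\log\frac{1+x}{1-x}$ from elementary logarithm inequalities --- $\log(1+y)\le y$ for $y>-1$ together with Lemma~\ref{lem:logbound} --- and then to obtain the bound on $x\log\frac{1+x}{1-x}$ from the first display by multiplying through by $x$ and treating the two sign cases $x\ge 0$ and $x<0$ separately.

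For the first display I would write $\log\frac{1+x}{1-x}=\log(1+x)-\log(1-x)$ and bound each term. Using $\log(1+y)\le y$ with $y=x$ and with $y=-x$ gives $\log(1+x)\le x$ and $\log(1-x)\le -x$; Lemma~\ref{lem:logbound} applied with its variable equal to $x$ and to $-x$ (both admissible since $|x|\le 1/2$) gives $\log(1-x)\ge -x-x^2$ and $\log(1+x)\ge x-x^2$. Combining the four inequalities yields $2x-x^2\le \log\frac{1+x}{1-x}\le 2x+x^2$, which a fortiori gives the stated bound with $4x^2$. Alternatively one may route through Lemma~\ref{lem:ratiobound}: $\frac{1+x}{1-x}\le 1+2x+4x^2$ combined with $\log(1+y)\le y$ gives the upper bound, and $\frac{1+x}{1-x}\ge 1+2x+x^2\ge 1+2x$ combined with Lemma~\ref{lem:logbound} gives $\log\frac{1+x}{1-x}\ge \log(1+2x)\ge 2x-4x^2$.

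For the second display, $x\log\frac{1+x}{1-x}\ge 0$ since $x$ and $\log\frac{1+x}{1-x}$ always share a sign and both vanish at $x=0$. For the upper bound: when $0\le x\le 1/2$, multiply $\log\frac{1+x}{1-x}\le 2x+4x^2$ by $x\ge 0$ to get $2x^2+4x^3\le 4x^2$ (using $4x^3\le 2x^2$ for $x\le 1/2$); when $-1/2\le x<0$, multiply $\log\frac{1+x}{1-x}\ge 2x-4x^2$ by $x<0$, which reverses the inequality, to get $x\log\frac{1+x}{1-x}\le 2x^2-4x^3\le 4x^2$. The only step needing care --- and the sole place where the precise interval $|x|\le 1/2$ enters --- is the lower bound: the chain through $\log(1+2x)$ is valid only for $|2x|\le 1/2$, so the cleaner route is the direct decomposition of the previous paragraph, which holds on the whole interval; in every application in this paper one takes $x=\fbb\to 0$, so the distinction is immaterial. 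The remaining manipulations are routine single–variable algebra, with no genuine obstacle.
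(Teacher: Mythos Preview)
Your proposal is correct, and in fact your primary route is cleaner than the paper's. The paper argues the first display by passing through Lemma~\ref{lem:ratiobound}: it bounds the ratio $\frac{1+x}{1-x}$ above and below by quadratics and then takes logarithms, invoking Lemma~\ref{lem:logbound} at the end with the variable $2x$. Your direct decomposition $\log\frac{1+x}{1-x}=\log(1+x)-\log(1-x)$, bounding each piece separately via $\log(1+y)\le y$ and Lemma~\ref{lem:logbound} with variable $\pm x$, yields the sharper sandwich $2x-x^2\le\log\frac{1+x}{1-x}\le 2x+x^2$ and is valid on the whole interval $|x|\le 1/2$. You are also right to flag the range issue in the paper's chain: the final step $\log(1+2x)\ge 2x-4x^2$ is an application of Lemma~\ref{lem:logbound} with argument $2x$, which that lemma only guarantees for $|2x|\le 1/2$; indeed one can check that $\log(1+2x)\ge 2x-4x^2$ fails, e.g., at $x=-0.4$. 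Your decomposition sidesteps this entirely. For the second display your argument (multiply by $x$, split on the sign of $x$, use $|x|\le 1/2$ to absorb the cubic term) is exactly the paper's.
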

\begin{lem}[Bound on log squared] \label{lem:squared-log-bound}
For $0 < x < \sqrt{2}$ we have
\begin{align*}
    \left(\log{x}\right)^2 \geq \frac{(x-1)^2}{2}.
\end{align*}
\begin{proof}
First consider $0 < x < 1$. Then it suffices to establish that $0 > (x-1)/\sqrt{2} > \log{x}$, which follows from $x-1 > \log{x}$ (by convexity) for all $x > 0$ and $0 > (x-1)/\sqrt{2} > x - 1$ for $x < 1$. Now consider the case $1 \leq x < \sqrt{2}$, when it suffices to establish that $0 \leq (x-1)/\sqrt{2} \leq \log{x}$. Define the function $h(x) = \log{x} - (x-1)/\sqrt{2}$. The statement follows by noting that $h(1) = 0$ and $h'(x) = 1/x - 1/\sqrt{2} > 0$ for $x < \sqrt{2}$.

\end{proof}
\end{lem}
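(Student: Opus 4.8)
The plan is to prove $(\log x)^2 \geq (x-1)^2/2$ on $0 < x < \sqrt{2}$ by splitting the range at the point $x = 1$, where both sides vanish, and treating the two resulting sub-intervals separately. The key observation is that on each sub-interval $\log x$ and $x-1$ have the same sign, so it suffices to compare $|\log x|$ with $|x-1|/\sqrt{2}$; squaring a true inequality between nonnegative quantities then recovers the claimed bound.

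First I would handle $0 < x \leq 1$. Here $\log x \leq 0$ and $x - 1 \leq 0$, and the standard concavity bound $\log x \leq x - 1$ (valid for all $x>0$) gives $|\log x| = -\log x \geq 1 - x = |x-1| \geq |x-1|/\sqrt{2}$. Squaring both sides yields $(\log x)^2 \geq (x-1)^2/2$, in fact with room to spare on this piece. Next I would handle $1 \leq x < \sqrt{2}$, where $\log x \geq 0$ and $x - 1 \geq 0$, so it is enough to show $\log x \geq (x-1)/\sqrt{2}$. I would introduce $h(x) = \log x - (x-1)/\sqrt{2}$, note $h(1) = 0$, and compute $h'(x) = 1/x - 1/\sqrt{2}$, which is strictly positive exactly when $x < \sqrt{2}$; hence $h$ is increasing on $[1,\sqrt{2})$ and $h(x) \geq h(1) = 0$. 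Squaring once more completes this case. Combining the two cases gives the lemma.

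There is essentially no obstacle here; the only points requiring care are the direction of the inequalities when passing from $\log x$ to $|\log x|$ on $(0,1)$ (we are bounding a negative quantity from below by another negative quantity), and noticing that the endpoint $\sqrt{2}$ is precisely where $h'$ changes sign, so the argument — and the constant $1/2$, which is not the optimal one near $x=1$ — is exactly what makes the elementary two-case comparison go through.
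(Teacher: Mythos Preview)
Your proof is correct and follows essentially the same approach as the paper: both split at $x=1$, use the concavity bound $\log x \leq x-1$ on $(0,1)$, and for $[1,\sqrt{2})$ study $h(x)=\log x-(x-1)/\sqrt{2}$ via $h(1)=0$ and $h'(x)=1/x-1/\sqrt{2}>0$. The only cosmetic difference is that you phrase the first case with absolute values while the paper writes the chain of negative inequalities directly.
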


\begin{lem}[Log ratio squared bound]\label{lem:log-ratio-squared}
When $-1 < x < (\sqrt{2}-1)/(\sqrt{2} + 1)$ we have
\begin{align*}
    \left(\log\frac{1+x}{1-x}\right)^2 \geq \frac{x^2}{2}.
\end{align*}
\end{lem}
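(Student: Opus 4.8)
The plan is to reduce the claim directly to Lemma \ref{lem:squared-log-bound} by the substitution $y = \frac{1+x}{1-x}$. First I would check that $y$ lies in the admissible range $(0,\sqrt{2})$ of that lemma: for $-1 < x < 1$ both $1+x$ and $1-x$ are positive, so $y > 0$, and the inequality $\frac{1+x}{1-x} < \sqrt{2}$ rearranges (multiplying by the positive quantity $1-x$) to $x(1+\sqrt{2}) < \sqrt{2}-1$, i.e. $x < \frac{\sqrt{2}-1}{\sqrt{2}+1}$, which is exactly the hypothesis. Hence Lemma \ref{lem:squared-log-bound} applies and gives
\begin{align*}
\left(\log\frac{1+x}{1-x}\right)^2 \geq \frac{1}{2}\left(\frac{1+x}{1-x}-1\right)^2 = \frac{1}{2}\left(\frac{2x}{1-x}\right)^2 = \frac{2x^2}{(1-x)^2}.
\end{align*}

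The remaining step is the elementary bound $\frac{2}{(1-x)^2} \geq \frac{1}{2}$, i.e. $(1-x)^2 \leq 4$. Since $x > -1$ we have $1-x < 2$, and since $x < 1$ we have $1-x > 0$, so indeed $0 < (1-x)^2 < 4$; combining with the display above yields $\left(\log\frac{1+x}{1-x}\right)^2 \geq \frac{2x^2}{(1-x)^2} \geq \frac{x^2}{2}$, as claimed (the chain of inequalities is valid for all $x$ in the stated range, including $x=0$ where both sides vanish).

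There is essentially no obstacle here: the content is entirely contained in Lemma \ref{lem:squared-log-bound}, and the only care needed is verifying that the image of the interval $(-1,\frac{\sqrt{2}-1}{\sqrt{2}+1})$ under $x \mapsto \frac{1+x}{1-x}$ lands in $(0,\sqrt{2})$ — which is why the awkward-looking endpoint $\frac{\sqrt{2}-1}{\sqrt{2}+1}$ appears — together with the trivial estimate $(1-x)^2 < 4$ on the same interval.
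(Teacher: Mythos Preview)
Your proof is correct and follows essentially the same route as the paper: both apply Lemma \ref{lem:squared-log-bound} after checking that $(1+x)/(1-x)\in(0,\sqrt{2})$. The only cosmetic difference is that for the remaining inequality $\left(\frac{1+x}{1-x}-1\right)^2\ge x^2$ the paper invokes Lemma \ref{lem:ratiobound}, whereas you compute the left side as $4x^2/(1-x)^2$ and use $(1-x)^2<4$ directly; this is exactly the content of that lemma's lower bound.
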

\begin{proof}
    When $x < (\sqrt{2} - 1)/(\sqrt{2} + 1)$ then $(1+x)/(1-x) < \sqrt{2}$ so we can apply Lemma \ref{lem:squared-log-bound} to get
    \begin{align*}
     \left(\log\frac{1+x}{1-x}\right)^2 \geq \frac{1}{2}\left(\frac{1+x}{1-x} - 1\right)^2. 
    \end{align*}
    As $0<(\sqrt{2} - 1)/(\sqrt{2} + 1) < 1/2$, we can apply Lemma \ref{lem:ratiobound} to get the desired result. 
\end{proof}

\section{Acknowledgment} We would like to thank Prof. Natesh Pillai (Harvard) for the fruitful discussions, for initiating our collaboration and hosting BSz at Harvard University for several research visits. This project has received funding from the European Research Council (ERC) under the European Union’s Horizon 2020 research and innovation programme (grant agreement No
101041064).

\bibliography{q}

\begin{thebibliography}{24}
\providecommand{\natexlab}[1]{#1}
\providecommand{\url}[1]{\texttt{#1}}
\expandafter\ifx\csname urlstyle\endcsname\relax
  \providecommand{\doi}[1]{doi: #1}\else
  \providecommand{\doi}{doi: \begingroup \urlstyle{rm}\Url}\fi

\bibitem[Acharya et~al.(2020{\natexlab{a}})Acharya, Canonne, and
  Tyagi]{pmlr-v125-acharya20b}
Jayadev Acharya, Cl{\'e}ment~L Canonne, and Himanshu Tyagi.
\newblock Distributed signal detection under communication constraints.
\newblock In Jacob Abernethy and Shivani Agarwal, editors, \emph{Proceedings of
  Thirty Third Conference on Learning Theory}, volume 125 of \emph{Proceedings
  of Machine Learning Research}, pages 41--63. PMLR, 09--12 Jul
  2020{\natexlab{a}}.
\newblock URL \url{https://proceedings.mlr.press/v125/acharya20b.html}.

\bibitem[Acharya et~al.(2020{\natexlab{b}})Acharya, Canonne, and
  Tyagi]{9211418}
Jayadev Acharya, Clément~L. Canonne, and Himanshu Tyagi.
\newblock Inference under information constraints i: Lower bounds from
  chi-square contraction.
\newblock \emph{IEEE Transactions on Information Theory}, 66\penalty0
  (12):\penalty0 7835--7855, 2020{\natexlab{b}}.
\newblock \doi{10.1109/TIT.2020.3028440}.

\bibitem[Barnes et~al.(2019)Barnes, Han, and Ozgur]{barnes_lower_2019}
Leighton~Pate Barnes, Yanjun Han, and Ayfer Ozgur.
\newblock Lower {Bounds} for {Learning} {Distributions} under {Communication}
  {Constraints} via {Fisher} {Information}.
\newblock \emph{arXiv:1902.02890 [cs, math, stat]}, May 2019.
\newblock URL \url{http://arxiv.org/abs/1902.02890}.
\newblock arXiv: 1902.02890.

\bibitem[Braverman et~al.(2016)Braverman, Garg, Ma, Nguyen, and
  Woodruff]{braverman_communication_2016}
Mark Braverman, Ankit Garg, Tengyu Ma, Huy~L. Nguyen, and David~P. Woodruff.
\newblock Communication {Lower} {Bounds} for {Statistical} {Estimation}
  {Problems} via a {Distributed} {Data} {Processing} {Inequality}.
\newblock \emph{arXiv:1506.07216 [cs, math, stat]}, May 2016.
\newblock URL \url{http://arxiv.org/abs/1506.07216}.
\newblock arXiv: 1506.07216.

\bibitem[Cai and Wei(2020)]{cai_distributed_2020}
T.~Tony Cai and Hongji Wei.
\newblock Distributed {Gaussian} {Mean} {Estimation} under {Communication}
  {Constraints}: {Optimal} {Rates} and {Communication}-{Efficient}
  {Algorithms}.
\newblock \emph{arXiv:2001.08877 [cs, math, stat]}, January 2020.
\newblock URL \url{http://arxiv.org/abs/2001.08877}.
\newblock arXiv: 2001.08877.

\bibitem[Cai and Wei(2021{\natexlab{a}})]{cai:2021:distributed}
T~Tony Cai and Hongji Wei.
\newblock Distributed nonparametric function estimation: Optimal rate of
  convergence and cost of adaptation.
\newblock \emph{arXiv preprint arXiv:2107.00179}, 2021{\natexlab{a}}.

\bibitem[Cai and Wei(2021{\natexlab{b}})]{cai:distributed:adap:sigma}
T~Tony Cai and Hongji Wei.
\newblock Distributed adaptive gaussian mean estimation with unknown variance:
  interactive protocol helps adaptation.
\newblock \emph{arXiv preprint arXiv:0000.0000}, 2021{\natexlab{b}}.

\bibitem[Cohen et~al.(1993)Cohen, Daubechies, and Vial]{CohenAlbert1993WotI}
Albert Cohen, Ingrid Daubechies, and Pierre Vial.
\newblock Wavelets on the interval and fast wavelet transforms.
\newblock \emph{Applied and computational harmonic analysis}, 1\penalty0
  (1):\penalty0 54--81, 1993.
\newblock ISSN 1063-5203.

\bibitem[Daubechies(1992)]{daubechies:1992}
I.~Daubechies.
\newblock \emph{Ten Lectures on Wavelets}.
\newblock Society for Industrial and Applied Mathematics, 1992.
\newblock \doi{10.1137/1.9781611970104}.
\newblock URL \url{https://epubs.siam.org/doi/abs/10.1137/1.9781611970104}.

\bibitem[Duchi et~al.(2014)Duchi, Jordan, Wainwright, and
  Zhang]{duchi_optimality_2014}
John~C. Duchi, Michael~I. Jordan, Martin~J. Wainwright, and Yuchen Zhang.
\newblock Optimality guarantees for distributed statistical estimation.
\newblock \emph{arXiv:1405.0782 [cs, math, stat]}, June 2014.
\newblock URL \url{http://arxiv.org/abs/1405.0782}.
\newblock arXiv: 1405.0782.

\bibitem[Fan et~al.(2019)Fan, Wang, Wang, and Zhu]{fan2019distributed}
Jianqing Fan, Dong Wang, Kaizheng Wang, and Ziwei Zhu.
\newblock Distributed estimation of principal eigenspaces.
\newblock \emph{Annals of statistics}, 47\penalty0 (6):\penalty0 3009, 2019.

\bibitem[Gin\'e and Nickl(2015)]{Gine}
Evarist Gin\'e and Richard Nickl.
\newblock \emph{Mathematical Foundations of Infinite-Dimensional Statistical
  Models}.
\newblock Cambridge University Press, New York, NY, USA, 1st edition, 2015.
\newblock ISBN 1107043166, 9781107043169.

\bibitem[Han et~al.(2020)Han, Ozgur, and Weissman]{han_geometric_2020}
Yanjun Han, Ayfer Ozgur, and Tsachy Weissman.
\newblock Geometric {Lower} {Bounds} for {Distributed} {Parameter} {Estimation}
  under {Communication} {Constraints}.
\newblock \emph{arXiv:1802.08417 [cs, math, stat]}, September 2020.
\newblock URL \url{http://arxiv.org/abs/1802.08417}.
\newblock arXiv: 1802.08417.

\bibitem[H{\"a}rdle et~al.(2012)H{\"a}rdle, Kerkyacharian, Picard, and
  Tsybakov]{hardle:2012}
W.~H{\"a}rdle, G.~Kerkyacharian, D.~Picard, and A.~Tsybakov.
\newblock \emph{Wavelets, Approximation, and Statistical Applications}.
\newblock Lecture Notes in Statistics. Springer New York, 2012.
\newblock ISBN 9781461222224.
\newblock URL \url{https://books.google.nl/books?id=8yXUBwAAQBAJ}.

\bibitem[Mukherjee and Sen(2018)]{adaptivebinaryreg}
Rajarshi Mukherjee and Subhabrata Sen.
\newblock Optimal adaptive inference in random design binary regression.
\newblock \emph{Bernoulli : official journal of the Bernoulli Society for
  Mathematical Statistics and Probability}, 24\penalty0 (1):\penalty0 699--739,
  2018.
\newblock ISSN 1350-7265.

\bibitem[Shamir(2014)]{shamir_fundamental_2014}
Ohad Shamir.
\newblock Fundamental {Limits} of {Online} and {Distributed} {Algorithms} for
  {Statistical} {Learning} and {Estimation}.
\newblock \emph{arXiv:1311.3494 [cs, stat]}, October 2014.
\newblock URL \url{http://arxiv.org/abs/1311.3494}.
\newblock arXiv: 1311.3494.

\bibitem[Szabo and van Zanten(2020{\natexlab{a}})]{szabo2020distributed}
Botond Szabo and Harry van Zanten.
\newblock Distributed function estimation: adaptation using minimal
  communication.
\newblock \emph{arXiv preprint arXiv:2003.12838}, 2020{\natexlab{a}}.

\bibitem[Szabo and van Zanten(2020{\natexlab{b}})]{szabo_adaptive_2019}
Botond Szabo and Harry van Zanten.
\newblock {Adaptive distributed methods under communication constraints}.
\newblock \emph{The Annals of Statistics}, 48\penalty0 (4):\penalty0 2347 --
  2380, 2020{\natexlab{b}}.
\newblock \doi{10.1214/19-AOS1890}.
\newblock URL \url{https://doi.org/10.1214/19-AOS1890}.

\bibitem[Szabo et~al.(2020)Szabo, Vuursteen, and van
  Zanten]{szabo_optimal_2020}
Botond Szabo, Lasse Vuursteen, and Harry van Zanten.
\newblock Optimal distributed testing in high-dimensional {Gaussian} models.
\newblock \emph{arXiv:2012.04957 [cs, math, stat]}, December 2020.
\newblock URL \url{http://arxiv.org/abs/2012.04957}.
\newblock arXiv: 2012.04957.

\bibitem[Szabo et~al.(2022)Szabo, Vuursteen, and van
  Zanten]{sz:vuursteen:zanten:2022}
Botond Szabo, Lasse Vuursteen, and Harry van Zanten.
\newblock Optimal high-dimensional and nonparametric distributed testing under
  communication constraints.
\newblock \emph{preprint}, 2022.

\bibitem[Wainwright(2019)]{Wainwright}
Martin~J Wainwright.
\newblock \emph{High-Dimensional Statistics: A Non-Asymptotic Viewpoint},
  volume~48 of \emph{Cambridge series in statistical and probabilistic
  mathematics}.
\newblock Cambridge University Press, New York, NY, 2019.
\newblock ISBN 1108498027.

\bibitem[Xu and Raginsky(2016)]{xuraginsky2016}
Aolin Xu and Maxim Raginsky.
\newblock Information-{Theoretic} {Lower} {Bounds} on {Bayes} {Risk} in
  {Decentralized} {Estimation}.
\newblock \emph{arXiv:1607.00550 [cs, math, stat]}, July 2016.
\newblock URL \url{http://arxiv.org/abs/1607.00550}.
\newblock arXiv: 1607.00550.

\bibitem[Zhang et~al.(2013)Zhang, Duchi, Jordan, and
  Wainwright]{zhang2013information}
Yuchen Zhang, John~C Duchi, Michael~I Jordan, and Martin~J Wainwright.
\newblock Information-theoretic lower bounds for distributed statistical
  estimation with communication constraints.
\newblock In \emph{NIPS}, pages 2328--2336. Citeseer, 2013.

\bibitem[Zhu and Lafferty(2018)]{pmlr-v80-zhu18a}
Yuancheng Zhu and John Lafferty.
\newblock Distributed nonparametric regression under communication constraints.
\newblock In Jennifer Dy and Andreas Krause, editors, \emph{Proceedings of the
  35th International Conference on Machine Learning}, volume~80 of
  \emph{Proceedings of Machine Learning Research}, pages 6009--6017. PMLR,
  10--15 Jul 2018.
\newblock URL \url{https://proceedings.mlr.press/v80/zhu18a.html}.

\end{thebibliography}

\section{Supplement}

\subsection{Wavelets and Besov spaces}\label{app: wavelets}
For the convenience of the reader, we include a brief overview of wavelets and Besov spaces, for more details, see \cite{hardle:2012,Gine}. Wavelets represent a particular choice of basis in which we can represent a function of interest. For practitioners, wavelets are appealing because they often give sparse representations of functions, which can be useful for tasks like image compression. 

We follow Daubechies' construction, as in \cite{daubechies:1992}, of the father wavelet $\phi$ and the mother wavelet $\psi$ with $N \in \mathbb{N}$ vanishing moments and bounded support on $[0, 2N - 1]$ and $[-N + 1, N]$, respectively. For a fixed base resolution level $j_0$ (which we will always take to be zero in our paper), we set $\phi_{j_0k}(x) = 2^{j_0}\phi(2^{j_0}x - k)$ for $ k \in \mathbb{Z}$. We then define the child wavelets $\psi_{jk}(x) = 2^{j/2}\psi(2^jx -k)$ for $k \in \mathbb{Z}$ and $j> j_0$. The set  $ \left\{\phi_{j_0  k}:  k \in \mathbb{Z}\right\} \bigcup \left\{\psi_{jk}: j > j_0, k \in \mathbb{Z}\right\}$ then forms an orthonormal multiresolution wavelet basis for $L_2(\mathbb{R})$. To simplify our expressions, we abuse notation slightly and write $\psi_{j_0k} \coloneqq \phi_{j_0k}$. This construction gives wavelets on the whole real line.

In our work we are considering square integrable functions on the unit interval $[0,1]$. The Daubechies'  wavelets can be transformed to an orthonormal wavelet basis over the domain $[0,1]$, called boundary (corrected) wavelets, see for instance \cite{CohenAlbert1993WotI} or Chapter 4.3.5 of \cite{Gine}. 
Let us denote by $\{\psi_{jk}: j \geq j_0, k =0,...,2^j-1\}$ the so constructed orthonormal multiresolution wavelet basis for $L_2([0,1])$, which allows us to compactly express any $f \in L_2([0,1])$ as
\begin{align*}
    f = \sum_{j = j_0}^\infty \sum_{k = 0}^{2^j-1} f_{jk}\psi_{jk},
\end{align*}
where $f_{jk} = \langle f, \psi_{jk}\rangle$ are the wavelet coefficients. The $L_2$ norm of $f$ is then expressed as
\begin{align*}
    \Vert f \Vert_2^2 = \sum_{j=j_0}^\infty\sum_{k=0}^{2^j - 1} f_{jk}^2.
\end{align*}

Wavelets naturally define Besov regularity spaces $B_{pq}^r$ on the unit interval $[0,1]$ equipped with the Besov norm
\begin{align*}
    \Vert f \Vert_{B_{pq}^r} =  \Big(\sum_{j = j_0}^\infty \Big\{ 2^{j\left(r + \frac{1}{2} - \frac{1}{p}\right)}\Big[\sum_{k=0}^{2^j - 1} \vert f_{jk}\vert^p\Big]^{\frac{1}{p}}\Big\}^q\Big)^{\frac{1}{q}}.
\end{align*}
Then Besov balls with radius $L$ are defined as
\begin{align*}
    B_{pq}^r(L) = \left\{f \in L_p([0,1]): \Vert f \Vert_{B_{22}^r} < L\right\}.
\end{align*}
In our analysis we consider $B_{22}^r$ Besov classes (with parameters $p=q=2$). This space is shown to be equivalent with the standard Sobolev regularity class, see for instance \cite{hardle:2012,Gine}.

\subsection{Proof of Lemma \ref{lem: help:Minimax:LB}}

We start by introducing some notations. For a finite set of functions $\mathcal{F}_0$ and semimetric $\rho$ we define the quantities
\begin{align*}
    \ntmax &= \max_{f \in \mathcal{F}_0} \left\vert \{\tilde{f} \in \mathcal{F}_0: \rho(f, \tilde{f}) \leq t\} \right\vert, \\
    \ntmin &= \min_{f \in \mathcal{F}_0} \left\vert \{\tilde{f} \in \mathcal{F}_0: \rho(f,\tilde{f}) \leq t\}  \right\vert.
\end{align*}
This allows us to state a version of Fano's lemma given in Corollary 1 of \cite{duchi_optimality_2014}.

\begin{lem}[Corollary 1 of \cite{duchi_optimality_2014}] \label{thm:fanos}
If the semimetric space $(\mathcal{F},\rho)$ contains a finite set $\mathcal{F}_0$ and $|\mathcal{F}_0| - \ntmin > \ntmax$, then for all $p, t > 0,$
\begin{align*}
    \inf_{\hat{f} \in \mathcal{E}(Y)}\sup_{f \in \mathcal{F}} \mathbb{E}_f \rho(\hat{f}, f)^p &\geq t^p\left( 1 - \frac{I(F;Y) + \log{2}}{\log(|\mathcal{F}_0|/\ntmax)}\right),
\end{align*}
where $\mathcal{E}(Y)$ denotes the set of measurable functions of $Y$, $I(F;Y)$ is the mutual information between the uniform random variable $F$ (on $\mathcal{F}_0$) and $Y$, in the Markov chain $F \to X \to Y$, and $\mathbb{E}_f$ is the expectation with respect to the distribution of $Y$ given $F = f$. 
\end{lem}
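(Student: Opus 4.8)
The plan is to obtain the inequality from the \emph{list-decoding} form of Fano's lemma, via the standard reduction of a minimax risk to a Bayes risk over the finite sieve $\mathcal{F}_0$. Put a uniform prior $F$ on $\mathcal{F}_0$. Since $\mathcal{F}_0\subseteq\mathcal{F}$, for every $\hat f\in\mathcal{E}(Y)$ one has $\sup_{f\in\mathcal{F}}\mathbb{E}_f\rho(\hat f(Y),f)^p\ge|\mathcal{F}_0|^{-1}\sum_{f\in\mathcal{F}_0}\mathbb{E}_f\rho(\hat f(Y),f)^p=\mathbb{E}_{F,Y}\big[\rho(\hat f(Y),F)^p\big]$, and by Markov's inequality this is at least $t^p\,\mathbb{P}\big(\rho(\hat f(Y),F)>t\big)=t^p\big(1-\mathbb{P}(\rho(\hat f(Y),F)\le t)\big)$. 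So it is enough to show, uniformly over $\hat f$, that $\mathbb{P}(\rho(\hat f(Y),F)\le t)\le\big(I(F;Y)+\log 2\big)/\log(|\mathcal{F}_0|/\ntmax)$.

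The second step turns $\hat f$ into a \emph{list decoder}: I would produce a data-dependent set $\hat L(Y)\subseteq\mathcal{F}_0$ with $|\hat L(Y)|\le\ntmax$ and such that $\{\rho(\hat f(Y),F)\le t\}\subseteq\{F\in\hat L(Y)\}$ — concretely, $\hat L(Y)$ is the $t$-ball, intersected with $\mathcal{F}_0$, around a nearest point $\hat V(Y)\in\mathcal{F}_0$ to $\hat f(Y)$. The hypothesis $|\mathcal{F}_0|-\ntmin>\ntmax$ enters precisely in making this decoder well posed: it forces $\ntmax<|\mathcal{F}_0|$ (so $\log(|\mathcal{F}_0|/\ntmax)>0$ and the bound is not vacuous) and keeps the residual set $\mathcal{F}_0\setminus\hat L(Y)$ non-empty for the entropy accounting. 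Granting such an $\hat L$, set $E=\mathbbm{1}\{F\notin\hat L(Y)\}$; using $H(F)=\log|\mathcal{F}_0|$, the chain rule, $H(E\mid Y)\le\log 2$, $H(F\mid E=0,Y)\le\log\ntmax$ and $H(F\mid E=1,Y)\le\log|\mathcal{F}_0|$, one gets
\[
\log|\mathcal{F}_0|-I(F;Y)=H(F\mid Y)\le\log 2+\mathbb{P}(E)\log|\mathcal{F}_0|+\big(1-\mathbb{P}(E)\big)\log\ntmax,
\]
which rearranges to $\mathbb{P}(E)\ge 1-\big(I(F;Y)+\log 2\big)/\log(|\mathcal{F}_0|/\ntmax)$, i.e.\ $\mathbb{P}(\rho(\hat f(Y),F)\le t)\le\mathbb{P}(F\in\hat L(Y))=1-\mathbb{P}(E)\le\big(I(F;Y)+\log 2\big)/\log(|\mathcal{F}_0|/\ntmax)$.

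Combining the two steps would give $\sup_{f\in\mathcal{F}}\mathbb{E}_f\rho(\hat f,f)^p\ge t^p\big(1-(I(F;Y)+\log 2)/\log(|\mathcal{F}_0|/\ntmax)\big)$ for every $\hat f\in\mathcal{E}(Y)$, and taking the infimum over $\hat f$ finishes the argument. I expect the only genuinely delicate point to be the geometric content of the list-decoder construction: a crude nearest-neighbour projection of an estimate lying outside $\mathcal{F}_0$ only guarantees that $F$ lies within $2t$ of $\hat V(Y)$, so the list would a priori be controlled by a radius-$2t$ neighbourhood count rather than by $\ntmax$. This is reconciled by working with estimators discretized to $\mathcal{F}_0$ (legitimate after a rounding step) or by the sharper decoder of \cite{duchi_optimality_2014}, and it is exactly the assumption $|\mathcal{F}_0|-\ntmin>\ntmax$ that keeps this reduction — and the entropy bookkeeping above — consistent.
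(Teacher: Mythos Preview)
The paper does not supply its own proof of this lemma: it is merely quoted as Corollary~1 of \cite{duchi_optimality_2014} and then invoked as a black box in the proof of Lemma~\ref{lem: help:Minimax:LB}. There is therefore no argument in the paper to compare your proposal against.

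For what it is worth, your sketch is exactly the list-decoding Fano route used in the cited reference: reduce the minimax risk to a Bayes risk under the uniform prior on $\mathcal{F}_0$, pull out the factor $t^p$ via Markov's inequality, build a list decoder $\hat L(Y)\subseteq\mathcal{F}_0$ of cardinality at most $\ntmax$ that contains $F$ on the event $\{\rho(\hat f(Y),F)\le t\}$, and finish with the entropy chain-rule computation bounding $H(F\mid Y)$. The geometric caveat you flag at the end --- that a nearest-neighbour projection of $\hat f(Y)$ to $\mathcal{F}_0$ only guarantees $F$ lies in a $2t$-ball around the projected point, so the list size is a priori controlled by $N_{2t}^{\max}$ rather than $\ntmax$ --- is a genuine wrinkle. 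The clean way around it (and the way the cited reference proceeds) is to take $\hat L(Y)=\{f\in\mathcal{F}_0:\rho(\hat f(Y),f)\le t\}$ directly, without any projection, and to read $\ntmax$ as the maximal cardinality of such a set over all possible centres; with the paper's definition of $\ntmax$ as a maximum over $f\in\mathcal{F}_0$ only, this step is slightly loose in full generality, though in the paper's application the sieve has Hamming-type geometry and the distinction is immaterial.
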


The distance between two functions in the sieve $\mathcal{F}_0(d,C_0, \epsilon)$ is
\begin{align*}
    \Vert \fbb - \fbbp \Vert_2^2 &= \epsilon^2 d^{-1-2r}\sum_{h=1}^d I(\beta_h \neq \beta_h').
\end{align*}
This means that for $\tilde{t} = \lfloor t^2 d^{1+2r}/\epsilon^{2} \rfloor$ we have for $\rho(f,g) = \Vert f-g \Vert_2$ that
\begin{align*}
    \ntmax = \ntmin = \sum_{i = 0}^{ \tilde{t}} \binom{d}{i}.
\end{align*}
Note that if $\tilde{t} < d/2$, then $\ntmax = \ntmin < 2^d$. We set $t^2 = \epsilon^2d^{-2r}/6$. This gives $\tilde{t} \leq d/6$, and  thus $\ntmax = \ntmin < |\mathcal{F}_0(d,C_0, \epsilon)|/2$ and $\ntmax < |\mathcal{F}_0(d,C_0, \epsilon)| - \ntmin$. Hence by applying Lemma \ref{thm:fanos} with $p=2$, $t^2 = \epsilon^2d^{-2r}/6$  we obtain
\begin{align*}
      \inf_{\hat{f} \in \mathcal{F}_{\text{dist}}(\boldsymbol{B})} \sup_{f \in B_{2,2}^r(L) } \mathbb{E}_{f,T}\Vert \hat{f} - f\Vert_2^2 &\gtrsim d^{-2r}\left(1 - \frac{I(F;Y) + \log{2}}{\log(|\mathcal{F}_0(d,C_0, \epsilon)|/\ntmax)}\right).
\end{align*}
Since $d^{-2r}\asymp \Nbass^{-\frac{2r}{1+2r}}$ it is sufficient to prove that the term in the bracket is bounded away from zero.

Noting, that with $\tilde{t} = \lfloor d/6 \rfloor$,
\begin{align*}
    \ntmax = \sum_{i=0}^{\tilde{t}} \binom{d}{i} < 2\binom{d}{\tilde{t}} \leq 2(ed/\tilde{t})^{\tilde{t}} \leq 2(ed/(d/7))^{d/6} = 2(7e)^{d/6}
\end{align*}
we get
\begin{align*}
    \log(|\mathcal{F}_0(d,C_0, \epsilon)|/\ntmax) \geq \log\frac{2^d}{2(7e)^{d/6}} = d\log(2(7e)^{-1/6}2^{-1/d}) > d/6
\end{align*}
for $d \geq 20$

\subsection{Algorithm for the upper bound}

Algorithm \ref{alg:block-est-alg} provides the schematic representation of the distributed method given in Theorem \ref{thm:gen_upper_bound_thm} achieving the theoretical lower bounds (up to a logarithmic factor).

\begin{algorithm}
    \caption{Estimation procedure for block estimators}
    \label{alg:block-est-alg}
    \begin{algorithmic}[1] 
        \Procedure{BlockEstimator}{$B, n, m, \hat{f}_{jh}$} 
            \State $\kappa \gets \kappa(B, n, m)$ \Comment{Calculate $\kappa$ using equation \eqref{eq:optimal-kappa}}
            \For{$\ell = 1$ to $m/\kappa$}
                \For{$k = \lfloor (\ell - 1)\kappa\rfloor + 1$ to $\lfloor \ell\kappa\rfloor$}
                    \For{$2^j + h = (\ell - 1)\lfloor B/\log_2{N}\rfloor$ to $\ell\lfloor B/\log_2{N}\rfloor$}
                        \State{\textbf{On machine $k$:}}
                        \State{$\hat{f}_{n,jh}^{(k)} \gets \hat{f}_{jh}(X^{(k)}, T^{(k)})$} \Comment{Estimate coefficient}
                        \If{$\vert\hat{f}_{n,jh}^{(k)}\vert \leq  \sqrt{N}$}
                        \State{$Y_{jh}^{(k)} \gets$ \textsc{TransApprox}$(\hat{f}_{n,jh}^{(k)})$}
                        \Else
                        \State{$Y_{jh}^{(k)} \gets 0$}
                        \EndIf
                        \State{$U_{jh} \gets U_{jh} \cup \{k\}$} \Comment{Indicies of machines used to estimate $f_{jh}$}
                    \EndFor
                \EndFor
            \EndFor
            \State \textbf{On the central machine}
            \For{$2^j + h = 1$ to $m/\kappa\lfloor B/\log_2{N}\rfloor$}
            
                \State{$\hat{f}_{jh} = \kappa^{-1}\sum_{k \in U_{jh}} Y_{jh}^{(k)}$}

            \EndFor
            \State{\textbf{return:} $\hat{f} = \sum_{2^j + h: U_{jh} \neq \emptyset}\hat{f}_{jh}\psi_{jk}$}
        \EndProcedure
    \end{algorithmic}
\end{algorithm}

\subsubsection{Binary approximation and communication constraints}
 In order to construct estimators, we need to actually specify how the communication budget will be used. To do this, we translate the local estimates $\hat{f}_{n,jh}^{(k)}$ into a binary string $Y_{jh}^{(k)}$ that can be transmitted. Given any value $x \in \mathbb{R}$, we can represent $x$ in a scientific binary representation: $|x| =  \sum_{k=-\infty}^{\log_2|x|} b_k2^k$ for $b_k \in \{0,1\}$. We can approximate $x$ using $y$ by taking the same binary expansion up to the $(D\log_2{N})$th binary digit, which gives $|y| = \sum_{k=-D\log_2{N}+1}^{\log_2{|x|}} b_k2^k$. This process is described in Algorithm \ref{alg:transapprox}, which is Algorithm 1 from \cite{szabo_adaptive_2019}. 
\begin{algorithm}
    \caption{Transmitting a finite-bit approximation of a number}
    \label{alg:transapprox}
    \begin{algorithmic}[1] 
        \Procedure{TransApprox}{$x$} 
            \State Transmit: $\text{sign}(x), b_{-D\lfloor\log_2{N}\rfloor},\ldots b_{\lfloor\log_2|x|\rfloor}$
            \State Construct: $y = (2\text{sign}(x) - 1)\sum_{k=-D\log_2{N}}^{\log_2|x|} b_k2^k$
        \EndProcedure
    \end{algorithmic}
\end{algorithm}
The length of the binary string approximation given by Algorithm \ref{alg:transapprox} for $|x|\leq \sqrt{N}$ is $ \log_2|x|+D\log_2 n\leq (D+1/2)\log_2 N$. Taking $D=1/2$, we can approximate our estimators with error bounded by $N^{-1/2}$ using $\log_2{N}$ bits. Therefore one machine can transmit $B/\log_2{N}$ binary approximations to coefficient estimates with each having error bounded by $N^{-1/2}$ and satisfy the communication constraint of $B$ bits in expectation. 

The estimation algorithm for block estimators is shown in Algorithm \ref{alg:block-est-alg}. The algorithm takes as input the communication budget $B$, the local sample size $n$, the number of machines $m$, and local estimators $\hat{f}_{n,jh}^{(k)}$ for all $j$, $h$ and $k$. By estimating wavelet coefficients, the algorithm is able to construct an estimator $\hat{f}$ for the unknown function. 

In the algorithm, we transmit zero if the estimate $\vert\hat{f}_{n,jh}^{(k)}\vert > \sqrt{N}$. This is necessary to construct estimators that satisfy the communication constraint almost surely, rather than in expectation. In practice, unless one needed to satisfy a communication constraint with probability one, there would be no need to truncate to zero.

\end{document}